\theoremstyle{definition}
\newtheorem{thm}{Theorem}[section]
\newtheorem{example}[thm]{Example}
\newtheorem{defn}[thm]{Definition}
\newtheorem{lemma}[thm]{Lemma}
\newtheorem{cor}[thm]{Corollary}
\newtheorem{rmk}[thm]{Remark}
\newtheorem{mainthm}{Theorem}
\newcommand{\uline}[1]{\underline{#1}}
\newcommand{\fbp}[1]{\left[#1 \right]}
\def\Hom{\operatorname{Hom}}
\def\fm{\mathfrak{m}}
\def\fp{\mathfrak{p}}
\newcommand{\hsl}{\operatorname{HSL}}
\newcommand{\mbn}{\mathbf{N}}
\newcommand{\mbz}{\mathbf{Z}}
\newcommand{\ann}{\operatorname{Ann}}
\newcommand{\mfm}{\mathfrak{m}}
\newcommand{\mfn}{\mathfrak{n}}
\newcommand{\mfb}{\mathfrak{b}}
\newcommand{\mfp}{\mathfrak{p}}
\newcommand{\mfq}{\mathfrak{q}}
\newcommand{\mcw}{\mathcal{W}}
\newcommand{\on}[1]{\operatorname{#1}}
\newcommand{\soc}{\on{Soc}}
\newcommand{\im}{\operatorname{im}}
\newcommand{\depth}{\operatorname{depth}}
\newcommand{\gdp}{\operatorname{g-depth}}
\newcommand{\fdp}{F\textrm{-depth}}
\newcommand{\gfdp}{\textrm{g}F\textrm{-depth}}
\newcommand{\spec}{\operatorname{Spec}}
\def\NN{\mathbf{N}}
\def\ZZ{\mathbf{Z}}
\def\cG{\mathcal{G}}
\def\cR{\mathcal{R}}
\def\ux{\underline{x}}
\newcommand{\Arg}{\rule{1ex}{1pt}}
\newcommand{\sbt}{\,\begin{picture}(-1,1)(-1,-2)\circle*{2}\end{picture}\ }
\colorlet{DG}{green!50!black}
\colorlet{DB}{red!50!black}
\def\LEM[#1]{\footnote{ {\color{DG} LEM: #1  }  }}
\def\KLM[#1]{\footnote{ {\color{purple} KLM: #1  }  }}
\def\AC[#1]{\footnote{ {\color{DB} AC: #1  }  }}
\tikzset{
    labl/.style={anchor=south, rotate=270, inner sep=.5mm}
}
\title{Rees algebras and generalized depth-like conditions in prime characteristic}
\author{Alessandra Costantini, Kyle Maddox, and Lance Edward Miller}
\date{August 19, 2023}
\begin{document}

\maketitle

\begin{abstract}
In this article we address a question concerning nilpotent Frobenius actions on Rees algebras and associated graded rings. We prove a nilpotent analog of a theorem of Huneke for Cohen-Macaulay singularities. This is achieved by introducing a depth-like invariant which captures as special cases Lyubeznik's F-depth and the generalized F-depth from Maddox-Miller and is related to the generalized depth with respect to an ideal. We also describe several properties of this new invariant and identify a class of regular elements for which weak F-nilpotence deforms.
\end{abstract}

\section{Introduction}

This article is a continuation of the study of singularities of local rings in positive characteristic via nilpotent Frobenius actions on local cohomology. Introduced in \cite{BB05}, the classes of $F$-nilpotent singularities and its variants have gathered significant recent attention  \cite{KMPS,Mad19,MM,MP22,PQ19,Quy19,ST17}. Critical tools for studying these singularities are Lyubeznik's $F$-depth, which tracks the smallest cohomological index of a non-nilpotent local cohomology module of a local ring, and variations of this invariant as introduced in \cite{MM}. We show in this article that $F$-depth, and indeed many other depth-like invariants, can be interpreted as different instances of a common generalized invariant. This is both predicted by, and strengthens, growing analogies between rings with nilpotent type singularities and those with generalized Cohen-Macaulay singularities.

A local ring $(R,\fm)$ is called {\it weakly $F$-nilpotent} provided that all lower local cohomology modules $H_\fm^i(R)$ are nilpotent under their canonical Frobenius action. These can be seen as analogous to Cohen-Macaulay singularities, replacing the vanishing condition defining Cohen-Macaulayness with nilpotence. Likewise, {\it generalized weakly $F$-nilpotent singularities} were identified in \cite{Mad19} as analogs of generalized Cohen-Macaulay singularities. All of these singularities can be equivalently described as rings for which natural depth-like invariants are as large as possible \cite{MM}. Additionally, building on the framework introduced in \cite{MM}, these notions are considered naturally in graded settings as well.

With an eye towards future applications, we draw inspiration from the rich study of the Cohen-Macaulay property of Rees algebras and associated graded rings, where similar invariants were considered \cite{HuGrI,TI89,HM94}. As such, for an ideal $J\subset R$ we introduce a new invariant, the {\bf generalized $F$-depth with respect to an ideal $J$} \[ \gfdp_J(R) := \inf \{j \in \mbn \mid (J^N H^j_\mfm(R),\rho) \text{ is not nilpotent for any } N\in \mbn \}.\]

In various specializations, this invariant recovers Lyubeznik's $F$-depth \cite{Lyu06} and the generalized $F$-depth introduced in \cite{MM}. Also, as zero modules are obviously nilpotent, it generalizes a similar invariant introduced in \cite{HM94} in the context of generalized Cohen-Macaulay rings. In this paper, we embark on a detailed study of the foundational properties of this new invariant and discuss various applications. 
Perhaps the most tantalizing is an analog of a theorem of Huneke \cite[Prop.~1.1]{HuGrI} which states that, for an ideal $I$ of positive height in a Cohen-Macaulay ring, the Cohen-Macaulay property transfers from the Rees algebra $\cR$ to the associated graded ring $\cG$ of $I$. More generally, we prove the following main theorem.

\begin{mainthm}[{\rm see \Cref{thm: bounding gfdp_J of G}}]
Let $S$ be a local ring of dimension $d\ge 1$ and fix $I\subset S$ an ideal of positive grade. Set $\cR = S[It]$ and $\cG = \bigoplus_{i \ge 0} I^i/I^{i+1}$ the associated graded ring of $I$. Let $J_0\subset S$ be an ideal. For $J$ a homogeneous ideal of $\cR$ such that $[J]_0=J_0$, $$\gfdp_{J\cG}(\cG) \ge \min \{\gfdp_{J_0} (S) ,\gfdp_J (\cR) -1\}.$$
In particular, if $S$ and $\cR$ are (generalized) weakly $F$-nilpotent, so is $\cG$.
\end{mainthm}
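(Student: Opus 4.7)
The natural approach is to emulate Huneke's classical Cohen-Macaulay transfer argument, now replacing vanishing of local cohomology with $J$-power nilpotence of the Frobenius action. Consider the two $F$-equivariant short exact sequences of graded $\cR$-modules
\begin{align*}
& 0 \to \cR_+ \to \cR \to S \to 0 \qquad (\star) \\
& 0 \to I\cR \to \cR \to \cG \to 0 \qquad (\star\star)
\end{align*}
Let $\fm$ denote the homogeneous maximal ideal of $\cR$, so local cohomology at $\fm$ of $S=\cR/\cR_+$ and $\cG=\cR/I\cR$ agrees with local cohomology at the respective maximal ideals. The graded identification $\cR_+(1)\cong I\cR$ via multiplication by $t^{-1}$ in $S[t,t^{-1}]$ will be used to transfer $F$-module information between the two kernels; although this involves a grading shift, $J^N$-nilpotence of the Frobenius action on local cohomology modules is unaffected.

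Fix $j < \min\{\gfdp_{J_0}(S),\gfdp_J(\cR)-1\}$. From $(\star\star)$ the long exact sequence produces a short exact sequence of Frobenius $\cR$-modules
$$0 \to K_j \to H^j_\fm(\cG) \to Q_j \to 0,$$
in which $K_j$ is a quotient of $H^j_\fm(\cR)$ and $Q_j$ is a submodule of $H^{j+1}_\fm(I\cR)\cong H^{j+1}_\fm(\cR_+)$. From $(\star)$, $H^{j+1}_\fm(\cR_+)$ fits in a short exact sequence with a quotient of $H^j_{\fm_S}(S)$ on one side and a submodule of $H^{j+1}_\fm(\cR)$ on the other. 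Since the $\cR$-action on $H^j_{\fm_S}(S)$ factors through $\cR\twoheadrightarrow S$ — mapping $J$ to $J_0$ — the hypothesis $j<\gfdp_{J_0}(S)$ gives $N,e$ with $F^e(J^N H^j_{\fm_S}(S))=0$, and $j<\gfdp_J(\cR)-1$ gives analogous nilpotence on both $H^j_\fm(\cR)$ and $H^{j+1}_\fm(\cR)$.

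The conclusion then follows by two applications of the two-out-of-three property for $J$-nilpotence in short exact sequences of Frobenius modules — which I would expect to be developed as a foundational lemma earlier in the paper: first to the SES for $H^{j+1}_\fm(\cR_+)$, extracting nilpotence of some $J^N Q_j$; then to the SES for $H^j_\fm(\cG)$, combined with nilpotence of $J^N K_j$. This yields $N$ for which $J^N H^j_\fm(\cG)$ is nilpotent, i.e., $j<\gfdp_{J\cG}(\cG)$. The ``in particular'' for (generalized) weak $F$-nilpotence follows by specializing $J$ and $J_0$ to the appropriate maximal ideals. The main technical obstacle is this two-out-of-three lemma: lifting $J$-nilpotence from the end terms of a SES $0\to A\to B\to C\to 0$ to the middle requires an Artin-Rees-type control of $J^N B\cap A$ inside $J^{N'}A$, exploiting the Artinian structure of local cohomology and the compatibility of Frobenius with ideal-powers. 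The ``$-1$'' in $\gfdp_J(\cR)-1$ tracks the degree shift induced by the connecting map $H^j_\fm(\cG)\to H^{j+1}_\fm(I\cR)$, and a secondary subtlety to check is that the grading shift in $\cR_+(1)\cong I\cR$ preserves $J^N$-nilpotence of the Frobenius action at the level of local cohomology.
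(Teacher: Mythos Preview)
Your overall architecture matches the paper's proof: the same two short exact sequences, the same $F$-depth-lemma-style inequalities, and the same identification $\cR_+(1)\cong I\cR$ linking the two kernels. However, you have inverted the relative difficulty of your two technical obstacles.

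The ``two-out-of-three'' lemma does \emph{not} require any Artin--Rees control. The paper proves it (\Cref{thm: F-depth lemma}) by first reformulating ``$J^N M$ is nilpotent'' as ``for all $x\in J^N$, the $p^e$-linear map $x^{p^e}\rho^e\colon M\to M$ vanishes for $e\gg 0$'' (\Cref{lem: equivalent conditions for J^NM to be nilpotent}). With this reformulation one simply chases the commutative ladder: if $x^{p^e}\rho^e_A$ and $x^{p^e}\rho^e_C$ both vanish, then the image of $x^{p^e}\rho^e_B$ lies in $\im\alpha$, and a second application of $x^{p^e}\rho^e_B$ kills that image, so $x^{p^{2e}}\rho^{2e}_B=0$. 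No comparison of $J^{N}B\cap A$ with $J^{N'}A$ is ever needed; your worry about Artin--Rees arises only if you insist on tracking $\rho^e(J^N B)$ directly rather than the single operator $x^{p^e}\rho^e$.

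Conversely, the ``secondary subtlety'' you flag is in fact the main technical point of the section. The isomorphism $\theta\colon I\cR\to It$, $at^n\mapsto at^{n+1}$, is an isomorphism of graded $\cR$-modules but \emph{not} of $\cR[F]$-modules: $\theta\sigma\theta^{-1}(at^{n+1})=a^p t^{np+1}$ whereas $\rho(at^{n+1})=a^p t^{np+p}$. So it is not at all automatic that $\gfdp_J(I\cR)=\gfdp_J(It)$. The paper devotes \Cref{lem: IR and It have same gfdp_J} to this, introducing auxiliary maps $\eta_e\colon It\to I\cR$ (with nontrivial kernel in low degrees) that fit into a commuting square with $\theta$, $\rho^e$, and $\sigma^e$, and then arguing via the graded Nagel--Schenzel isomorphism and explicit \v{C}ech-class computations that $x\rho^e$ vanishes on $H^j_\mfm(It)$ if and only if $x\sigma^e$ vanishes on $H^j_\mfm(I\cR)$. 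Your plan is sound once this lemma is in place, but you should be aware that establishing it is where the genuine work lies.
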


This can be seen as a complementary result to other work examining $F$-singularities for Rees algebras, specifically $F$-rational singularities \cite{HWY02a,KK}, $F$-regular singularities \cite{HWY02b},  or $F$-pure singularities \cite{dSMNB}.

The literature on Cohen-Macaulay singularities \cite{TI89,HM94} suggests that a deeper understanding of generalized $F$-depth is required in order to conversely transfer nilpotent-type singularities from the associated graded ring to the Rees algebra.  With this future program in mind, an important finding is the following local description of generalized $F$-depth, which we view as an analog to \cite[Prop.~2.1]{HM94}.

\begin{mainthm}[{\rm see \Cref{thm: can check gfdp_J(R) locally on D(J)}}]
Suppose $R$ is complete, $F$-finite, and equidimensional. For ideal $J\subset R$ \[
\gfdp_J (R) = \inf\{\fdp (R_\mfp) + \dim(R/\mfp) \mid J\not \subset \mfp\}.
\] In particular, $R$ is generalized weakly $F$-nilpotent with respect to $J$ if and only if the Zariski open set $D(J)=\{ \mfp \in \spec R \mid J\not\subset \mfp\}$ is inside the weakly $F$-nilpotent locus of $R$.
\end{mainthm}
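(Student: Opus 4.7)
My plan is to reinterpret the defining nilpotence condition of $\gfdp_J(R)$ as a support condition on a finitely generated $R$-module via Matlis duality, and then to use the compatibility of the dualizing complex with localization to identify that support with the set appearing in the formula.

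First I would reformulate the invariant. Since $H^i_\mfm(R)$ is Artinian, the ascending chain $\ker(F^e|_{H^i_\mfm(R)})$ stabilizes to a submodule $\mathcal{N}_i$, and for the quotient $M_i := H^i_\mfm(R)/\mathcal{N}_i$ one has that $J^N H^i_\mfm(R)$ is Frobenius-nilpotent for some $N \in \mbn$ if and only if $J^N M_i = 0$ for some $N$, equivalently $J \subseteq \sqrt{\ann_R(M_i)}$. Consequently $\gfdp_J(R) = \inf\{i \mid V(\ann_R(M_i)) \cap D(J) \neq \emptyset\}$.

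Next I would transport the problem to finitely generated modules via duality. Under the hypotheses that $R$ is complete, $F$-finite, and equidimensional, a normalized dualizing complex $\omega_R^\bullet$ exists and Matlis duality yields a finitely generated module $E_i := H^i_\mfm(R)^\vee$ equipped with a Cartier-like map $\kappa \colon F_* E_i \to E_i$ dual to the Frobenius. The submodule $M_i^\vee$ of $E_i$ then identifies with the stable image $\bigcap_e \im(\kappa^e)$, which stabilizes by Hartshorne--Speiser--Lyubeznik--Gabber. Since Matlis duality preserves annihilators over complete local rings, $V(\ann_R(M_i)) = \on{Supp}_R(M_i^\vee)$.

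The crux of the argument is the localization step. For $\mfp \in \spec R$, equidimensionality forces $(\omega_R^\bullet)_\mfp$ to be a normalized dualizing complex for $R_\mfp$ shifted by $\dim(R/\mfp)$; consequently the localized data $((E_i)_\mfp, \kappa_\mfp)$ is the Matlis--Cartier dual of $H^{i-\dim(R/\mfp)}_{\mfp R_\mfp}(R_\mfp)$ with its canonical Frobenius. Because the chain defining the stable image stabilizes, localization commutes with $\bigcap_e \im(\kappa^e)$, so $\mfp \in \on{Supp}(M_i^\vee)$ exactly when $H^{i-\dim(R/\mfp)}_{\mfp R_\mfp}(R_\mfp)$ fails to be Frobenius-nilpotent, i.e., when $\fdp(R_\mfp) \leq i - \dim(R/\mfp)$. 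Taking the infimum over $\mfp \in D(J)$ delivers the formula. The ``in particular'' statement is then immediate: by equidimensionality $\dim R_\mfp + \dim(R/\mfp) = d$, so $\gfdp_J(R) = d$ is equivalent to $\fdp(R_\mfp) = \dim R_\mfp$ for every $\mfp \in D(J)$. The main obstacle I expect is the compatibility verification in this localization step, especially matching the Matlis-dual Cartier operator with the correct cohomological shift coming from localization of the dualizing complex.
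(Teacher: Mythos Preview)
Your proposal is correct and follows essentially the same route as the paper: the paper's argument also passes through Matlis duality and localization, packaging your ``crux'' step as a separate lemma establishing $\mfb_j(R)R_\mfp = \mfb_{j-d+\on{ht}(\mfp)}(R_\mfp)$ via the composite functor $\mcw_\mfp(\bullet) = (\bullet^\vee \otimes_R R_\mfp)^{\vee_\mfp}$, with the Cartier-module compatibility you flag as the main obstacle handled by citing \cite{BB11} and \cite{KMPS}. Your explicit formulation in terms of the stable image $\bigcap_e \im(\kappa^e)$ and the dualizing-complex shift is precisely what those references supply, so the two arguments differ only in how much of the Blickle--B\"ockle machinery is unpacked on the page.
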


An obstacle that makes the theory of $F$-depth and its generalizations more difficult to study than ordinary depth is that, for a regular element $x \in R$, we need not have $\fdp (R/xR) = \fdp (R) -1$; in fact, we exhibit explicit examples of rings where this equality fails for all $x\in R$. To continue to extend results known for Cohen-Macaulayness in the nilpotent setting, we address this obstacle by introducing and studying a class of regular elements called {\bf $F$-depth elements} for which the equality above is satisfied (see \cref{defn: Fdepth elements}). In the presence of such elements (e.g., when $R$ is a weakly $F$-nilpotent local ring, \Cref{rmk: every reg elt is F-depth for wFn}), classical results known for Cohen-Macaulay singularities can be proven for nilpotent-type singularities as well; in particular, compare the following result with \cite[Lem.~2.5]{HM94}. 

\begin{mainthm}({\rm see \Cref{thm: generalized F-depth at irrelevant ideal}})
If $H^j_\mfm(R)$ is nilpotent in all negative degrees for $0 \le j < t$, then $\gfdp_{R^+} (R) \ge t$. The converse holds if $R$ admits a homogeneous $F$-depth element on $R$ of positive degree. 
\end{mainthm}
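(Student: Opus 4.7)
For the forward direction, I would exploit the standard graded vanishing: when $R$ is graded and finitely generated over its (Noetherian local) zeroth piece, each graded piece $H^j_\mfm(R)_n$ vanishes for $n\gg 0$; fix such a bound $M_j$ for every $j<t$. Setting $N=M_j$, consider a homogeneous summand $r\eta$ of an element of $(R^+)^N H^j_\mfm(R)$, with $\deg(r)\ge N$ and $\eta$ homogeneous. If $\deg(\eta)\ge 0$ then $r\eta$ sits in degree $\ge M_j$, hence is zero; if $\deg(\eta)<0$ then the hypothesis provides a uniform $e$ with $F^e(\eta)=0$, and Frobenius semi-linearity $F^e(r\eta)=r^{p^e}F^e(\eta)=0$ shows the same $e$ annihilates $r\eta$. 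Hence $(R^+)^N H^j_\mfm(R)$ is uniformly Frobenius-nilpotent for every $j<t$, giving $\gfdp_{R^+}(R)\ge t$.

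For the converse, I plan to induct on $t$, using the short exact sequence
$$0 \to R(-d) \xrightarrow{x} R \to R/xR \to 0$$
attached to the homogeneous $F$-depth element $x$ of positive degree $d$. The base case $t=0$ is vacuous. For the inductive step, the defining property of an $F$-depth element (cf.\ \cref{defn: Fdepth elements}) should yield the expected drop $\gfdp_{(R/xR)^+}(R/xR)\ge t-1$; the inductive hypothesis then tells us that $H^j_\mfm(R/xR)$ is nilpotent in all negative degrees for $j<t-1$.

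To propagate this back to $R$, fix $j<t$ and $\eta\in H^j_\mfm(R)_n$ with $n<0$. Choose $N$ and $e$ so that $F^e$ annihilates $(R^+)^N H^j_\mfm(R)$ and take $k$ large enough that $x^k\in(R^+)^N$; then $F^e(x^k\eta)=0$ rewrites as $x^{kp^e}F^e(\eta)=0$, placing $F^e(\eta)$ in the $x$-torsion of $H^j_\mfm(R)$. The long exact sequence
$$H^{j-1}_\mfm(R/xR)\to H^j_\mfm(R)(-d)\xrightarrow{x} H^j_\mfm(R)\to H^j_\mfm(R/xR)$$
identifies this $x$-torsion with the image of $H^{j-1}_\mfm(R/xR)$ after a shift by $d$; since $np^e<0$ and $d>0$, the relevant graded piece remains in negative degree, where the inductive hypothesis renders the target nilpotent. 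A further Frobenius power then kills $F^e(\eta)$, and so $\eta$ itself.

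The main obstacle is extracting from \cref{defn: Fdepth elements} exactly the two properties we have quietly used: that modding out by $x$ drops $\gfdp_{R^+}$ by one, and that the $x$-torsion submodules of each $H^j_\mfm(R)$ inherit Frobenius nilpotence in sufficiently negative degrees via the long exact sequence. HSL-type uniformity of Frobenius nilpotence on the graded Artinian modules $H^j_\mfm(R)$ should make both available, but the careful bookkeeping of graded shifts, $F$-compatibility, and iterated application of the $F$-depth element condition is where the proof will spend its effort.
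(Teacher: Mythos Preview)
Your forward direction is correct and essentially matches the paper's, which observes more tersely that $R^+[H^j_\mfm(R)]_0$ already lands in positive degrees (where Frobenius is automatically nilpotent by the graded vanishing you cite), so $N=1$ suffices.

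The converse has a genuine gap: the induction does not close. To pass from $\gfdp_{(R/xR)^+}(R/xR)\ge t-1$ to the statement that $H^j_\mfm(R/xR)$ is nilpotent in negative degrees for $j<t-1$, you are invoking the converse at level $t-1$ for the ring $R/xR$. But that converse carries the hypothesis that the ring in question admits a homogeneous $F$-depth element of positive degree, and nothing guarantees $R/xR$ has one (cf.\ \Cref{rmk: m-primary b-ideal at f-depth obstructs existence of F-depth elts}: rings with no $F$-depth elements exist). A related red flag: the only place you invoke the $F$-depth property of $x$ is to obtain the drop $\gfdp_{(R/xR)^+}(R/xR)\ge t-1$, yet \Cref{lem: gfdp_J drops by at most 1 when modding out by x} already gives this for \emph{any} homogeneous regular element. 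So as written, your argument never genuinely uses the one hypothesis that separates the converse from an unconditional statement. (There is also a smaller bookkeeping issue: from $x^{kp^e}F^e(\eta)=0$ you only get $F^e(\eta)\in 0:_{H^j_\mfm(R)}x^{kp^e}$, so you must use the long exact sequence for $x^{kp^e}$ rather than $x$, and track the adjusted action $x^{p-1}F$ on the left-hand term as in \Cref{lem: graded seses associated to homogeneous elements}.)

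The paper's argument for the converse is entirely different and avoids induction. It invokes \Cref{thm: necessary condition for F-depth element}, which shows that an $F$-depth element $x$ satisfies $\gfdp_{xR}(R)=\fdp(R)$. Since $\deg x>0$ forces $xR\subset R^+$, \Cref{lem: gF-depth_J observations}(b) gives the squeeze
\[
\fdp(R)\le\gfdp_{R^+}(R)\le\gfdp_{xR}(R)=\fdp(R),
\]
whence $\fdp(R)=\gfdp_{R^+}(R)\ge t$. Thus $H^j_\mfm(R)$ is in fact nilpotent in \emph{all} degrees for $j<t$, which is stronger than what was to be shown.
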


The article is organized as follows. In Section 2 we introduce all needed background results and preliminaries. In Section 3, we define the fundamental invariant $\gfdp_J (R)$ and study its basic properties. In Section 4, we study the applications to Rees algebras and associated graded rings. Finally, in Section 5, we introduce $F$-depth elements, and examine their properties and applications. \\

\noindent {\bf Acknowledgements:} We thank Ian Aberbach and Thomas Marley for helpful discussions relevant to this work. We are also grateful to the anonymous referees for their insightful comments which have lead to significant improvements of the paper.

\section{Preliminaries}

In this paper, all rings will be assumed to be commutative and Noetherian. By convention, we use $\bullet$ to denote a variable input to a functor, e.g. $\Hom_R(\bullet,M)$ for the contravariant Hom functor and module $M$. In this section, we briefly review the theory of modules with Frobenius actions. Throughout, $(R,\mfm)$ denotes a local ring of characteristic $p>0$. The following definition provides one of our primary tools for the results that follow. 

\begin{defn}\label{defn: R[F]-module basics}
Let $M$ and $N$ be $R$-modules. An additive map $\rho:M\rightarrow N$ is called a \textbf{$p^e$-linear map} for some $e\in\mbn$ if $\rho(rm)=r^{p^e}\rho(m)$ for all $r\in R$ and $m\in M$. A $p$-linear endomorphism of $M$ is called a \textbf{Frobenius action} on $M$. Given $\rho:M\rightarrow M$ a Frobenius action and $N\subset M$ a submodule, we say $N$ is \textbf{$\rho$-stable} if $\rho(N)\subset N$. Finally, if $\rho: M\rightarrow M$ and $\rho' \colon M'\rightarrow M'$ are Frobenius actions, an $R$-linear map $\phi \colon M\rightarrow M'$ is said to \textbf{commute with Frobenius} if $\phi\circ \rho = \rho'\circ\phi$.
\end{defn}

The most commonly studied examples of modules with Frobenius actions are Frobenius actions induced by the Frobenius map $F \colon R\rightarrow R$, especially the canonical action on the local cohomology modules of $R$, see \Cref{xmp: lc has frob action}. It is also useful to interpret modules with Frobenius actions from a categorical perspective.

\begin{rmk}\label{rmk: Frobenius actions and R[F]-modules}
Consider the category whose objects are pairs $(M,\rho)$ where $M$ is an $R$-module and $\rho:M\rightarrow M$ is a Frobenius action, and whose morphisms $\phi \colon (M,\rho)\rightarrow (M',\rho')$ are $R$-linear maps which commute with Frobenius. This category is well-known to be equivalent to the category of left modules over the non-commutative ring $R[F]$, formed by adjoining a non-commuting variable $\chi$ to $R$ subject to the relations $r^{p}\chi = \chi r$ for each $r \in R$. See, for example, \cite[Def.~2.3]{EH08} for an explanation of this perspective. 

Due to this equivalence of categories, we say that $(M,\rho)$ is an $R[F]$-module to mean that $M$ is an $R$-module and $\rho$ is a Frobenius action on $M$. Moreover, $(N,\rho|_{N})$ is an $R[F]$-submodule of $(M,\rho)$ if $N$ is a $\rho$-stable $R$-submodule of $M$. If we say $(M,\rho)$ is a finitely generated or artinian $R[F]$-module, we mean that $M$ is finitely generated or artinian over $R$. Finally, we will repeatedly use the fact that a category of left modules over a non-commutative ring forms an abelian category without any further comment.

One final perspective we may use to discuss Frobenius actions is by twisting to get an $R$-linear map. In particular, let $F_*(\bullet)$ be the functor on the category of $R$-modules obtained by restricting scalars along the Frobenius map $F:R\rightarrow R$. One may easily check that a Frobenius action $\rho: M\rightarrow M$ is equivalent to an $R$-linear map $\tilde{\rho}: M\rightarrow F_*(M)$.
\end{rmk}

Since the category of $R[F]$-modules is an abelian category, we get natural $R[F]$-module structures on quotients and direct sums of $R[F]$-modules. We define the former below.

\begin{defn}\label{dff: quotient R[F]-structure}
Given an $R[F]$-module $(M,\rho)$ and a $R[F]$-submodule $N\subset M$, the $R$-module $M/N$ has a canonical Frobenius action $(M/N,\overline{\rho})$ given by $\overline{\rho}(m+N)=\rho(m)+N$. 
\end{defn}

Of interest when studying $R[F]$-modules, is the kernel of all iterates of the Frobenius action, which is a special case of the definition below.

\begin{defn}
Given an $R[F]$-module $(M,\rho)$ and a $\rho$-stable submodule $N\subset M$, the \textbf{$\rho$-orbit closure of $N$ in $M$} is the $R$-submodule given by \[N^\rho_M := \{ m \in M \mid \rho^e(m)\in N \text{ for some } e \in \mbn\}.\] The submodule $N$ is said to be \textbf{$\rho$-nilpotent} (or just \textbf{nilpotent}) if $N\subset 0^\rho_M$ and  \textbf{$\rho$-closed} if $N=N^\rho_M$. Finally, we will say the $R[F]$-module $M$ is \textbf{generalized nilpotent} if $M/0^\rho_M$ is a finite length $R$-module. 
\end{defn}

It is clear that if $N\subset M$ is an $R[F]$-submodule, then $\pi \colon M\rightarrow M/N$ commutes with Frobenius and $\pi^{-1}(0^{\overline{\rho}}_{M/N}) = N^\rho_M$. The following lemma helps us understand how nilpotence is tracked along short exact sequences; see \cite[Lem.~2.11]{MM} for a proof.

\begin{lemma}\label{lem: nilpotent direct sum}
If $0\to A \to C \to B \to 0$ is a short exact sequence of $R[F]$-modules, then $C$ is nilpotent if and only if $A$ and $B$ are nilpotent. In particular, $A\oplus B$ is nilpotent if and only if $A$ and $B$ are nilpotent.
\end{lemma}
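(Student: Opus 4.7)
The plan is a direct element chase using the pointwise definition given earlier: an $R[F]$-module $M$ is nilpotent precisely when $M=0_M^\rho$, i.e., for every $m\in M$ there exists some $e\in\mbn$ (possibly depending on $m$) with $\rho^e(m)=0$. Write the sequence as $0\to A\xrightarrow{\iota} C\xrightarrow{\pi} B\to 0$ with Frobenius actions $\rho_A,\rho_C,\rho_B$; by definition of morphism in the $R[F]$-category, and by construction of the quotient structure in \Cref{dff: quotient R[F]-structure}, both $\iota$ and $\pi$ commute with the respective Frobenii.

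For the forward direction, assume $C$ is nilpotent. If $a\in A$, pick $e$ with $\rho_C^e(\iota(a))=0$; then $\iota(\rho_A^e(a))=\rho_C^e(\iota(a))=0$ and injectivity of $\iota$ gives $\rho_A^e(a)=0$, so $A$ is nilpotent. For $b\in B$, lift to $c\in C$ with $\pi(c)=b$ and apply $\pi$ to $\rho_C^e(c)=0$ to obtain $\rho_B^e(b)=\pi(\rho_C^e(c))=0$.

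For the reverse direction, assume $A$ and $B$ are nilpotent and fix $c\in C$. Nilpotence of $B$ yields $e_1$ with $\rho_B^{e_1}(\pi(c))=0$, so $\rho_C^{e_1}(c)\in\ker\pi=\im\iota$, and we may write $\rho_C^{e_1}(c)=\iota(a)$ for some $a\in A$. Nilpotence of $A$ then produces $e_2$ with $\rho_A^{e_2}(a)=0$, and $\rho_C^{e_1+e_2}(c)=\iota(\rho_A^{e_2}(a))=0$. The \emph{in particular} clause follows by applying both implications to the split short exact sequence $0\to A\to A\oplus B\to B\to 0$.

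The only subtle point — the "main obstacle" in spirit, though rather modest — is to remember that in the paper's convention nilpotence is interpreted pointwise rather than uniformly (no single $e$ need kill all of $M$). With a uniform definition the reverse direction would require some finiteness hypothesis on $A$ to chain the exponents $e_1,e_2$; here we are free to let them depend on $c$, and the argument goes through without further input.
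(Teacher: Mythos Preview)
Your proof is correct. Note, however, that the paper does not actually supply its own argument for this lemma: it simply refers the reader to \cite[Lem.~2.11]{MM}. Your element chase is the standard way to prove this and is almost certainly what appears in that reference; in any case it is complete and self-contained, and your closing remark about the pointwise (rather than uniform) reading of nilpotence is exactly the right thing to flag. One minor quibble: the fact that $\iota$ and $\pi$ commute with Frobenius is simply the definition of a morphism of $R[F]$-modules (see \Cref{rmk: Frobenius actions and R[F]-modules}); the appeal to \Cref{dff: quotient R[F]-structure} is not really needed here.
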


In the study of nilpotent Frobenius actions, it is often important to keep track of the smallest iterate $e$ for which $\rho^e=0$. 

\begin{defn}
Given an $R[F]$-module $(M,\rho)$, the \textbf{Hartshorne-Speiser-Lyubeznik number of $(M,\rho)$} is $\hsl(M,\rho) :=\inf \{e \in \NN \mid \rho^e(0^\rho_M)=0\}\in \NN\cup\{\infty\}.$ 
\end{defn}

\begin{rmk}\label{rmk: HSL numbers are finite}
If $(M,\rho)$ is a finitely generated $R[F]$-module, then clearly $\hsl(M)<\infty$. In the case that $M$ is artinian, Hartshorne-Speiser \cite[Prop.~1.11]{HS77}, Lyubeznik \cite[Prop.~4.4]{Lyu97}, and Sharp \cite[Cor.~1.8]{Sha06} proved that $\hsl(M)<\infty$.
\end{rmk}

We conclude this subsection by describing the most important example of an $R[F]$-module for our purposes, that is, how a Frobenius action on a module $M$ induces a Frobenius action on the local cohomology modules of $M$. For another treatment (in the case $M=R$), we refer the reader to \cite[Rem.~2.1]{Sha06}.

\begin{example}\label{xmp: lc has frob action}
Let $(M,\rho)$ be a finitely generated $R[F]$-module and let $I\subset R$ be an ideal. We view $\rho$ as a $R$-linear map $\tilde{\rho} \colon M\rightarrow F_*(M)$. We can apply the local cohomology functor $H^j_I(\bullet)$ to $\tilde{\rho}$ to get an $R$-linear map which we also call $\tilde{\rho}$ by minor abuse of notation \[
\tilde{\rho}\colon H^j_I(M)\rightarrow H^j_I(F_*(M)).
\] 

Since $F_*$ is a restriction of scalars functor, which is exact, we have $H^j_I(F_*(M))$ is isomorphic to $F_*(H^j_I(M))$, so $\tilde{\rho}:H^j_I(M)\rightarrow F_*(H^j_I(M))$ is equivalent to a Frobenius action on $H^j_I(M)$.
\end{example}

\subsection{Graded rings and Frobenius}

We now discuss some preliminaries involved in studying graded rings and the Frobenius map.

\begin{rmk}\label{rmk: graded setting}
We will say that $R$ is a graded ring to mean that $R$ is an $\mbn$-graded ring where $[R]_0$ is a local ring $(S,\mfn)$ of prime characteristic $p>0$, and write $\mfm = \mfn + R_+$. For our applications, it is safe to assume that $R$ is generated over $S$ by elements of degree $1$, i.e., $R=S[R_1]$, but this may not be necessary in the results that follow. By a graded $R$-module, we mean a $\ZZ$-graded $R$-module. For a graded $R$-module $M$ and an $n\in\mbz$, we let $M(n)$ denote the graded $R$-module with grading given by $[M(n)]_m=[M]_{n+m}$. We will often refer to $M(n)$ as a \textbf{graded twist} of $M$.
\end{rmk}

As we need to use Frobenius actions in the context of Rees algebras and associated graded rings, we need to study the appropriate notion of graded Frobenius action.

\begin{defn}
If $M$ is a graded $R$-module, a $p$-linear map $\rho:M\rightarrow M$ is a \textbf{graded Frobenius action} if $\rho([M]_n)\subset [M]_{np}$ for all $n \in \mbz$. We will call $(M,\rho)$ a \textbf{graded $R[F]$-module} if $\rho:M\rightarrow M$ is a graded Frobenius action.
\end{defn}

Since a graded Frobenius action multiplies the degree of a homogeneous element by $p$, if $(M,\rho)$ is a graded $R[F]$-module then $0^\rho_M$ is a homogeneous $\rho$-stable submodule of $M$. We will also need to understand how Frobenius actions are affected by graded twists.

\begin{defn} \label{dff: twisted action}
Let $(M,\rho)$ be a graded $R[F]$-module and let $n\in \ZZ$. The graded twist $M(n)$ does not carry a natural Frobenius action, but $\rho \colon M\rightarrow M$ induces a natural $p$-linear map $^{(n)}\rho \colon M(n)\rightarrow M(np)$ which we will refer to as a \textbf{twisted action}. 
\end{defn}

Since $H^j_\mfm(M(n)) = H^j_\mfm(M)(n)$, a twisted action on $M$ (as in the previous definition) will induce a twisted action on the local cohomology modules of $M$ via a simple graded adjustment to \Cref{xmp: lc has frob action}.

\begin{rmk} \label{rmk: twisted action nilpotent}
Let $M$ be a graded $R[F]$-module and let $n\in \ZZ$. Then, $\rho^e\colon M\rightarrow M$ vanishes if and only if $^{(n)}\rho^e \colon M(n)\rightarrow M(np^e)$ vanishes, since $^{(n)}\rho$ and $\rho$ agree after applying the forgetful functor from the category of graded $R[F]$-modules to the category of $R[F]$-modules. 
\end{rmk}

In the ungraded case, if $(M,\rho)$ is an $R[F]$-module, then for any $x \in R$ we have that $x\rho \colon M\rightarrow M$ is another Frobenius action on $M$. When $R$ and $M$ are graded and $x$ is homogeneous, we can incorporate a twist by $\deg x$ to obtain another Frobenius action.

\begin{rmk}\label{rmk: adjusted Frob actions}
Let $(M,\rho)$ be a graded $R[F]$-module and let $x \in R$ be homogeneous with $\deg x = m$. Then, $x^{p-1}\rho \colon M(-m)\rightarrow M(-m)$ is a graded Frobenius action on $M(-m)$, as if $\xi \in [M(-m)]_n$ then $x^{p-1}\rho(\xi) \in [M(-m)]_{np}$.  Further, the map $x^{p-1}\cdot \colon M\rightarrow M$ given by $x^{p-1}\cdot(\xi)=x^{p-1}\xi$ induces the same map $x^{p-1}\cdot\colon H^j_\mfm(M)\rightarrow H^j_\mfm(M)$, so we must have that the action $x^{p-1}\rho\colon M(-m)\rightarrow M(-m)$ induces the Frobenius action $x^{p-1}\rho\colon H^j_\mfm(M)(-m) \rightarrow H^j_\mfm(M)(-m)$.
\end{rmk}

The following definition is useful when studying Frobenius actions on quotient rings and works in either the graded or ungraded setting.

\begin{defn}\label{dff: relative action}
Let $I\subset R$ be a (homogeneous) ideal. Recall the \textbf{Frobenius bracket powers} of $I$, i.e. if $I=(x_1,\cdots,x_t)$, then $I^{\fbp{p^e}}$ is the ideal $I^{\fbp{p^e}} = (x_1^{p^e},\cdots, x_t^{p^e})$. We have the following commutative diagram of (graded) $R$-modules. 
\begin{center}
    \begin{tikzcd}
    R/I\arrow{dr}[swap]{f^e_R} \arrow{rr}{F^e} && R/I \\
    & R/I^{\fbp{p^e}} \arrow{ur}[swap]{\pi_e} &
    \end{tikzcd}
\end{center} 
where $F^e$ is the Frobenius map on $R/I$, $\pi_e$ is the canonical projection map, and $f^e_R$ is the $p^e$-linear map defined by $f^e_R(x+I) = x^{p^e}+I^{\fbp{p^e}}$. The $p$-linear map $f_R \colon R/I\rightarrow R/I^{\fbp{p}}$ is sometimes referred to in the literature as the \textbf{relative Frobenius action}.
\end{defn}

The following result about the Frobenius map on a quotient ring is well-known and used implicitly and explicitly in the literature surrounding quotients of rings of prime characteristic. We state it for graded rings, but it also trivially applies to a local ring. 

\begin{lemma}\label{lem: graded seses associated to homogeneous elements}
Let $(R,\mfm)$ be a graded ring and let $x\in R$ be a homogeneous regular element with $\deg(x)=m$. Then for any $e \in \mbn$, we have the following two commutative diagrams with exact rows, whose horizontal maps are homogeneous $R$-linear maps of degree $0$ and whose vertical maps are graded $p^e$-linear over $R$.
\begin{center} 
\begin{tikzcd}
    0 \arrow{r} & R(-m)\arrow{r}{\cdot x}\arrow{d}{^{(-m)}F^e} & R\arrow{d}{F^e} \arrow{r} & R/xR\arrow{d}{f^e_R} \arrow{r} & 0 \\
    0 \arrow{r} & R(-mp^e) \arrow{r}{\cdot x^{p^e}} & R \arrow{r} & R/x^{p^e}R \arrow{r} & 0 \\ 
    &&&& \\[-2em]
    0 \arrow{r} & R(-m)\arrow{r}{\cdot x}\arrow{d}{x^{p^e-1}F^e} & R\arrow{d}{F^e} \arrow{r} & R/xR\arrow{d}{F^e} \arrow{r} & 0 \\
    0 \arrow{r} & R(-m) \arrow{r}{\cdot x} & R \arrow{r} & R/xR \arrow{r} & 0
    \end{tikzcd} 
\end{center}
\end{lemma}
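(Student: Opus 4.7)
The plan is to verify each claim (exactness of rows, degree-zero horizontal maps, graded $p^e$-linear vertical maps, commutativity of each square) in sequence; the whole lemma is a bookkeeping check, with the only subtlety being to correctly track gradings under the twists $R(-m)$ and $R(-mp^e)$.

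First I would dispatch exactness. Both rows are Koszul-style short exact sequences of the form $0 \to R(-\deg y) \xrightarrow{\cdot y} R \to R/yR \to 0$, with $y=x$ for the top row of the first diagram and both rows of the second, and $y=x^{p^e}$ for the bottom row of the first diagram. Since $x$ is regular on $R$ so is $x^{p^e}$, hence each such sequence is exact. The horizontal maps are $R$-linear of degree $0$: multiplication by $x$ (respectively $x^{p^e}$) raises degree by $m$ (respectively $mp^e$), which is exactly absorbed by the shift on the source module, and the quotient projections are obviously degree $0$.

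Next I would check that the vertical maps are graded $p^e$-linear. The $p^e$-linearity of $F^e$ on $R$ and of $F^e$ on $R/xR$ is immediate from $(rs)^{p^e}=r^{p^e}s^{p^e}$; the relative Frobenius $f^e_R$ from \Cref{dff: relative action} is $p^e$-linear by construction; and the twisted action $^{(-m)}F^e$ from \Cref{dff: twisted action} is $p^e$-linear at the level of underlying modules. For the grading, a homogeneous element of degree $n$ in $R(-m)$ lies in $R_{n-m}$, so $F^e$ sends it to $R_{(n-m)p^e} = [R(-mp^e)]_{np^e}$, matching the required shift in the first diagram. For the map $x^{p^e-1}F^e$ appearing in the second diagram, the same element is sent to $x^{p^e-1}\cdot r^{p^e}$, which lives in degree $m(p^e-1)+(n-m)p^e = np^e - m = [R(-m)]_{np^e}$, so this is also graded of the correct type, and it is $p^e$-linear by the same computation as in \Cref{rmk: adjusted Frob actions}.

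Finally, commutativity of the four squares reduces to elementary identities. For the first diagram: the right square follows directly from the definition $f^e_R(r+xR)=r^{p^e}+x^{p^e}R$, while the left square follows from
\[
F^e(r\cdot x) = r^{p^e}x^{p^e} = (\,^{(-m)}F^e(r))\cdot x^{p^e}.
\]
For the second diagram: the right square uses that $xR$ is an ideal so that $r^{p^e} \equiv (r+xR)^{p^e} \pmod{xR}$, and the left square is the identity
\[
F^e(r\cdot x) = r^{p^e}x^{p^e} = x\cdot(x^{p^e-1}r^{p^e}) = (\,x^{p^e-1}F^e)(r)\cdot x.
\]
The main potential pitfall is not a genuine obstacle but a clerical one: keeping straight which module each element sits in, so that the shifts $(-m)$ versus $(-mp^e)$ match up with the actual degree change effected by $F^e$ and by multiplication by $x$ or $x^{p^e}$. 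Once the grading conventions of \Cref{dff: twisted action} and \Cref{rmk: adjusted Frob actions} are applied, every square commutes on the nose.
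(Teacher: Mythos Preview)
Your proof is correct and follows essentially the same approach as the paper: both arguments verify exactness, degree-zero horizontality, graded $p^e$-linearity of the vertical maps, and commutativity of the squares by direct computation with the grading conventions of \Cref{dff: twisted action} and \Cref{rmk: adjusted Frob actions}. The only organizational difference is that the paper packages both diagrams into a single three-row diagram at each graded piece $[R]_n$ (with $p^e$-linear maps between the first two rows and the $S$-linear maps $\cdot\, x^{p^e-1}$, $\id$, $\pi_e$ between the last two), so that the second diagram arises by composing the vertical maps, whereas you verify the two diagrams independently.
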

\begin{proof}
Recall $S=[R]_0$. For $n \in \mbn$, we get a commutative diagram with exact rows whose horizontal maps are $S$-linear, and whose vertical maps are $p^e$-linear over $S$ between the first two rows and $S$-linear between the last two rows.
\begin{center} \begin{tikzcd}
0 \arrow{r} & \left[R\right]_{n-m}\arrow{d}{^{(-m)}F^e} \arrow{r}{\cdot x} & \left[R\right]_n\arrow{d}{F^e} \arrow{r} & \left[R/xR\right]_n \arrow{d}{f^e_R} \arrow{r} & 0 \\
0 \arrow{r} & \left[R\right]_{np^e-mp^e}\arrow{d}{ \cdot x^{p^e-1}} \arrow{r}{\cdot x^{p^e}} & \left[R\right]_{np^e}\arrow[equals]{d} \arrow{r} & \left[R/x^{p^e}R\right]_{np^e}\arrow{d}{\pi} \arrow{r} & 0 \\
0 \arrow{r}& \left[R\right]_{np^e-m} \arrow{r}{\cdot x} & \left[R\right]_{np^e} \arrow{r} & \left[R/xR\right]_{np^e} \arrow{r} & 0
\end{tikzcd} \end{center} The conclusion then follows from \Cref{rmk: twisted action nilpotent} and \Cref{dff: twisted action}. 
\end{proof}

We next give generalizations of two important results used in studying Frobenius actions on local cohomology. The first utilizes filter regular sequences; as for the ungraded case, a sequence $\uline{x}=x_1,\cdots,x_t$ of homogeneous elements is \emph{filter regular} if for each $0\le i < t$, the image of $(x_1,\cdots,x_i):x_{i+1}$ forms a finite length ideal in $R/(x_1,\cdots,x_i).$ Filter regular sequences are crucial to the Nagel-Schenzel isomorphism, see \cite[Lem.~3.4]{NS94}; we now generalize this result to the graded $R[F]$-module case, which may be of independent interest.

\begin{thm}\label{thm: graded NS}
Suppose $R$ is a graded ring with homogeneous maximal ideal $\mfm$. Let $M$ be a finitely generated graded $R[F]$-module and suppose $\uline{x}=x_1,\cdots,x_t$ is a homogeneous filter regular sequence on $M$. 
For each $1 \leq j < t$, we have an isomorphisms of graded $R[F]$-modules \[
H^j_\fm(M) \cong H^{j-t}_\fm(H^t_{(\ux)}(M))
\] for $j\geq t$ and $H^j_\fm(M)\cong H^j_{(\ux)}(M)$ for $j<t$.
\end{thm}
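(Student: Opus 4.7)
The plan is to invoke the Grothendieck composite-functor spectral sequence associated with $\Gamma_\mfm\circ\Gamma_{(\ux)}=\Gamma_\mfm$ — the equality holds because $(\ux)\subseteq\mfm$ forces every $\mfm$-torsion element to be $(\ux)$-torsion — yielding
\[
E_2^{p,q} = H^p_\mfm\!\bigl(H^q_{(\ux)}(M)\bigr) \Rightarrow H^{p+q}_\mfm(M).
\]
To keep everything in the category of graded $R[F]$-modules, I would construct this spectral sequence explicitly from the Čech double complex $C^{p,q} = \check{C}^p(\underline{y};\check{C}^q(\ux;M))$, where $\underline{y}$ is a homogeneous system of generators of $\mfm$. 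Each localization appearing inherits a canonical graded Frobenius via $\rho(m/s) = \rho(m)/s^p$, and every Čech differential is $R[F]$-equivariant by construction. The total complex computes $H^\bullet_{(\ux)+\mfm}(M)=H^\bullet_\mfm(M)$, and the associated column filtration is $R[F]$-stable and grading-preserving, so every page of the spectral sequence lives in the category of graded $R[F]$-modules and the edge maps to the abutment are $R[F]$-equivariant.

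The central task is then to show this spectral sequence collapses at $E_2$ onto the row $q=t$ together with the column $p=0$, $q<t$. Two vanishing observations drive this. First, the cohomological dimension of $(\ux)$ is at most $t$, so $H^q_{(\ux)}(M)=0$ for $q>t$. Second, filter regularity of $\ux$ on $M$ says that $\ux$ restricts to a regular sequence on each $M_\mfp$ with $\mfp\neq\mfm$, so $\on{Supp}_R H^q_{(\ux)}(M)\subseteq\{\mfm\}$ for $q<t$. In particular $H^q_{(\ux)}(M)$ is already $\mfm$-torsion for $q<t$, giving $E_2^{p,q}=0$ for $p>0$, $q<t$, and $E_2^{0,q}=H^q_{(\ux)}(M)$ for $q<t$. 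Every higher differential $d_r$ ($r\ge 2$) then maps between zero groups, so $E_\infty=E_2$. Reading off the abutment: for $j<t$ the only nonzero contribution to $H^j_\mfm(M)$ is $E_\infty^{0,j}=H^j_{(\ux)}(M)$, and for $j\ge t$ it is $E_\infty^{j-t,t}=H^{j-t}_\mfm(H^t_{(\ux)}(M))$; both identifications are graded and $R[F]$-equivariant by the construction above.

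The main obstacle is confirming that the grading and the Frobenius action simultaneously survive the spectral-sequence machinery intact. This is exactly why I would favor the explicit double Čech complex over a derived-category formulation: there Frobenius acts term-by-term compatibly with every differential, so the spectral sequence is manifestly one of graded $R[F]$-modules and no additional compatibility checks are needed at the edges. As an alternative route avoiding spectral sequences entirely, one could induct on $t$ via the graded $R[F]$-equivariant short exact sequence
\[
0 \to \Gamma_{(\ux)}(M) \to M \to M/\Gamma_{(\ux)}(M) \to 0,
\]
noting that the submodule has support in $\{\mfm\}$ (hence no higher $H^j_\mfm$), while in the quotient a suitable prefix of $\ux$ becomes a genuine regular sequence; the graded/$F$-compatibility then reduces to the classical Nagel--Schenzel isomorphism combined with the standard Frobenius-equivariance of long exact sequences of local cohomology, as already used implicitly in \Cref{lem: graded seses associated to homogeneous elements}.
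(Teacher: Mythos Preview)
Your argument is correct but follows a different route from the paper. The paper adapts Huong's elementary proof: it splits the single \v{C}ech complex $C^{\sbt}(\ux;M)$ into short exact sequences $0\to L^i\to K^i\to H^i_{(\ux)}(M)\to 0$ and $0\to K^i\to C^i\to L^{i+1}\to 0$, applies $H^j_\mfm(\bullet)$, and chases the resulting long exact sequences inductively, using exactly the two vanishing facts you isolate (that $H^j_\mfm(C^i)=0$ for $i\ge 1$, and that $H^q_{(\ux)}(M)$ is $\mfm$-torsion for $q<t$). Your double-\v{C}ech spectral sequence packages the same information more conceptually and reaches the conclusion in one step via $E_2$-degeneration; the paper's approach is essentially that spectral sequence unwound by hand. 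The payoff of the elementary route is that graded $R[F]$-compatibility is automatic at every stage---each step is a long exact sequence in the abelian category of graded $R[F]$-modules---whereas in your approach this compatibility must be argued for the filtration and edge maps, which you handle correctly by building the spectral sequence from the explicit double \v{C}ech complex rather than from derived functors. Your alternative inductive sketch via $0\to\Gamma_{(\ux)}(M)\to M\to M/\Gamma_{(\ux)}(M)\to 0$ is closer in spirit to the paper's method, though the paper's decomposition is finer and does not require an outer induction on $t$.
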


\begin{proof}
The proof follows completely the strategy of \cite[Thm~1.1]{Huo17}, with minor adaptations to keep track of the grading and the Frobenius action. Set $$C^{\sbt} :=
\begin{tikzcd}
    0 \arrow{r} & M \arrow{r}{d^0} & \oplus M_{x_i} \arrow{r}{d^1} & \cdots \arrow{r}{d^{t-1}} & M_{x_1\cdots x_t} \arrow{r} & 0
\end{tikzcd}$$ the \v{C}ech complex, whose $j$-th cohomology is isomorphic to $H_{(\ux)}^j(M)$. The Frobenius action on $M$ induces the $R[F]$-module structure on $H_{(\ux)}^j(M)$ as in \Cref{xmp: lc has frob action}. Additionally, the coboundary maps in the \v{C}ech complex are homogeneous and $R[F]$-linear, so the \v{C}ech complex is a complex of graded $R[F]$-modules. Further, the category of graded $R[F]$-modules is abelian, whence kernels and cokernels remain graded $R[F]$-modules. 

As in the proof of \cite[Thm~1.1]{Huo17}, one splits the \v{C}ech complex into short exact sequences as follows.  For positive integer $i$, set $L^i = \im d^{i-1}$ and $K^i = \ker d^i$ and note one has short exact sequences \[\begin{tikzcd}[row sep=2pt]
    0 \arrow{r} & L^i \arrow{r} & K^i \arrow{r} & H_{(\ux)}^i(M) \arrow{r} & 0, & \\
    0 \arrow{r}& K^i \arrow{r} & C^i \arrow{r} & L^{i+1} \arrow{r} & 0 & \text{ for } i>0, \text{ and} \\
    0 \arrow{r} & H^0_{(\ux)}(M) \arrow{r} & M \arrow{r} & L^1 \arrow{r} & 0. & 
\end{tikzcd}\]
Using the long exact sequences produced by applying $H_\fm^j(\Arg)$ to these sequences, one next notes the following claims, compare to \cite[Lem. 2.3, 2.5, Cor. 2.4]{Huo17}.

\medskip

\noindent {\bf Claim:} The following hold.
\begin{enumerate}
\item Each $C^i$ satisfies $H_\fm^j(C^i) \cong 0$ for all $j \geq 0$, in particular $H_\fm^0(L^i) \cong H_\fm^0(K^i) \cong 0$ for $i \geq 1$, by left exactness.
\item For $j < t$, $H_{(\ux)}^j(M)$ is $\fm$-torsion.
\end{enumerate}

By induction on $j$, it now follows that for $j < t$, $H_\fm^j(L^1) \cong  H_\fm^1(L^j)$ whence $$H_\fm^j(M) \cong H_\fm^j(L^1) \cong  H_\fm^1(L^j) \cong H_{(\ux)}^j(M).$$ Similarly, for $j \geq t$, one obtains that $$H_\fm^j(M) \cong H_\fm^j(L^1) \cong H_\fm^{j-t+1}(L^t) \cong H_\fm^{j-t}( H_{(\ux)}^t(M)),$$ as desired.

Hence, it suffices to ensure the claims hold in the category of graded $R[F]$-modules. The first claim is clear by definition. For the second claim, it is also clear by definition that for a homogeneous filter regular sequence $\ux$, the module $((x_1,\ldots,x_i)M :_M x_i)/(x_1,\ldots,x_i)M$ is annihilated by a power of $\fm$, whence $\ux$ becomes a possibly improper homogeneous regular sequence on the homogeneous localization $M_\fm$. However, then $H_{(\ux) R_\fm}^j(M_\fp) \cong 0$ for $j < t$ for all $\fp$ in the punctured spectrum whence $H_{(\ux)}^j(M)$ is $\fm$-torsion for $i < t$. 
\end{proof}

\begin{rmk}
By endowing the module $M$ in the previous theorem with the trivial Frobenius action sending every element to 0, the above provides a proof of a homogeneous version of \cite[Lem.~3.4]{NS94}. Furthermore, minor adjustments to the proof above also provide a local $R[F]$-module version of \cite[Lem.~3.4]{NS94}. 
\end{rmk}

\begin{rmk}\label{rmk: nagel-schenzel nilpotence}
The preceding theorem allows us to understand Frobenius actions on lower local cohomology modules by using a top local cohomology module. In particular, suppose $R$, $\uline{x}$, and $M$ are as in the statement of \Cref{thm: graded NS}. Write $\rho$ for the Frobenius action on $M$, and as before, continue to use $\rho$ for the Frobenius action induced on the graded local cohomology modules of $M$. If $\xi \in H^j_\mfm(M)$ is a homogeneous element, we can view $\xi$ as an $\mfm$-torsion cohomology class $[m+(x_1^n,\cdots,x_j^n)M] \in H^j_{(x_1,\cdots,x_j)}(M)$, whose Frobenius action is given by \[\rho([m+(x_1^n,\cdots,x_j^n)M]) = [\rho(m)+(x_1^{np},\cdots,x_j^{np})M].\] Furthermore, $\xi$ is nilpotent in $H^j_\mfm(M)$ if and only if, for some $e \gg 0$, we have \[ \left[\rho^e(m)+(x_1^{np^e},\cdots,x_j^{np^e})M\right]=0\] in $H^j_{(x_1,\cdots,x_j)}(M)$.
\end{rmk}

Utilizing the Nagel-Schenzel isomorphism, in the local setting Polstra-Quy \cite{PQ19} studied how $F$-depth is related to the relative Frobenius action on quotients by a filter regular element. By using the graded Nagel-Schenzel isomorphism above and \Cref{lem: graded seses associated to homogeneous elements}, we obtain the following graded version of their result \cite[Thm.~4.2]{PQ19}. 

\begin{thm}\label{thm: graded PQ 4.2}
Let $R$ be a graded ring of dimension $d>0$ as in \Cref{rmk: graded setting}. For every $t>0$, the following are equivalent.
\begin{enumerate}[label=(\alph*)]
    \item The $F$-depth of $R$ satisfies $\fdp (R) > t$.
    \item For every homogeneous filter regular element $x \in R$ and $j<t$, each element of $H^j_\mfm(R/xR)$ vanishes under $f^e_R$ for some $e \in \NN$.
    \item There is a homogeneous filter regular element $x\in R$ such that for all $n \ge 1$ and $j<t$, each element of $H^j_\mfm(R/x^nR)$ vanishes under $f^e_R$ for some $e \in \NN$.
    \item There is a homogeneous filter regular element $x \in R$ such that for all $e' \ge 0$ and $j<t$, each element of $H^j_\mfm(R/x^{p^{e'}}R)$ vanishes under $f^e_R$ for some $e \in \NN$.
\end{enumerate}
\end{thm}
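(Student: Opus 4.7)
The plan is to follow the strategy of Polstra--Quy for \cite[Thm.~4.2]{PQ19}, translating each step into the graded setting via \Cref{thm: graded NS} and the first commutative diagram of \Cref{lem: graded seses associated to homogeneous elements}. The implications $(b) \Rightarrow (c) \Rightarrow (d)$ are essentially formal: any positive power of a homogeneous filter regular element remains homogeneous filter regular, so applying (b) to $x^n$ yields (c), while (d) is the specialization $n = p^{e'}$ of (c). The substance lies in $(a) \Rightarrow (b)$ and $(d) \Rightarrow (a)$.

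For $(a) \Rightarrow (b)$, fix a homogeneous filter regular element $x \in R$ of degree $m$. Applying $H^\bullet_\mfm$ to the first diagram of \Cref{lem: graded seses associated to homogeneous elements} produces a commutative ladder of long exact sequences
\[
\cdots \to H^j_\mfm(R) \to H^j_\mfm(R/xR) \xrightarrow{\delta} H^{j+1}_\mfm(R)(-m) \to \cdots
\]
whose vertical Frobenius structures are $F^{e}$, $f^{e}_R$, and ${}^{(-m)}F^{e}$, respectively. For $j < t$, both $H^j_\mfm(R)$ and $H^{j+1}_\mfm(R)$ are $F$-nilpotent by (a). Given a homogeneous class $\xi \in H^j_\mfm(R/xR)$, choose $e_1$ so that $F^{e_1}$ annihilates $\delta(\xi)$; by commutativity, $f^{e_1}_R(\xi)$ lifts to some $\eta \in H^j_\mfm(R)$ in the sequence associated to $R/x^{p^{e_1}}R$. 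Picking $e_2$ with $F^{e_2}(\eta) = 0$ and invoking the factorization $F^{e_2} = \pi_{e_2} \circ f^{e_2}_R$ from \Cref{dff: relative action} then forces $f^{e_1+e_2}_R(\xi) = 0$ in $H^j_\mfm(R/x^{p^{e_1+e_2}}R)$.

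For $(d) \Rightarrow (a)$, \Cref{thm: graded NS} applied to the single filter regular element $x$ gives $H^j_\mfm(R) \cong H^{j-1}_\mfm(H^1_{(x)}(R))$ for $j \geq 1$ and $H^0_\mfm(R) \cong H^0_{(x)}(R)$. Writing $H^1_{(x)}(R) \cong \varinjlim_n R/x^n R$ up to graded twists, and using that local cohomology commutes with direct limits compatibly with Frobenius, every class in $H^j_\mfm(R)$ descends from some $\bar{\xi} \in H^{j-1}_\mfm(R/x^{p^{e'}}R)$. By (d), $f^e_R(\bar{\xi}) = 0$ for some $e$, and then $F^e = \pi_e \circ f^e_R$ yields $F^e(\bar{\xi}) = 0$, so $F^e$ annihilates the class in the direct limit, establishing $F$-nilpotence of $H^j_\mfm(R)$ for $1 \leq j \leq t$. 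The case $j = 0$ is handled separately: $H^0_\mfm(R)$ is a finitely generated $\mfm$-torsion submodule of $R$, hence killed by some $\mfm^K$, and combining the conclusion $\xi^{p^e} \in x^{p^{e'+e}} R$ from (d) with this uniform annihilator and the filter regularity of $x$ forces $\xi^{p^e} = 0$ once $e'$ is chosen sufficiently large.

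The principal obstacle I expect is the diagram chase in $(a) \Rightarrow (b)$: coherently iterating the relative Frobenius across the evolving sequence $R/xR \to R/x^{p^{e_1}}R \to R/x^{p^{e_1+e_2}}R$ while maintaining compatibility with the connecting homomorphisms and with the graded twists of \Cref{dff: twisted action}. A priori the iterate $e$ needed to kill a given class depends on the class, which is acceptable since (b) is a pointwise statement.
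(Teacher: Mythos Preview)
Your proposal is correct and follows essentially the same route as the paper, which simply records that the proof is an identical diagram chase to \cite[Thm.~4.2]{PQ19} after feeding the graded diagrams of \Cref{lem: graded seses associated to homogeneous elements} and the graded Nagel--Schenzel isomorphism of \Cref{thm: graded NS} into the argument. Two very small points worth tightening: \Cref{lem: graded seses associated to homogeneous elements} is stated for a \emph{regular} element, so for a merely filter regular $x$ you should note that $(0:x)$ has finite length and hence contributes nothing to $H^j_\mfm$ for $j\ge 1$; and in $(d)\Rightarrow(a)$ the $j=0$ case is actually automatic, since $\dim R>0$ forces every $\mfm$-torsion element of $R$ to be nilpotent, so your separate argument there (where the dependence of $e$ on $e'$ is not fully resolved) can be replaced by this one-line observation.
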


The proof of this theorem follows by performing an identical diagram chase as in the proof of \cite[Thm.~4.2]{PQ19} after applying the graded local cohomology functor to the commutative diagrams in \Cref{lem: graded seses associated to homogeneous elements}.

\section{Generalized F-depth with respect to an ideal}\label{sec: Fdepth}

Throughout this section, let $(R,\mfm)$ be a local ring of prime characteristic $p>0$ and of dimension $d$ and let $J\subset R$ be an ideal. We begin by defining a depth-like invariant, which can be seen as a Frobenius version of the generalized depth, $\gdp_J(R)$, of Huckaba-Marley \cite{HM94}, as a generalization of Lyubeznik's $F$-depth, $\fdp(R)$, \cite[Def.~4.1]{Lyu06}, and the generalized $F$-depth of Maddox-Miller \cite[Def.~3.3]{MM}. For convenience of the reader, we recall that 
\[
\gdp_J(R) = \inf\{ j \in \NN \mid J^N H^j_\mfm(R)\neq 0 \text{ for any } N\in \NN\},
\] and  \[
\fdp(R) = \inf\{j \in \NN \mid H^j_\mfm(R) \text{ is not nilpotent under } F\}.
\] For the definition of the generalized $F$-depth, take $J=\mfm$ in the definition below.

\begin{defn}
\label{defn: gFdepth}
Let $(M,\rho)$ be an $R[F]$-module and let $J\subset R$ be any ideal. The \textbf{generalized $F$-depth of $(M,\rho)$ with respect to $J$} is \[
\gfdp_J(M) := \inf \{j \in \mbn \mid (J^N H^j_\mfm(M),\rho) \text{ is not nilpotent for any } N\in \mbn \} \in \mbn \cup\{\infty\}.
\]
\end{defn}

To better understand generalized $F$-depth with respect to an ideal, it will be useful to explicate equivalent conditions for when $(J^NH^j_\mfm(M),\rho)$ is nilpotent.  

\begin{lemma} \label{lem: equivalent conditions for J^NM to be nilpotent}
For any $R[F]$-module $(M,\rho)$ with finite Hartshorne-Speiser-Lyubenik number $e_0$, the following are equivalent.
\begin{enumerate}[label=(\alph*)]
\item The $R[F]$-module $(J^N M,\rho)$ is nilpotent for some $N \ge 0$.
\item There is an $N\ge 0$ such that for all $x \in J^N$ and any $e \ge e_0$, the $p^e$-linear map $x^{p^e}\rho^e\colon M \rightarrow M$ vanishes. 
\item The ideal $J$ annihilates $M/0^\rho_M$ up to radical.
\end{enumerate}
\end{lemma}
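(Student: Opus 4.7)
The proof is essentially a direct unwinding of definitions once one uses $p^e$-linearity of $\rho^e$ and the fact that the HSL number controls when iterates of $\rho$ annihilate the nil submodule $0^\rho_M$. The plan is to establish the cycle (a) $\Rightarrow$ (b) $\Rightarrow$ (c) $\Rightarrow$ (a), rather than doing pairwise equivalences.

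First, I would show (a) $\Rightarrow$ (b). Suppose $(J^N M,\rho)$ is nilpotent, meaning $J^N M \subset 0^\rho_M$. By the definition of the Hartshorne-Speiser-Lyubeznik number, $\rho^{e_0}$ vanishes on $0^\rho_M$, so for any $e \geq e_0$ we have $\rho^e(J^N M) = 0$. For any $x \in J^N$ and $m \in M$, $p^e$-linearity of $\rho^e$ gives $x^{p^e}\rho^e(m) = \rho^e(xm) = 0$, since $xm \in J^N M$. Thus $x^{p^e}\rho^e \colon M \to M$ vanishes, establishing (b) with the same $N$.

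Next, (b) $\Rightarrow$ (c): Assuming the condition in (b), I would fix $x \in J^N$ and $m \in M$. Then $\rho^{e_0}(xm) = x^{p^{e_0}}\rho^{e_0}(m) = 0$ by (b), so $xm \in 0^\rho_M$. This shows $J^N M \subset 0^\rho_M$, i.e., $J^N$ annihilates $M/0^\rho_M$, so $J$ annihilates $M/0^\rho_M$ up to radical. Finally, (c) $\Rightarrow$ (a) is immediate: if $J^N$ annihilates $M/0^\rho_M$, then $J^N M \subset 0^\rho_M$, and since $\rho^{e_0}$ kills $0^\rho_M$, we get $\rho^{e_0}(J^N M) = 0$, so $(J^N M,\rho)$ is nilpotent.

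There is no genuine obstacle here; the statement is a formal rewriting of nilpotence in three equivalent languages (submodule containment in $0^\rho_M$, vanishing of twisted $p^e$-linear maps, and radical annihilation of the quotient by the nil submodule). The only subtlety worth highlighting in the write-up is that the finiteness of $e_0$ is what allows a \emph{uniform} $e$ to work in (b); without it, one would only get that each element $xm$ is killed by some unspecified iterate of $\rho$, which would force a weaker statement. The same $N$ can be used throughout, so the equivalences preserve the value of $N$ in (a) and (b), while (c) only asserts existence of such an $N$.
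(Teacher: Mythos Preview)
Your proof is correct and follows essentially the same approach as the paper: both arguments hinge on the identity $\rho^e(xm)=x^{p^e}\rho^e(m)$ together with the equality $\ker(\rho^{e_0})=0^\rho_M$ guaranteed by finiteness of the HSL number. The only difference is organizational---you run a cycle (a)$\Rightarrow$(b)$\Rightarrow$(c)$\Rightarrow$(a), whereas the paper shows all three conditions are simultaneously equivalent to $x\in\ann_R(M/0^\rho_M)$ for every $x\in J^N$---but the underlying logic is identical.
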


\begin{proof}
For any $x \in R$, the $R[F]$-module $(xM,\rho)$ is nilpotent if and only if, for all $m \in M$ 
\[
\rho^e(xm)=x^{p^e}\rho^e(m)=0
\] 
or, equivalently, if and only if $xm \in \ker(\rho^e)=0^\rho_M$ as $e\ge e_0$. Thus, $(xM,\rho)$ is nilpotent if and only if $x \in \ann_R(M/0^\rho_M)$. Then, $(J^NM,\rho)$ is nilpotent if and only if for all $x \in J^N$, $(xM,\rho)$ is nilpotent, which is equivalent to both $x^{p^e}\rho^e \colon M \rightarrow M$ being the zero map and $x \in \ann_R(M/0^\rho_M)$, which gives $J^N\subset \ann_R(M/0^\rho_M)$.
\end{proof}

We are mostly interested in studying $\gfdp_J(M)$ in the case that $(M,\rho)$ is either $R$ itself, a quotient or localization of $R$, or an ideal in the ring $R$, each of which we endow with an $R[F]$-module structure via the canonical Frobenius map $F$ (or its restriction). In these cases, the local cohomology modules of $M$ have finite Hartshorne-Speiser-Lyubeznik numbers (see \Cref{rmk: HSL numbers are finite}) and so the result above applies to the $R[F]$-module $(H^j_\mfm(M),\rho)$.

\begin{rmk}\label{rmk: gF-depth_J unifies F-depth and gF-depth}
The notion of generalized $F$-depth with respect to an ideal recovers both the $F$-depth of Lyubeznik \cite[Def.~4.1]{Lyu06} and the generalized $F$-depth of Maddox-Miller \cite[Def.~3.3]{MM} as special cases. In particular, if we let $J=R$, then $(R^NM,\rho)=(M,\rho)$ is nilpotent for some $N\ge 0$ if and only if $(M,\rho)$ is nilpotent; if $J=\mfm$, then $(\mfm^N M,\rho)$ is nilpotent for some $N\ge 0$ if and only if $(M,\rho)$ is generalized nilpotent.

Thus, the $F$-depth of an $R[F]$-module $(M,\rho)$ is the same as its generalized $F$-depth with respect to the unit ideal, and similarly its generalized $F$-depth coincides with its generalized $F$-depth with respect to the maximal ideal $\mfm\subset R$. Consequently, any result about generalized $F$-depth with respect to an arbitrary ideal $J$ also holds for $F$-depth and generalized $F$-depth. 
\end{rmk}

As for depth and $F$-depth, we will be interested in when this new invariant is maximal.

\begin{defn}\label{dff: gwFn wrt J}
Let $J\subset R$ be an ideal. We will say that $R$ is \textbf{generalized weakly $F$-nilpotent with respect to $J$} if $\gfdp_J (R)=\dim (R)$.
\end{defn}

As the local theory of depth is closely related to the annihilators of local cohomology modules, the local theory of (generalized) $F$-depth is tied to the colon ideals $0^F_{H^j_\mfm(M)}:_R H^j_\mfm(M)$. 

\begin{lemma}\label{lem: gF-depth_J observations} 
Let $J\subset R$ be an ideal, $(M,\rho)$ be a finitely generated $R[F]$-module with $\dim_R(M)=t$, and for any $0 \le j \le t$, set \[\mfb_j(M) = \sqrt{\ann_R\left(H^j_\mfm(M)/0^F_{H^j_\mfm(M)}\right)} \text{ and } \mfb(M) = \mfb_0(M)\cap \cdots\cap \mfb_{t-1}(M).\]
\begin{enumerate}[label=(\alph*)]
\item We have $\gfdp_J(M) < \infty$ if and only if $J\not\subset \mfb(M)\cap \mfb_t(M)$. In particular, if $J\subset\sqrt{0}$ then $\gfdp_J(M)=\infty$. 
\item If $I$ is an ideal contained in $J$, then $\gfdp_J(M) \le \gfdp_I(M)$, and equality holds if $\sqrt{I}=\sqrt{J}$. 
In particular, we have the following inequalities for any ideal $J\subset \mfm$. 
\[
\depth(M) \le \fdp(M) \le \gfdp(M) \le \gfdp_J(M)
\] 
\item If $J\neq R$, then $\gfdp_J(M) >0$. 
\item For any $k \ge 1$, if $J \subset \mfb_j(M)$ for $0 \le j < k$, then $\gfdp_J(M)\ge k$, in particular, $\gfdp_J(M) =k$ if and only if $J\subset \mfb_0(M)\cap \cdots\cap \mfb_{k-1}(M)$ and $J\not\subset \mfb_k(M)$. Thus, $R$ is generalized weakly $F$-nilpotent with respect to $J$ if and only if $J\subset \mfb(R)$. 
\item If $(R,\mfm)\rightarrow (S,\mfn)$ is a local homomorphism with $\sqrt{\mfm S}=\mfn$, then for any $S[F]$-module $(N,\rho')$ and any ideal $J\subset R$, we have $\gfdp_J (N)=\gfdp_{JS} (N)$ where we view $N$ as an $R[F]$-module via restriction of scalars.
\end{enumerate}
\end{lemma}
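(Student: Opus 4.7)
The linchpin is \Cref{lem: equivalent conditions for J^NM to be nilpotent}. Applied to the Artinian $R[F]$-module $(H^j_\mfm(M),F)$ (whose Hartshorne-Speiser-Lyubeznik number is finite by \Cref{rmk: HSL numbers are finite}), it states that $(J^N H^j_\mfm(M),F)$ is nilpotent for some $N$ if and only if $J \subset \mfb_j(M)$. Combined with the vanishing $H^j_\mfm(M) = 0$ for $j > t = \dim_R M$, which renders those modules trivially nilpotent, this yields the dictionary
\[
\gfdp_J(M) \ge k \iff J \subset \mfb_j(M) \text{ for all } 0 \le j < k.
\]
Part (d) is then an immediate restatement, and (a) is the contrapositive at the top: $\gfdp_J(M) < \infty$ is equivalent to $J \not\subset \mfb_j(M)$ for some $0 \le j \le t$, i.e., to $J \not\subset \mfb(M) \cap \mfb_t(M)$. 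For the second half of (a), every element of $\sqrt{0}$ is nilpotent in $R$ and hence annihilates every $R$-module up to a power, so $\sqrt{0} \subset \mfb_j(M)$ for every $j$, whence $J \subset \sqrt{0}$ forces $\gfdp_J(M) = \infty$.

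Part (b) is formal from the dictionary: if $I \subset J$ and $J \subset \mfb_j(M)$, then $I \subset \mfb_j(M)$, so $\gfdp_J(M) \le \gfdp_I(M)$, and equality when $\sqrt{I} = \sqrt{J}$ follows because each $\mfb_j(M)$ is radical. Specializing this monotonicity to the containments $\mfm \subset R$ and $J \subset \mfm$, together with \Cref{rmk: gF-depth_J unifies F-depth and gF-depth} (which identifies $\fdp = \gfdp_R$ and $\gfdp = \gfdp_\mfm$), yields
\[
\fdp(M) \le \gfdp(M) \le \gfdp_J(M);
\]
the inequality $\depth(M) \le \fdp(M)$ holds because the zero module is trivially nilpotent, so the defining set for $\fdp(M)$ is contained in the one for $\depth(M)$. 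For (c), $J \ne R$ forces $J \subset \mfm$ in a local ring, and since $H^0_\mfm(M) = \Gamma_\mfm(M)$ is killed by a power of $\mfm$, we have $J^N H^0_\mfm(M) \subset \mfm^N H^0_\mfm(M) = 0$ for $N \gg 0$, which is trivially nilpotent; hence $j = 0$ never enters the defining set and $\gfdp_J(M) > 0$.

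Finally, part (e) rests on the invariance of local cohomology under radical-preserving base change: $\sqrt{\mfm S} = \mfn$ gives $\Gamma_\mfm(\Arg) = \Gamma_\mfn(\Arg)$ as functors on $S$-modules, so $H^j_\mfm(N) = H^j_\mfn(N)$ canonically and compatibly with $\rho'$. Since the $J$-action on any $S$-module factors through $R \to S$ and coincides with the $JS$-action, we obtain $J^N H^j_\mfm(N) = (JS)^N H^j_\mfn(N)$ as $\rho'$-stable submodules, so their nilpotence conditions agree. The only subtleties to track are the asymmetric role of $\mfb_t(M)$ in (a), reflecting the distinguished status of the top local cohomology, and the direction reversal in (b) (smaller $I$ produces larger $\gfdp_I$); both are transparent from the dictionary displayed above but would be easy to misstate if argued directly from the original definitions.
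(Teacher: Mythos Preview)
Your proof is correct and follows essentially the same approach as the paper: both derive (a), (b), and (d) from \Cref{lem: equivalent conditions for J^NM to be nilpotent}, handle (c) via the $\mfm$-torsion and finite-generation of $H^0_\mfm(M)$, and establish (e) through the change-of-rings and radical-invariance properties of local cohomology. Your explicit ``dictionary'' $\gfdp_J(M)\ge k \iff J\subset\mfb_j(M)$ for $0\le j<k$ is a clean way to package what the paper leaves implicit, and the paper is in turn slightly more explicit than you are about verifying Frobenius compatibility of the isomorphism $H^j_\mfm(N)\simeq H^j_\mfn(N)$ in (e), but neither difference is substantive.
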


\begin{proof}
Claims (a) and (d) follow immediately from \Cref{lem: equivalent conditions for J^NM to be nilpotent}. Claim (b) is obvious, since each of the quantities involved measures a more generalized version of vanishing than the previous. Claim (c) is immediate from the definition of $\gfdp_J(R)$, as $H^0_\mfm(M)$ is finitely generated and $\mfm$-torsion. 

To see claim (e), notice that the change-of-rings isomorphism \cite[Prop. 7.15(2)]{ILL07} for local cohomology also holds in the category of $R[F]$-modules since local cohomology is the cohomology of the \v{C}ech complex and formation of this complex is compatible with restriction of scalars, which gives that $H^j_\mfm(N)$ is isomorphic to $H^j_{\mfm S}(N)$ as $R[F]$-modules. Further, since $\sqrt{\mfm S}=\mfn$, we know $H^j_{\mfm S}(N)$ is isomorphic to $H^j_{\mfn}(N)$ as $R$-modules. To show this isomorphism commutes with the induced Frobenius actions in each place, one uses the definition of local cohomology as the direct limit of Ext modules and functoriality of the Hom functor in the second factor with respect to Frobenius actions. Finally, since $R$ acts as its image under $S$, we see that $J^n H^j_\mfm(N)\simeq (J^nS)H^j_\mfn(N)$, which concludes the proof.
\end{proof}

We now demonstrate that generalized $F$-depth with respect to $J$ exhibits one of the most important and useful properties of depth, namely it satisfies expected inequalities over short exact sequences. Note that this is a strict improvement to \cite[Lem.~2.23]{MM}.

\begin{thm}\label{thm: F-depth lemma}
Let $J\subset R$ be an ideal, and suppose that we have a short exact sequence of $R[F]$-modules as below, where each term in the sequence has finite Hartshorne-Speiser-Lyubeznik number. \begin{center}
\begin{tikzcd}
0 \arrow{r} & (A,\rho_A) \arrow{r}& (B,\rho_B) \arrow{r} & (C,\rho_C) \arrow{r} & 0
\end{tikzcd}
\end{center} Then, we have the following inequalities, where we interpret $\infty\pm 1$ as $\infty$.

\begin{enumerate}
\item $\gfdp_J(B) \ge \min \{\gfdp_J(A),\gfdp_J(C)\}$
\item $\gfdp_J(A) \ge \min \{\gfdp_J(B),\gfdp_J(C)+1\}$
\item $\gfdp_J(C)\ge \min \{\gfdp_J(B),\gfdp_J(A)-1\}$
\end{enumerate}
Furthermore, if the sequence is split exact, then equality in $(1)$ is attained.
\end{thm}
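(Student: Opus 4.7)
The plan is to mimic the classical depth lemma argument, but work with the reformulation from \Cref{lem: equivalent conditions for J^NM to be nilpotent}: since every local cohomology module $H^j_\mfm(M)$ of a finitely generated $R[F]$-module has finite HSL number, ``$J^N H^j_\mfm(M)$ is nilpotent'' means precisely that $\rho^e(J^N H^j_\mfm(M)) = 0$ for a uniform pair $(N,e)$. Applying $H^\bullet_\mfm(-)$ to the given short exact sequence produces a long exact sequence of $R[F]$-modules whose connecting homomorphisms commute with $\rho$, so one can perform a diagram chase that tracks both ideal powers and HSL iterates simultaneously.

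For (1), fix $j < \min\{\gfdp_J(A),\gfdp_J(C)\}$ and choose witnessing pairs $(N_A,e_A)$ and $(N_C,e_C)$. Given $\beta \in H^j_\mfm(B)$ and a generator $u = zy \in J^{N_A+N_C}$ with $z\in J^{N_A}$, $y\in J^{N_C}$, I push $y\beta$ through $\pi\colon H^j_\mfm(B)\to H^j_\mfm(C)$: since $y\pi(\beta)\in J^{N_C}H^j_\mfm(C)$ is killed by $\rho^{e_C}$, we get $y^{p^{e_C}}\rho^{e_C}(\beta)=\iota(\alpha)$ for some $\alpha\in H^j_\mfm(A)$; multiplying by $z$ and applying $\rho^{e_A}$ yields $z^{p^{e_A}}y^{p^{e_A+e_C}}\rho^{e_A+e_C}(\beta)=0$, which is upgraded to the cleaner $u^{p^{e_A+e_C}}\rho^{e_A+e_C}(\beta)=0$ by supplying an extra factor $z^{p^{e_A+e_C}-p^{e_A}}$. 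Summing over generators of $J^{N_A+N_C}$ gives $\gfdp_J(B)>j$.

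Parts (2) and (3) follow from essentially the same two-step chase aimed at $A$ or $C$ instead of $B$. For (2), one first pushes $y\alpha$ into $J^{N_B}H^j_\mfm(B)$ via $\iota$ to place $y^{p^{e_B}}\rho^{e_B}(\alpha)$ into $\operatorname{im}\partial$, then multiplies by $z\in J^{N_C}$ and applies $\rho^{e_C}$ using nilpotence of $J^{N_C}H^{j-1}_\mfm(C)$; for (3), one uses $\partial$ and nilpotence of $J^{N_A}H^{j+1}_\mfm(A)$ first, then $\pi$ and nilpotence of $J^{N_B}H^j_\mfm(B)$. The same supply-missing-powers trick converts each mixed-exponent vanishing into a uniform one. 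The edge cases (e.g.\ $j=0$ in (2)) are either immediate via injectivity of the first map of the LES, or handled by the convention $\infty\pm 1=\infty$ together with part (c) of \Cref{lem: gF-depth_J observations}. For the split-exact case, the canonical isomorphism $H^j_\mfm(B)\cong H^j_\mfm(A)\oplus H^j_\mfm(C)$ is one of $R[F]$-modules, so $J^N H^j_\mfm(B)$ decomposes as a direct sum and \Cref{lem: nilpotent direct sum} upgrades the inequality in (1) to an equality.

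The main technical obstacle is keeping the chase \emph{uniform} in both $N$ and $e$: each individual element might be killed by its own $\rho^{e(\beta)}$, but one needs a single pair $(N,e)$ that works across the whole module. Finite HSL supplies such a uniform witness for each local cohomology module separately, and the ``supply missing powers'' bookkeeping above is what combines two HSL witnesses into one uniform witness in the end; no new ingredient beyond \Cref{lem: equivalent conditions for J^NM to be nilpotent} and the LES is required.
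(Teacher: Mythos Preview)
Your proposal is correct and follows essentially the same strategy as the paper: apply $H^\bullet_\mfm(-)$ to obtain the long exact sequence of $R[F]$-modules, then do a two-step diagram chase (kill the image in one neighbor, lift, kill in the other), using finite HSL numbers to guarantee uniformity, and finish with \Cref{lem: equivalent conditions for J^NM to be nilpotent}; the split case via \Cref{lem: nilpotent direct sum} is identical. The one notable streamlining in the paper is that it works directly with a single $x\in J^N$ and the $p^e$-linear map $x^{p^e}\rho^e$ (part (b) of \Cref{lem: equivalent conditions for J^NM to be nilpotent}), taking $N$ and $e$ large enough to witness nilpotence for both $A$ and $C$ simultaneously---this makes your product decomposition $u=zy$ and the ``supply missing powers'' bookkeeping unnecessary, and the chase collapses to the observation that $x^{p^{2e}}\rho_B^{2e}=0$.
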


\begin{proof}
We will prove $(1)$, as the proofs of the other inequalities are similar. We first assume that $\gfdp_J(A)$ and $\gfdp_J(C)$ are finite. Let $t= \min\{\gfdp_J(A),\gfdp_J(C)\}$. Thus, for any $N$ and any $x \in J^N$, $0 \le j < t$, and $e \ge \max\{\hsl(H^j_\mfm(A)),\hsl(H^j_\mfm(C))\}$, we get the following commutative diagram of $R$-modules, where the horizontal maps are $R$-linear and the vertical maps are $p^e$-linear

\begin{center}
    \begin{tikzcd}
        \cdots \arrow{r} & H^j_\mfm(A) \arrow{d}{x^{p^e} \rho^e_A} \arrow{r}{\alpha} & H^j_\mfm(B) \arrow{r}{\beta}  \arrow{d}{x^{p^e} \rho^e_B} & H^j_\mfm(C)\arrow{r}  \arrow{d}{x^{p^e} \rho^e_C} & H^{j+1}_\mfm(A)\arrow{r} \arrow{d}{x^{p^e}\rho^e_A} & \cdots \\
    \cdots \arrow{r} & H^j_\mfm(A) \arrow{r}{\alpha} & H^j_\mfm(B) \arrow{r}{\beta} & H^j_\mfm(C)\arrow{r} & H^{j+1}_\mfm(A)\arrow{r} & \cdots .
    \end{tikzcd}
\end{center}

Then, $x^{p^e}\rho^e_C = 0$, so $\beta \circ x^{p^e}\rho^e_B = 0$. Hence, the image of $x^{p^e}\rho^e_B$ is in the image of $\alpha$, but the image of $\alpha$ vanishes under $x^{p^e}\rho^e_B$ since $x^{p^e}\rho^e_A=0$. This shows $x^{p^{2e}} \rho^{2e}_B$ vanishes, and as $x\in J^N$ was arbitrary, we may apply \Cref{lem: equivalent conditions for J^NM to be nilpotent} to conclude $J^{N'} H^j_\mfm(B)$ is nilpotent for some $N' \gg 0$. As this holds for all $0 \le j < k$, we have the required inequality.

If at least one of $\gfdp_J(A)$ or $\gfdp_J(C)$ is infinite, say $\gfdp_J(A)$, then $x^{p^e}\rho_A^e:H^j_\mfm(A)\rightarrow H^j_\mfm(A)$ vanishes for all $j \in \mbn$. From the commutative diagram above, it is clear that if $e\gg 0$, the vanishing of $x^{p^e}\rho^e_B$ is equivalent to the vanishing of $x^{p^e}\rho^e_C$. This justifies the infinite arithmetic.

Finally, in the case that the sequence is split, by \Cref{lem: nilpotent direct sum} we get that $J^NH^j_\mfm(A\oplus C) = J^NH^j_\mfm(A) \oplus J^NH^j_\mfm(C)$ is nilpotent if and only if $J^NH^j_\mfm(A)$ and $J^NH^j_\mfm(C)$ are nilpotent, which demonstrates the required equality.
\end{proof}

For any $R[F]$-module $(M,\rho)$ and any ideal $I\subset R$, the submodule $IM\subset M$ is  $\rho$-stable, since $\rho(IM)\subset I^{\fbp{p}} \rho(M) \subset IM$. We now demonstrate that we can check its generalized $F$-depth with respect to $J$ up to radical.

\begin{lemma}\label{lem: checking gfdp_J up to radical}
Let $(M,\rho)$ be an $R[F]$-module and let $N\subset M$ be $\rho$-stable, and endow $M/N$ with the natural action $\overline{\rho}$ given in \Cref{dff: quotient R[F]-structure}. We then have \[ \gfdp_J(M/N,\bar{\rho}) = \gfdp_J(M/N_M^\rho,\bar{\rho}).\] Hence, for any ideal $I \subset R$, we have $\gfdp_J (M/IM) = \gfdp_J (M/\sqrt{I}M) $.
\end{lemma}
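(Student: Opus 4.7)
My plan is to construct a short exact sequence whose leftmost term has trivial contribution to $\gfdp_J$, then apply the $F$-depth lemma. Since $N \subseteq N_M^\rho$ by definition, we obtain an $R[F]$-module short exact sequence
\[
0 \to N_M^\rho/N \to M/N \to M/N_M^\rho \to 0,
\]
where $K := N_M^\rho/N$ is pointwise $\bar\rho$-nilpotent: for each $m \in N_M^\rho$, the definition of the orbit closure provides some $e$ with $\rho^e(m) \in N$, so $\bar\rho^e(m+N) = 0$.

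The first substantive step is to show $\gfdp_J(K) = \infty$, which reduces to showing that $H^j_\mfm(K)$ is entirely $\rho$-nilpotent for every $j$. I would use the \v{C}ech complex on a generating set for $\mfm$: a cohomology class is represented by a finite tuple of fractions $k/s$ with $k \in K$, and each such fraction satisfies $\bar\rho^e(k/s) = \bar\rho^e(k)/s^{p^e} = 0$ for some $e$ depending only on $k$. Taking the maximum of the iterates across the finitely many tuple entries yields a uniform power annihilating the cohomology class, so $H^j_\mfm(K) \subseteq 0^\rho_{H^j_\mfm(K)}$. Hence $J^N H^j_\mfm(K)$ is $\rho$-nilpotent for every $N$ and $j$, giving $\gfdp_J(K) = \infty$. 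In the typical setting where $M$ is finitely generated, $K$ has finite Hartshorne--Speiser--Lyubeznik number directly, ensuring the hypotheses of \Cref{thm: F-depth lemma} are satisfied.

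With $\gfdp_J(K) = \infty$ in hand, parts (1) and (3) of \Cref{thm: F-depth lemma} applied to the short exact sequence yield
\[
\gfdp_J(M/N) \ge \min\{\gfdp_J(K), \gfdp_J(M/N_M^\rho)\} = \gfdp_J(M/N_M^\rho)
\]
and
\[
\gfdp_J(M/N_M^\rho) \ge \min\{\gfdp_J(M/N), \gfdp_J(K)-1\} = \gfdp_J(M/N),
\]
giving the desired equality.

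For the concluding assertion, I would verify that $(IM)_M^\rho = (\sqrt{I}M)_M^\rho$: the inclusion $\subseteq$ is immediate from $IM \subseteq \sqrt{I}M$, and for the reverse, any $y = \sum a_i m_i \in \sqrt{I}M$ with $a_i^k \in I$ for all $i$ satisfies $\rho^e(y) = \sum a_i^{p^e}\rho^e(m_i) \in IM$ for $p^e \ge k$, so $\sqrt{I}M \subseteq (IM)_M^\rho$, and then idempotence of orbit closure forces $(\sqrt{I}M)_M^\rho \subseteq (IM)_M^\rho$. Applying the main equality twice then yields
\[
\gfdp_J(M/IM) = \gfdp_J(M/(IM)_M^\rho) = \gfdp_J(M/(\sqrt{I}M)_M^\rho) = \gfdp_J(M/\sqrt{I}M).
\]
The main obstacle is passing from pointwise $\bar\rho$-nilpotence of $K$ to pointwise $\rho$-nilpotence of its local cohomology modules; the \v{C}ech argument handles this cleanly and simultaneously ensures the finite-HSL hypothesis required by \Cref{thm: F-depth lemma}.
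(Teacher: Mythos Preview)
Your proposal is correct and follows essentially the same route as the paper: both exploit the short exact sequence $0\to N^\rho_M/N\to M/N\to M/N^\rho_M\to 0$, observe that the left-hand term is nilpotent (so its $\gfdp_J$ is infinite), and conclude via \Cref{thm: F-depth lemma}; the second assertion is then handled in both by showing $(IM)^\rho_M=(\sqrt{I}M)^\rho_M$ through the containment $\sqrt{I}M\subset (IM)^\rho_M$ and idempotence of the closure operator. Your version is simply more explicit, spelling out the \v{C}ech-complex reason that nilpotence of $K$ propagates to $H^j_\mfm(K)$ and flagging the finite-HSL hypothesis needed to invoke \Cref{thm: F-depth lemma}, whereas the paper compresses all of this into the single clause ``$\fdp(N^\rho_M/N)=\infty$, which gives the first equality.''
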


\begin{proof}
Since $N^\rho_M/N$ is nilpotent in $M/N$, we have $\fdp (N^\rho_M/N) = \infty$, which gives the first equality. To prove the second, notice that $IM\subset\sqrt{I}M\subset (IM)^\rho_M$, as for any $x \in \sqrt{I}$ and $m \in M$, $x^{p^e}\in I$ for some $e\in \NN$, whence $\rho^e(xm) = x^{p^e}\rho^e(m)\in IM$. By applying $\bullet^\rho_M$ to the chain of containments above, we then deduce $(IM)^\rho_M =(\sqrt{I}M)^\rho_M$ . 
\end{proof}

As for depth, the generalized $F$-depth with respect to an ideal is unaffected by completion, and is even better behaved with respect to going modulo the nilradical than ordinary depth. This was shown by Lyubeznik for $F$-depth, see \cite[Props.~4.4 and 4.5]{Lyu06}.

\begin{thm}\label{thm: gfdp and reduction + flat local extension}
Let $J\subset R$ be an ideal, and let $\widehat{R}$ be the completion of $R$ at its maximal ideal and $R_{\text{red}}=R/\sqrt{0}$ be its reduction. \begin{enumerate}
 \item Unlike ordinary depth, $\gfdp_J(R)=\gfdp_{JR_{\text{red}}}(R_{\text{red}})$. 
\item If $\phi \colon (R,\mfm) \rightarrow (S,\mfn)$ is a flat local extension with $\sqrt{\mfm S}=\mfn$, then we have $$\gfdp_J(R)=\gfdp_{JS}(S).$$ In particular, $\gfdp_J(R) = \gfdp_{J\widehat{R}}(\widehat{R})$. 
\end{enumerate}
\end{thm}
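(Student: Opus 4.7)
The plan is to treat the two parts separately, with each reducing to a combination of \Cref{thm: F-depth lemma}, \Cref{lem: gF-depth_J observations}(e), and flat base change for local cohomology.

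For (1), I will use the canonical short exact sequence of $R[F]$-modules
\[
0 \to \sqrt{0} \to R \to R_{\text{red}} \to 0,
\]
where the Frobenius on $\sqrt{0}$ is the restriction of that on $R$. Since $R$ is Noetherian, $\sqrt{0}$ is a finitely generated ideal, so $(\sqrt{0})^N = 0$ for some $N$; choosing $e$ with $p^e \geq N$ then gives $F^e \equiv 0$ on $\sqrt{0}$, i.e., $\sqrt{0}$ is a nilpotent $R[F]$-module. This nilpotence descends to each local cohomology module $H^j_\mfm(\sqrt{0})$ (since $F^e$ on cohomology is induced by $F^e$ on the coefficient module), so $\gfdp_J(\sqrt{0}) = \infty$. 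Applying parts (1) and (3) of \Cref{thm: F-depth lemma} then forces $\gfdp_J(R) = \gfdp_J(R_{\text{red}})$. Finally, the local surjection $R \twoheadrightarrow R_{\text{red}}$ satisfies $\sqrt{\mfm R_{\text{red}}} = \mfm R_{\text{red}}$ (the maximal ideal of a local ring is always radical), so \Cref{lem: gF-depth_J observations}(e) converts the right-hand side into $\gfdp_{J R_{\text{red}}}(R_{\text{red}})$.

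For (2), \Cref{lem: gF-depth_J observations}(e) applied to $\phi$ immediately yields $\gfdp_J(S) = \gfdp_{JS}(S)$ where $S$ is viewed as an $R[F]$-module by restriction of scalars. It therefore suffices to prove $\gfdp_J(R) = \gfdp_J(S)$. The main ingredient is flat base change for local cohomology: by flatness of $\phi$ and the hypothesis $\sqrt{\mfm S} = \mfn$, there is a natural isomorphism
\[
H^j_\mfn(S) \cong H^j_{\mfm S}(S) \cong H^j_\mfm(R) \otimes_R S.
\]
After verifying that this isomorphism identifies $F_S$ on the left with the diagonal $p$-linear map $F_R \otimes F_S$ on the tensor product, the forward implication becomes a direct calculation on simple tensors: vanishing of $F_R^e$ on $J^N H^j_\mfm(R)$ forces vanishing of $(F_R \otimes F_S)^e$ on $J^N(H^j_\mfm(R) \otimes_R S)$, since $(F_R \otimes F_S)^e(x m \otimes s) = F_R^e(xm)\otimes F_S^e(s) = 0$ for $x \in J^N$. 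For the reverse, I use that any flat local homomorphism is automatically faithfully flat, so $m \mapsto m \otimes 1$ embeds $H^j_\mfm(R)$ as an $R[F]$-submodule of $H^j_\mfn(S)$, and nilpotence of $J^N H^j_\mfn(S)$ restricts to nilpotence of $J^N H^j_\mfm(R)$. The final statement about completion follows by taking $S = \widehat{R}$, which satisfies both hypotheses.

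The main obstacle I expect is confirming that the flat-base-change isomorphism is truly one of $R[F]$-modules, not merely of $R$-modules. Morally this is clear from the functoriality of local cohomology with respect to Frobenius (as in \Cref{xmp: lc has frob action}), and can be made explicit by working with \v{C}ech-complex representatives: both the coboundary maps and the canonical isomorphism $C^\bullet(\ux; R) \otimes_R S \cong C^\bullet(\phi(\ux); S)$ are Frobenius-equivariant, so the induced Frobenius on $H^j_\mfm(R) \otimes_R S$ agrees with $F_R \otimes F_S$. Once this compatibility is secured, the remaining arguments are straightforward analogues of the computations in \Cref{lem: equivalent conditions for J^NM to be nilpotent}.
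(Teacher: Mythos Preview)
Your proposal is correct and follows essentially the same approach as the paper. For (1) you unpack directly what the paper obtains by citing \Cref{lem: checking gfdp_J up to radical} (whose proof is exactly your nilpotence-of-$\sqrt{0}$ argument), and for (2) both you and the paper argue via faithful flatness and Frobenius-compatible base change for local cohomology, with the only cosmetic difference that the paper phrases the vanishing condition via the maps $x^{p^e}F^e$ of \Cref{lem: equivalent conditions for J^NM to be nilpotent} rather than via nilpotence of $J^N H^j_\mfm(R)$.
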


\begin{proof}
For $(1)$, we first see that $\gfdp_J (R) = \gfdp_{JR_{\text{red}}} (R_{\text{red}})$ from \Cref{lem: checking gfdp_J up to radical,lem: gF-depth_J observations}.

For $(2)$, since $S$ is faithfully flat over $R$, the $p^e$-linear map $x^{p^e} F^e:H^j_\mfm(R)\rightarrow H^j_\mfm(R)$ vanishes if and only if it vanishes after applying $\bullet \otimes_R S$, from which we obtain the $p^e$-linear map $\phi(x)^{p^e} F^e \colon H^j_{\mfm S}(S) \rightarrow H^j_{\mfm S}(S)$. However, $H^j_{\mfm S}(S) \simeq H^j_\mfn(S)$ as $S[F]$-modules, so we see that $$\gfdp_J(R)=\gfdp_J(S) = \gfdp_{JS}(S),$$ concluding the proof.
\end{proof}

It is also natural to ask how $\gfdp_J(R)$ behaves locally. The following local duality argument explicates the relationship between the annihilators $\mfb_j(R)$ for $0 \le j \le \dim(R)$ and $\mfb_i(R_\mfp)$ for $0 \le i \le \on{ht}(\mfp)$. Let $\bullet^{\vee_\fp}$ be the usual Matlis duality functor over the local ring $R_\fp$, and $\bullet^\vee$ denote $\bullet^{\vee_\fm}$ if $(R,\fm)$ is a local ring.

\begin{lemma}\label{lem: global-to-local vanishing for Frobenius actions}
Suppose $R$ is complete, $F$-finite, and equidimensional, and for $\mfp \in \spec(R)$ let $\mcw_\mfp$ be the composite functor $\mcw_\mfp(\bullet) = (\bullet^\vee \otimes_R R_\mfp)^{\vee_\mfp}$, viewed as a functor from the category of artinian $R[F]$-modules to the category of artinian $R_\mfp[F]$-modules. For any $x \in R$, and $e \gg 0$ we have that $x^{p^e}F^e \colon H^j_\mfm(R)\rightarrow H^j_\mfm(R)$ vanishes under $\mcw_\mfp$ if and only if $\frac{x}{1} \in \mfb_{j-d+\on{ht}(\mfp)}(R_\mfp)$. In particular, $\mfb_j(R)R_\mfp = \mfb_{j-d+\on{ht}(\mfp)}(R_\mfp)$. 
\end{lemma}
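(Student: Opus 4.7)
The plan is to apply Grothendieck local duality, both over $R$ and over $R_\mfp$, in order to translate the vanishing question on $H^j_\mfm(R)$ into an analogous vanishing question on $H^{j-d+\on{ht}(\mfp)}_{\mfp R_\mfp}(R_\mfp)$. Since $R$ is complete, $F$-finite, and equidimensional, it admits a normalized dualizing complex $\omega_R^\bullet$, and local duality over $R$ supplies the natural isomorphism $H^j_\mfm(R)^\vee \cong H^{-j}(\omega_R^\bullet)$. By equidimensionality together with the catenarity automatic for a complete local ring, one has the standard compatibility $(\omega_R^\bullet)_\mfp \cong \omega_{R_\mfp}^\bullet[d - \on{ht}(\mfp)]$, so localizing and relabeling the shift yields
\[
\bigl(H^j_\mfm(R)^\vee\bigr)_\mfp \cong H^{-j+d-\on{ht}(\mfp)}(\omega_{R_\mfp}^\bullet).
\]
Applying local duality over $R_\mfp$ in the other direction then identifies this with $H^{j-d+\on{ht}(\mfp)}_{\mfp R_\mfp}(R_\mfp)^{\vee_\mfp}$, and taking $\vee_\mfp$ once more (using Matlis reflexivity for artinian modules) produces the isomorphism
\[
\mcw_\mfp\bigl(H^j_\mfm(R)\bigr) \cong H^{j-d+\on{ht}(\mfp)}_{\mfp R_\mfp}(R_\mfp)
\]
of $R_\mfp$-modules.

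The second step is to upgrade this to an isomorphism of $R_\mfp[F]$-modules. The canonical Frobenius on $H^j_\mfm(R)$ corresponds under Matlis duality to a Cartier-type $R$-linear structural map on $H^j_\mfm(R)^\vee \cong H^{-j}(\omega_R^\bullet)$; the $F$-finiteness hypothesis ensures that Matlis duality intertwines with the pushforward $F_*$, and that this structural map localizes at $\mfp$ to the corresponding map on $H^{-j+d-\on{ht}(\mfp)}(\omega_{R_\mfp}^\bullet)$, which in turn Matlis dualizes to the canonical Frobenius on $H^{j-d+\on{ht}(\mfp)}_{\mfp R_\mfp}(R_\mfp)$. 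Under this $R_\mfp[F]$-module isomorphism, multiplication by $x$ on the left corresponds to multiplication by $x/1$ on the right, so $\mcw_\mfp(x^{p^e}F^e)$ is identified with the $p^e$-linear map $(x/1)^{p^e}F^e$ acting on $H^{j-d+\on{ht}(\mfp)}_{\mfp R_\mfp}(R_\mfp)$.

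Having transported the question, the equivalence follows by the argument from \Cref{lem: equivalent conditions for J^NM to be nilpotent}: writing $N = H^{j-d+\on{ht}(\mfp)}_{\mfp R_\mfp}(R_\mfp)$ with $e_0 = \hsl(N) < \infty$, for all $e \geq e_0$ the vanishing of $(x/1)^{p^e}F^e$ on $N$ is equivalent to $(x/1) \in \ann_{R_\mfp}(N/0^F_N)$, whose radical is, by definition, $\mfb_{j - d + \on{ht}(\mfp)}(R_\mfp)$. For the ``in particular'' equality of ideals, the $R_\mfp[F]$-module isomorphism descends to an isomorphism of quotients $\mcw_\mfp(H^j_\mfm(R))/0^F \cong N/0^F_N$, and since the Matlis dual of $N/0^F_N$ is a finitely generated $R_\mfp$-module whose annihilator commutes with localization, one concludes $\ann_R(H^j_\mfm(R)/0^F)_\mfp = \ann_{R_\mfp}(N/0^F_N)$; passing to radicals then yields $\mfb_j(R)R_\mfp = \mfb_{j-d+\on{ht}(\mfp)}(R_\mfp)$.

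The principal technical obstacle is the second step: verifying that the isomorphism of $R_\mfp$-modules above is compatible with the Frobenius structures on both sides. This requires tracking the Cartier structure on $\omega_R^\bullet$ through localization, and the $F$-finiteness hypothesis is essential here in guaranteeing that Matlis duality and the pushforward $F_*$ interact naturally, so that the Frobenius action on the dualizing complex of $R$ restricts to the Frobenius action on the dualizing complex of $R_\mfp$ under the localization map.
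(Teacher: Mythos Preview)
Your argument is correct and follows essentially the same route as the paper's proof. The paper is far more terse: it simply cites \cite[Lem.~5.1]{BB11} and \cite[Prop.~3.1]{KMPS} to assert that $\mcw_\mfp$ carries the map $x^{p^e}F^e$ on $H^j_\mfm(R)$ to the map $(x/1)^{p^e}F^e$ on $H^{j-d+\on{ht}(\mfp)}_{\mfp R_\mfp}(R_\mfp)$, and then observes that Matlis duality preserves annihilators. You have unpacked precisely the content of those references, namely the local duality identification combined with the Cartier module localization argument under $F$-finiteness, and your deduction of the ``in particular'' statement via finitely generated modules and preservation of annihilators is the same mechanism the paper invokes in its one-line appeal to $\bullet^\vee$ and $\bullet^{\vee_\mfp}$. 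One small point worth making explicit: your claim that the quotient isomorphism $\mcw_\mfp(H^j_\mfm(R))/0^F \cong N/0^F_N$ matches $\mcw_\mfp(H^j_\mfm(R)/0^F)$ relies on the exactness of $\mcw_\mfp$ together with the fact that $\mcw_\mfp$ sends the injective Frobenius on $H^j_\mfm(R)/0^F$ to an injective Frobenius (equivalently, the dual Cartier map stays surjective after localization); this is true and implicit in the Cartier module framework you invoke, but is the one step that deserves a word of justification.
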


\begin{proof}
Setting $w(j)=j-(d-\on{ht}(\mfp))$, under $\mcw_\mfp$ the $p^e$-linear map $x^{p^e}F^e \colon H^j_\mfm(R)\rightarrow H^j_\mfm(R)$ is sent to the $p^e$-linear map $\frac{x^{p^e}}{1}F^e\colon H^{w(j)}_{\mfp R_\mfp}(R_\mfp) \rightarrow H^{w(j)}_{\mfp R_\mfp}(R_\mfp)$ which follows by \cite[Lem.~5.1]{BB11}, \cite[Prop.~3.1]{KMPS}. Since $\bullet^\vee$ and $\bullet^{\vee_\mfp}$ preserve annihilators, the result follows. 
\end{proof}

As with any depth theory, one expects $\gfdp_J(R)$ to be bounded by the dimension of $R$; we show that this inequality is indeed satisfied under mild hypotheses, for example, if $R$ is an $F$-finite domain. To prove this, we use the fact that for any local ring $R$, the top local cohomology module of $R$ is not nilpotent, see \cite[Lem.~4.2]{Lyu06}. 

\begin{thm}\label{thm: upper bound on gfdp}
If $R$ is $F$-finite and equidimensional, then $\gfdp_J(R)> \dim(R)$ if and only if $J\subset\sqrt{0}$. 
\end{thm}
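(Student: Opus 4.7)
The reverse direction is trivial: if $J \subset \sqrt{0}$, then by Noetherianity $J^N = 0$ for some $N$, so $J^N H^j_\mfm(R) = 0$ is nilpotent for every $j$, whence $\gfdp_J(R) = \infty$.

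For the forward direction, suppose $\gfdp_J(R) > d := \dim R$; by \Cref{lem: gF-depth_J observations}(d) this is equivalent to $J \subset \mfb_j(R)$ for all $0 \le j \le d$, and in particular $J \subset \mfb_d(R)$. The plan is to localize at minimal primes to exploit the fact that the top local cohomology of a local ring is never nilpotent \cite[Lem.~4.2]{Lyu06}. Since \Cref{lem: global-to-local vanishing for Frobenius actions} requires a complete base ring, we first invoke \Cref{thm: gfdp and reduction + flat local extension}(2) to replace $R$ by its completion $\widehat R$, noting that $F$-finiteness and equidimensionality are preserved under $\mfm$-adic completion of an $F$-finite local ring (the latter because such rings are excellent).

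Now assume $R$ is complete, $F$-finite, and equidimensional. For any minimal prime $\mfp \subset R$, equidimensionality forces $\on{ht}(\mfp) = 0$, and \Cref{lem: global-to-local vanishing for Frobenius actions} yields
\[
JR_\mfp \subset \mfb_d(R) R_\mfp = \mfb_0(R_\mfp).
\]
Since $R_\mfp$ is a zero-dimensional local ring, $H^0_{\mfp R_\mfp}(R_\mfp) = R_\mfp$, and
\[
0^F_{R_\mfp} = \{x \in R_\mfp \mid x^{p^e} = 0 \text{ for some } e\} = \sqrt{0_{R_\mfp}} = \mfp R_\mfp.
\]
Hence $R_\mfp/0^F_{R_\mfp}$ is the residue field $\kappa(\mfp)$, so $\mfb_0(R_\mfp) = \sqrt{\ann_{R_\mfp}\kappa(\mfp)} = \mfp R_\mfp$. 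Therefore $JR_\mfp \subset \mfp R_\mfp$ for every minimal prime $\mfp$; contracting back to $R$ gives $J \subset \mfp$, and intersecting over all minimal primes yields $J \subset \sqrt{0}$. The main obstacle in this argument is installing the completeness hypothesis cleanly so that \Cref{lem: global-to-local vanishing for Frobenius actions} is applicable; once that is secured, the computation of $\mfb_0$ of a zero-dimensional local ring is elementary.
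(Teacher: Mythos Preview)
Your proof is correct and follows the paper's strategy: reduce to the complete case (preserving equidimensionality via excellence of $F$-finite rings), then apply \Cref{lem: global-to-local vanishing for Frobenius actions} to conclude $J\subset\mfb_d(R)\subset\mfp$ for the relevant primes. The only difference is that the paper localizes at \emph{all} primes $\mfp$ and invokes \cite[Lem.~4.2]{Lyu06} to see $\mfb_{\on{ht}(\mfp)}(R_\mfp)$ is proper, whereas you localize only at minimal primes and compute $\mfb_0(R_\mfp)=\mfp R_\mfp$ by hand---a mild simplification that replaces the general non-nilpotence of top local cohomology by its trivial zero-dimensional instance.
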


\begin{proof}
We first show we can reduce to the complete case to apply \Cref{lem: global-to-local vanishing for Frobenius actions}. Notice that $\widehat{R}$ is still $F$-finite, and $\widehat{R}$ is equidimensional since $R$ is universally catenary. As $\dim(R)=\dim(\widehat{R})$ and $\gfdp_J(R)=\gfdp_{J\widehat{R}}(\widehat{R})$, $\dim(R)<\gfdp_J(R)$ if and only if $\dim(\widehat{R})<\gfdp_{J\widehat{R}}(\widehat{R})$. 

By \Cref{lem: global-to-local vanishing for Frobenius actions} it now follows that $\mfb_d(R)R_\mfp = \mfb_{\on{ht}(\mfp)}(R_\mfp)$; also, $\mfb_{\on{ht}(\mfp)}(R_\mfp)$ is a proper ideal since $H^{\on{ht}(\mfp)}_{\mfp R_\mfp}(R_\mfp)$ is not nilpotent. This shows $\mfb_d(R)\subset \mfp$ for all primes $\mfp \in \spec(R)$, which happens if and only if $\mfb_d(R)\subset \sqrt{0}$. \end{proof}

We now show that $\gfdp_J(R)$ can be calculated from data contained in the Zariski open set $D(J)=\{\mfp \in \spec(R) \mid J\not\subset \mfp\}$. Our result mirrors  \cite[Lem.~2.1]{HM94} for generalized depth and improves \cite[Prop.~4.6]{KMPS} for generalized weakly $F$-nilpotent rings. 

\begin{thm}\label{thm: can check gfdp_J(R) locally on D(J)}
Suppose $R$ is complete, $F$-finite, and equidimensional. For an ideal $J\subset R$ \[
\gfdp_J (R) = \inf\{\fdp (R_\mfp) + \dim(R/\mfp) \mid \mfp \in D(J)\}.
\] In particular, $R$ is generalized weakly $F$-nilpotent with respect to $J$ if and only if $D(J)$ is inside the weakly $F$-nilpotent locus of $R$.
\end{thm}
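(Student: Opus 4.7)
The plan is to translate both sides of the claimed equality into statements about the radical annihilator ideals $\mfb_j(R)$ from \Cref{lem: gF-depth_J observations}, then use the global-to-local identification in \Cref{lem: global-to-local vanishing for Frobenius actions} to move between $\mfb_j(R)$ and $\mfb_i(R_\mfp)$. Concretely, by \Cref{lem: gF-depth_J observations}(d), $\gfdp_J(R)$ is the infimum of those $j$ with $J\not\subset\mfb_j(R)$. Since $\mfb_j(R)$ is radical, $J\not\subset\mfb_j(R)$ is equivalent to the existence of a prime $\mfp$ containing $\mfb_j(R)$ with $\mfp\in D(J)$; in turn this says $\mfb_j(R)R_\mfp$ is a proper ideal of the local ring $R_\mfp$.

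For such $\mfp$, \Cref{lem: global-to-local vanishing for Frobenius actions} gives the key identification
\[
\mfb_j(R)R_\mfp \;=\; \mfb_{j-d+\on{ht}(\mfp)}(R_\mfp).
\]
Because $R$ is complete (hence catenary) and equidimensional, the dimension formula $d=\on{ht}(\mfp)+\dim(R/\mfp)$ holds for every $\mfp$, so the local index rewrites as $j-\dim(R/\mfp)$. By the very definition of $\fdp(R_\mfp)$, the ideal $\mfb_i(R_\mfp)$ is proper exactly when $i\ge\fdp(R_\mfp)$ (with the convention that $\mfb_i(R_\mfp)=R_\mfp$ for $i<0$, since lower local cohomology vanishes). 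Therefore $\mfb_j(R)\subset\mfp$ if and only if $j\ge\fdp(R_\mfp)+\dim(R/\mfp)$, and combining with the reformulation of $\gfdp_J(R)$ above yields
\[
\gfdp_J(R)=\inf\{\,j \mid \exists\, \mfp\in D(J) \text{ with } j\ge\fdp(R_\mfp)+\dim(R/\mfp)\,\}=\inf_{\mfp\in D(J)}\bigl(\fdp(R_\mfp)+\dim(R/\mfp)\bigr),
\]
which is the asserted formula.

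For the ``in particular'' claim, note that $R$ is generalized weakly $F$-nilpotent with respect to $J$ means $\gfdp_J(R)=d$; combined with \Cref{thm: upper bound on gfdp} (which gives $\gfdp_J(R)\le d$ whenever $J\not\subset\sqrt{0}$), the main formula turns this into the condition $\fdp(R_\mfp)+\dim(R/\mfp)\ge d=\on{ht}(\mfp)+\dim(R/\mfp)$ for every $\mfp\in D(J)$, i.e., $\fdp(R_\mfp)\ge\on{ht}(\mfp)$. Since $\fdp(R_\mfp)\le\dim R_\mfp=\on{ht}(\mfp)$ always, equality must hold, which is precisely the statement that $R_\mfp$ is weakly $F$-nilpotent for every $\mfp\in D(J)$. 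I expect the only genuine subtlety to be the careful bookkeeping when $j-\dim(R/\mfp)$ is negative or larger than $\on{ht}(\mfp)$; once one checks $\mfb_i(R_\mfp)=R_\mfp$ outside the relevant range, the proof is a clean reindexing built on \Cref{lem: global-to-local vanishing for Frobenius actions} and the equidimensional--catenary dimension formula.
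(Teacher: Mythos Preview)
Your approach is essentially the paper's: both pivot on \Cref{lem: global-to-local vanishing for Frobenius actions} to identify $\mfb_j(R)R_\mfp$ with $\mfb_{j-\dim(R/\mfp)}(R_\mfp)$ and then read off the answer via the characterization of $\gfdp_J$ in \Cref{lem: gF-depth_J observations}(d); the paper splits this into two inequalities while you compute the infimum in one stroke.

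One correction is needed: your claim that ``$\mfb_i(R_\mfp)$ is proper exactly when $i\ge\fdp(R_\mfp)$'' is false in general, since $H^i_{\mfp R_\mfp}(R_\mfp)$ may well be nilpotent (or zero) for some $i>\fdp(R_\mfp)$, forcing $\mfb_i(R_\mfp)=R_\mfp$ there. Consequently the biconditional ``$\mfb_j(R)\subset\mfp$ if and only if $j\ge\fdp(R_\mfp)+\dim(R/\mfp)$'' is also incorrect as stated. What \emph{is} true, and all your infimum computation actually requires, is that $\mfb_i(R_\mfp)=R_\mfp$ for $i<\fdp(R_\mfp)$ while $\mfb_{\fdp(R_\mfp)}(R_\mfp)$ is proper; hence $\min\{j:\mfb_j(R)\subset\mfp\}=\fdp(R_\mfp)+\dim(R/\mfp)$ for each $\mfp$, and taking the infimum over $\mfp\in D(J)$ finishes the argument exactly as you wrote.
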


\begin{proof}
First, as $J\subset \sqrt{0}$ if and only if $D(J)=\varnothing$, and as the infimum over the empty set is $\infty$, the conclusion follows from \Cref{thm: upper bound on gfdp} if $J\subset \sqrt{0}$. So, we may assume $J\not\subset \sqrt{0}$, and adopt the notation in \Cref{lem: global-to-local vanishing for Frobenius actions}. For convenience, we write $$s(J) := \inf\{\fdp (R_\mfp) + \dim(R/\mfp) \mid \mfp \in D(J)\}.$$

Fix a prime $\mfp \in D(J)$ and let $t=\gfdp_J(R)$. By \Cref{lem: gF-depth_J observations}, we know there is an $N\gg 0$ such that $J^N\subset \mfb_j(R)$ for all $0 \le j < t$. Consequently $J^N R_\mfp\subset\mfb_i(R_\mfp)$ for $0 \le i < t-\dim(R/\mfp)$; however, as $J^N\not\subset\mfp$, this implies that $\mfb_i(R_\mfp)=R_\mfp$ for all $0\le i < t-\dim(R/\mfp)$, or equivalently that $\fdp(R_\mfp)+\dim(R/\mfp) \ge \gfdp_J(R)$. Since $\mfp$ was arbitrary in $D(J)$, we deduce $s(J)\ge \gfdp_J(R)$.

To see the converse, suppose $s(J)=k$. Then, for all $\mfp \in D(J)$, we have $\fdp(R_\mfp)\ge k-\dim(R/\mfp)$. Thus, $b_j(R_\mfp)=R_\mfp$ for all $0 \le j<k-\dim(R/\mfp)$, so by \Cref{lem: global-to-local vanishing for Frobenius actions}, it follows that $\mfb_j(R)\not\subset\mfp$ for any $\mfp \in D(J)$ and $0 \le j < k$. This implies that $J$ is inside the radical of $\mfb_0(R)\cdots \mfb_{k-1}(R)$, i.e. $\gfdp_J(R)\ge s(J)=k$. 

The final claim follows from the fact that if $\gfdp_J(R)=d$, then $\fdp (R_\mfp) +\dim(R/\mfp)\ge d$ implies $\fdp (R_\mfp) \ge \on{ht}( \mfp)$, while $\fdp (R_\mfp) \le \on{ht}(\mfp)$ by \Cref{thm: upper bound on gfdp}. 
\end{proof}

\subsection{Comparing depth-like invariants and singularities}

To further deepen the analogy between weakly $F$-nilpotent singularities and Cohen-Macaulay singularities and their variants, in this subsection we summarize their mutual relationships in the context of the generalized $F$-depth with respect to an ideal and record observations about rings of small dimension.

Recall that a local ring $(R,\fm)$ is said to be {\em generalized Cohen-Macaulay} if the local cohomology modules $H_\fm^i(R)$ have finite length for $0 \le i < \dim(R)$. The finiteness dimension of $R$, $$\operatorname{findim}(R) := \max\{ j\in \NN \mid H_\fm^j(R) \textrm{ is finite length}\},$$ measures how far the ring is from being generalized Cohen-Macaulay.  This invariant compares to the generalized depth with respect to an ideal $J$ in a similar fashion as generalized $F$-depth compares to the generalized $F$-depth with respect to an ideal.

\begin{rmk}\label{rmk: depth-like inequalities}

In light of \cref{lem: gF-depth_J observations} part (b), we get the following comparisons.

\begin{center}
    \begin{tikzcd}
    \text{depth}\arrow[phantom]{d}[labl]{\le} \arrow[phantom]{r}{\le}& \text{finiteness dimension} \arrow[phantom]{d}[labl]{\le} \arrow[phantom]{r}{\le} & \text{generalized depth wrt } J \arrow[phantom]{d}[labl]{\le}\\
    \text{F-depth} \arrow[phantom]{r}{\le} &  \text{generalized F-depth} \arrow[phantom]{r}{\le} & \text{generalized F-depth wrt } J
    \end{tikzcd}
\end{center}

As a consequence, the classes of singularities defined in terms of when these depth-like invariants are maximal can be arranged in the following hierarchy diagram.  

\begin{center}
\begin{tikzpicture}
\node[scale=.87] at (0,0) {
\begin{tikzcd}
\text{Cohen-Macaulay} \arrow[Rightarrow]{r} \arrow[Rightarrow]{d} & \text{generalized Cohen-Macaulay} \arrow[Rightarrow]{r} \arrow[Rightarrow]{d} & \text{generalized Cohen-Macaulay wrt J} \arrow[Rightarrow]{d}  \\
\text{weakly {\em F}-nilpotent} \arrow[Rightarrow]{r} & \text{generalized weakly {\em F}-nilpotent} \arrow[Rightarrow]{r} & \text{generalized weakly {\em F}-nilpotent wrt J}
\end{tikzcd}};
\end{tikzpicture}
\end{center}

Finally, note that if $R$ is $F$-injective, all the vertical inequalities in the first of the comparison diagrams above become equalities. Hence, each vertical implication in the second diagram becomes an equivalence. For example, an $F$-injective ring is Cohen-Macaulay if and only if it is weakly $F$-nilpotent. 
\end{rmk}

We conclude this section by observing that, for rings of low dimension, strong finiteness properties for local cohomology force the presence of certain singularity types. 

\begin{thm}\label{thm:dim 2 implies gen nilp}
Suppose $R$ is $F$-finite, equidimensional, and $\dim(R)\ge 2$. Then,  $H^1_\fm(R)$ is generalized nilpotent -- in particular, if $\dim(R)=2$, then $R$ is generalized weakly $F$-nilpotent.
\end{thm}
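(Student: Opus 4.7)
The plan is to reduce both assertions to the single inequality $\gfdp_\mfm(R) \ge 2$. By \Cref{lem: gF-depth_J observations}(d) applied with $J = \mfm$, this is equivalent to $\mfm \subset \mfb_0(R) \cap \mfb_1(R)$; the containment $\mfm \subset \mfb_0(R)$ is automatic since $H^0_\mfm(R)$ is finitely generated and $\mfm$-torsion, so $\gfdp_\mfm(R) \ge 2$ is actually equivalent to $\mfm \subset \mfb_1(R)$, which is the statement that $H^1_\mfm(R)$ is generalized nilpotent. In the $\dim R = 2$ case, this lower bound combined with the upper bound $\gfdp_\mfm(R) \le \dim R$ from \Cref{thm: upper bound on gfdp} (valid since $R$ is not Artinian, whence $\mfm \not\subset \sqrt{0}$) forces $\gfdp_\mfm(R) = \dim R$, which is the definition of generalized weak $F$-nilpotence.

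First I would pass to the completion via \Cref{thm: gfdp and reduction + flat local extension}, which preserves $\gfdp_\mfm$; both $F$-finiteness and equidimensionality transfer to $\widehat R$ since $F$-finite local rings are excellent. Then \Cref{thm: can check gfdp_J(R) locally on D(J)} reduces the problem to showing $\fdp(R_\mfp) + \dim(R/\mfp) \ge 2$ for every prime $\mfp \ne \mfm$. Since $R$ is catenary and equidimensional, $\dim(R/\mfp) = d - \on{ht}(\mfp) \ge 1$ for such $\mfp$, so the only case requiring genuine argument is $\dim(R/\mfp) = 1$, i.e.\ $\on{ht}(\mfp) = d - 1$. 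In that case $R_\mfp$ is local of dimension $d - 1 \ge 1$ and still equidimensional, and the needed bound becomes $\fdp(R_\mfp) \ge 1$; equivalently, $H^0_{\mfp R_\mfp}(R_\mfp)$ must be nilpotent under Frobenius.

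The main technical point, and what I expect to be the only real obstacle, is the following sublemma: \emph{any Noetherian local equidimensional ring $(S, \mfn)$ of dimension at least $1$ satisfies $H^0_\mfn(S) \subset \sqrt{0}$.} To prove it, I take $x \in H^0_\mfn(S)$ with $\mfn^N x = 0$. For any minimal prime $\mfq$ of $S$, equidimensionality together with $\dim S \ge 1$ forces $\mfq \subsetneq \mfn$, so I may pick $y \in \mfn \setminus \mfq$; then $y^N x = 0$ in the domain $S/\mfq$ with $\bar y \ne 0$ forces $x \in \mfq$. Intersecting over all minimal primes gives $x \in \sqrt{0}$, which is nilpotent in the Noetherian ring $S$, so some iterate of Frobenius kills $x$. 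Applying this sublemma to $S = R_\mfp$ yields $\fdp(R_\mfp) \ge 1$ and completes the argument.
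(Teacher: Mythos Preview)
Your proof is correct and follows essentially the same route as the paper's: both complete $R$ and then verify on the punctured spectrum that $\mfb_1(R)R_\mfp = R_\mfp$ (equivalently, $\fdp(R_\mfp)+\dim(R/\mfp)\ge 2$) for every $\mfp\neq\mfm$. The only cosmetic difference is that the paper first passes to $R_{\text{red}}$ and invokes \Cref{lem: global-to-local vanishing for Frobenius actions} directly, so that in the boundary case $\on{ht}(\mfp)=d-1$ the ring $R_\mfp$ is reduced of positive dimension and $\mfb_0(R_\mfp)=R_\mfp$ is immediate; you instead keep $R$ as is, quote the packaged \Cref{thm: can check gfdp_J(R) locally on D(J)}, and supply the equivalent fact via your sublemma $H^0_\mfn(S)\subset\sqrt{0}$.
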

\begin{proof} Notice that the question is unaffected by passing to $R_{\text{red}}$, as $\gfdp(R)=\gfdp(R_{\text{red}})$ by \Cref{thm: gfdp and reduction + flat local extension}, and we are only checking if $\gfdp(R)>1$. Further, we may pass to to the completion, which does not affect $0^F_{H^1_\mfm(R)}$ or its colength. Now, set $d = \dim (R)$, and we have $\mfb_1(R) R_\mfp = \mfb_{1-d+\operatorname{ht}(\mfp)}(R_\mfp)$ for all primes $\mfp \in \spec (R)$ by \Cref{lem: global-to-local vanishing for Frobenius actions}. As $d \geq 2$, $1-d + \operatorname{ht}(\mfp) < 1$ for $\mfp \neq \fm$ and so $\mfb_{1-d+\on{ht}(\mfp)}(R_\mfp) = R_\mfp$ for all primes $\mfp \in \spec^\circ(R)$. Thus $V(\mfb_1(R)) \subset \{ \fm \}$, which completes the proof.
\end{proof}

\begin{rmk}\label{rmk: low dimensional nilpotent singularities}
The corollary above helps us understand the landscape of low-dimensional nilpotent-type singularities. In particular, we can assert the following singularity types for all equidimensional, $F$-finite local rings.
\renewcommand{\arraystretch}{1.5}
\begin{center}
    \begin{tabular}{c|c|c}
        $\dim(R)$ & singularity type of $R$ & reference \\ \hline
        0 & $F$-nilpotent & \cite[Prop.~2.8(1)]{PQ19} \\
        1 & weakly $F$-nilpotent & \cite[Cor.~4.6]{Lyu06} \\
        2 & generalized weakly $F$-nilpotent & \Cref{thm:dim 2 implies gen nilp}
    \end{tabular}
\end{center}

Independently of characteristic, if we assume that $R$ is equidimensional and has a dualizing complex, similar (and well-known) results follow for the Cohen-Macaulay singularities of $R_{\text{red}}$.

\begin{center}
    \begin{tabular}{c|c}
        $\dim(R)$ & singularity type of $R_{\text{red}}$ \\ \hline
        0 & regular/field  \\
        1 & Cohen-Macaulay \\
        2 & generalized Cohen-Macaulay 
    \end{tabular}
\end{center}
\renewcommand{\arraystretch}{1}

Notice that we need to focus on $R_{\text{red}}$ in the Cohen-Macaulay case, since $\depth(R_{\text{red}})$ is generally not the same as $\depth( R)$.
\end{rmk}

The following example demonstrates that the equidimensionality assumption is necessary in both of the tables in the remark above.

\begin{example}
Let $K$ be an $F$-finite field, $T=K[[x,y,z]]$, and $R=T/(xz,yz)$. Writing $I=(x,y)$ and $J=(z)$, we have $R=T/I\cap J$, so the two-dimensional reduced ring $R$ is not equidimensional. Since $I+J=\mfm$ is the maximal ideal of $T$, we get the following short exact sequence of $T[F]$-modules: \begin{center}
    \begin{tikzcd}
    0 \arrow{r} & R \arrow{r} & T/I\oplus T/J \arrow{r} & K \arrow{r} & 0
    \end{tikzcd}
\end{center} to which we may apply local cohomology at $\mfm$, yielding the following short exact sequence of $T[F]$-modules: \begin{center}
    \begin{tikzcd}
    0 \arrow{r} & K \arrow{r} & H^1_\mfm(R) \arrow{r} & H^1_\mfm(T/I) \arrow{r} & 0.
    \end{tikzcd}
\end{center}
From here, we can see that the Frobenius action on $H^1_\mfm(R)$ is injective and $H^1_\mfm(R)$ is not of finite length, so $R$ is neither generalized weakly $F$-nilpotent nor generalized Cohen-Macaulay.
\end{example}

\begin{rmk}\label{rmk:sec3localtograded} For purpose of application in the next section, we note that the proofs of \Cref{lem: equivalent conditions for J^NM to be nilpotent}, \Cref{rmk: gF-depth_J unifies F-depth and gF-depth}, and \Cref{thm: F-depth lemma}  all hold in the graded setting as well.
\end{rmk}

\section{Nilpotent singularities of Rees algebras and associated graded rings}

We now give nilpotent analogs of theorems describing how the Cohen-Macaulay property transfers between a local ring, the Rees algebra of an ideal $I$, and the associated graded ring of $I$. By \Cref{rmk:sec3localtograded}, the critical results from \Cref{sec: Fdepth} apply to this section. The primary application is a nilpotent version of a theorem of Huneke \cite[Prop.~1.1]{HuGrI}. To establish notation, we let $(S,\mfp)$ be a local ring of dimension $d\ge 1$ and fix $I\subset S$ an ideal of positive grade, so in particular $\on{ht}(I) \ge 1$. 

The Rees algebra $\cR=S[It]$ is an $\mbn$-graded ring defined by \[[\cR]_n = I^nt^n\] viewed as a subring of the polynomial ring $S[t]$. We write $It = \oplus_{n \ge 1} [\cR]_n$ for the ideal generated by all elements of positive degree, and note that $S=\cR/It$. Further, the homogeneous ideal $I\cR = \oplus_{n\ge 0} I^{n+1}t^n$ has quotient $\cR/I\cR \simeq \cG$, the associated graded ring of $I$, defined by \[[\cG]_n = I^n/I^{n+1}.\] This description of $S$ and $\cG$ as quotients of $\cR$ gives us the two fundamental short exact sequences used to study relationships between $\cR$ and $\cG$, which we label below: \begin{center}\begin{tikzcd}[row sep=0]
0 \arrow{r} & It \arrow{r} & \cR \arrow{r} & S \arrow{r} & 0 &  (A), \\
0 \arrow{r} & I\cR \arrow{r}& \cR \arrow{r} & \cG \arrow{r} & 0 & (B).
\end{tikzcd}\end{center}

Since $\on{ht}(I)\ge 1$, it is well-known that $\dim(\cR)=d+1$ and $\dim(\cG)=d$ (see, e.g., \cite[Thm.~15.7]{Mats}), and further that $It(1)\simeq I\cR$ as $\cR$-modules. To understand $It$ and $I\cR$ as graded $\cR[F]$-modules, we restrict the Frobenius endomorphism $F \colon \cR\rightarrow \cR$ to the ideals $It$ and $I\cR$, to get graded Frobenius actions $F|_{It}=\rho \colon It\rightarrow It$ and $F|_{I\cR}=\sigma  \colon  I\cR\rightarrow I\cR$. 

We then have an $\cR$-linear map $\theta  \colon  I\cR \rightarrow It$ defined on homogeneous elements by $\theta(at^n) = at^{n+1}$, which is homogeneous of degree $1$. The map $\theta$ is easily seen to be an isomorphism which demonstrates $I\cR \simeq (It)(1)$. Furthermore, for any $e \ge 0$ we get a $\cR$-linear map $\eta_e \colon It\rightarrow I\cR$ defined on homogeneous elements by $\eta_e(at^n) = at^{n-p^e}$ if $n \ge p^e$ and $0$ if $n<p^e$, which is homogeneous of degree $-p^e$. While $\eta_e$ does have a nonzero kernel, we note that it is injective in the degrees on which it is nonzero.

For any $e \ge 0$ and $n \ge 0$, these maps fit together into the commutative diagram below, where the vertical maps are $p^e$-linear over $S$: \begin{center}
\begin{tikzcd}
\,[I\cR]_n \arrow{d}{\sigma^e}\arrow{r}{\theta} &\arrow{d}{\rho^e}\,[It]_{n+1} \\
\, [I\cR]_{np^e} & \arrow{l}{\eta_e}\,[It]_{np^e+p^e}.
\end{tikzcd}
\end{center} If $e=0$, then $\rho^e$ and $\sigma^e$ are the identity maps and $\eta_0$ is the inverse map to $\theta$. In this sense, it is important to note that while $\theta \colon I\cR\rightarrow It$ is an isomorphism of graded modules, it does not commute with the Frobenius actions in each degree. Thus, while the isomorphism $I\cR \simeq It(1)$ suffices to study depth along the short exact sequences (A) and (B), we must utilize the diagram above to understand generalized $F$-depth along the same sequences. 

\begin{lemma}\label{lem: IR and It have same gfdp_J}
For any homogeneous $x \in \cR$, the $p^e$-linear map $x\rho^e \colon H^j_\mfm(It)\rightarrow H^j_\mfm(It)$ vanishes if and only if the $p^e$-linear map $x\sigma^e \colon H^j_\mfm(I\cR) \rightarrow H^j_\mfm(I\cR)$ vanishes. In particular, $\gfdp_J(I\cR)=\gfdp_J(It)$ for all homogeneous ideals $J\subset \cR$. 
\end{lemma}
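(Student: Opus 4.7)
The plan is to derive both implications from the commutative diagram displayed just before the lemma statement, together with the fact that $\theta \colon I\cR \to It(1)$ is an $\cR$-module isomorphism. Applying the local cohomology functor $H^j_\mfm(-)$ to the diagram, I obtain the identity
\[
\sigma^e \,=\, \eta_e \circ \rho^e \circ \theta \colon H^j_\mfm(I\cR) \to H^j_\mfm(I\cR),
\]
which exhibits $\sigma^e$ as a composition factoring through $H^j_\mfm(It)$. Since $\theta$ induces an isomorphism on local cohomology, it can be freely inverted, enabling a direct comparison between the two Frobenius actions.

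For the forward direction, suppose $x\rho^e$ vanishes on $H^j_\mfm(It)$. Using the $\cR$-linearity of $\theta$ and the compatibility of $\eta_e$ with the $\cR$-action on the image of $\rho^e$, I rewrite
\[
x\sigma^e \,=\, x \circ \eta_e \circ \rho^e \circ \theta \,=\, \eta_e \circ (x\rho^e) \circ \theta \,=\, 0
\]
on $H^j_\mfm(I\cR)$.

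For the reverse direction, assume $x\sigma^e$ vanishes on $H^j_\mfm(I\cR)$. Using the same relation and invertibility of $\theta$, this yields $\eta_e \circ (x\rho^e) = 0$ on $H^j_\mfm(It)$, so the image of $x\rho^e$ is contained in $\ker(\eta_e)$. The key observation is that the module-level map $\eta_e$ is injective on $[It]_n$ for $n \ge p^e$, while $\rho^e$ multiplies degrees by a factor of $p^e$; thus its image naturally lies in the range where $\eta_e$ is injective, which forces $x\rho^e = 0$.

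The main obstacle I anticipate is making the compatibility of $\eta_e$ with the $\cR$-action and its injectivity on the image of $\rho^e$ precise at the level of local cohomology, since $\eta_e$ is only genuinely well-behaved on sufficiently high graded pieces. Careful degree-tracking, e.g.\ via the \v{C}ech complex representation of $H^j_\mfm(It)$, should resolve this. Once the element-level equivalence is established, the statement $\gfdp_J(I\cR) = \gfdp_J(It)$ follows for any homogeneous ideal $J \subset \cR$ by \Cref{lem: equivalent conditions for J^NM to be nilpotent} (in its graded form, per \Cref{rmk:sec3localtograded}): both $J^N H^j_\mfm(It)$ and $J^N H^j_\mfm(I\cR)$ are nilpotent precisely when $x^{p^e}\rho^e = 0$ (equivalently $x^{p^e}\sigma^e = 0$) for all $x \in J^N$ and all sufficiently large $e$.
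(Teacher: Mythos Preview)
Your overall strategy matches the paper's: exploit the identity $\sigma^e = \eta_e \circ \rho^e \circ \theta$ together with the invertibility of $\theta$. The forward implication is handled essentially as the paper does. The gap lies in the reverse direction, precisely at the obstacle you flag but underestimate.

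The problem is twofold. First, $\eta_e$ is \emph{not} $\cR$-linear on all of $It$: for $r\in[\cR]_m$ and $at^n\in[It]_n$ with $n<p^e\le n+m$ one has $\eta_e(r\cdot at^n)\neq r\cdot\eta_e(at^n)$. Consequently $\eta_e$ does not functorially induce a map on $H^j_\mfm(It)$, so the phrase ``$\ker(\eta_e)$ on $H^j_\mfm(It)$'' has no a priori meaning, and your displayed identities on local cohomology are not yet justified. Second, even granting some meaning to $\eta_e$ at the cohomological level, your degree argument fails: $H^j_\mfm(It)$ typically has nonzero pieces in \emph{negative} degrees, and $\rho^e$ sends degree $n$ to degree $np^e$, which for $n\le 0$ is $\le 0$, not $\ge p^e$. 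So ``the image of $\rho^e$ lies where $\eta_e$ is injective'' is simply false on $H^j_\mfm(It)$.

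The paper closes this gap not with the \v{C}ech complex for $\mfm$, but by invoking the graded Nagel--Schenzel isomorphism (\Cref{thm: graded NS}): one chooses a homogeneous filter regular sequence $\uline{y}=y_1,\dots,y_j$ simultaneously on $It$ and $I\cR$, so that $H^j_\mfm(It)$ embeds as the $\mfm$-torsion of the \emph{top} cohomology $H^j_{(\uline{y})}(It)$. A nonzero class there is represented by $[at^n + (\uline{y}^l)It]$ with $at^n\in It$, forcing $n\ge 1$; after applying $x\rho^e$ the numerator sits in degree $np^e\ge p^e$, exactly the range where $\eta_e$ is $\cR$-linear and injective. The paper then checks by hand that non-vanishing of $x\rho^e$ on such a class forces non-vanishing of $x\sigma^e$ on $\theta^{-1}$ of it. This explicit representative argument is the missing ingredient; your sketch becomes a proof once you insert it.
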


\begin{proof}
Since $It$ and $I\cR$ are isomorphic as graded modules, for any sequence $\uline{y}=y_1,\cdots,y_j$ of homogeneous elements, the homogeneous associated primes of $It/(\uline{y}) It$ and $I\cR/(\uline{y}) I\cR$ are the same. Consequently, by homogeneous prime avoidance we can select 
a sequence of homogeneous elements which is a filter regular sequence on both $I\cR$ and $It$ simultaneously. We fix such a sequence $\uline{y}=y_1,\cdots,y_j$ and let $y=y_1\cdots y_j$ and $\mfq = (\uline{y})$.

We can view $H^j_\mfm(I\cR)$ and $H^j_\mfm(It)$ as the $\mfm$-torsion submodules of $H^j_\mfq(I\cR)$ and $H^j_\mfq(It)$ by the graded Nagel-Schenzel isomorphism, \cref{thm: graded NS}. We now consider the commutative diagram below, where the columns are $p^e$-linear and the rows are $\cR$-linear: \begin{center}     \begin{tikzcd} 
        H^j_\mfq(I\cR) \arrow{r}{\theta}\arrow{d}{x\sigma^e} & H^j_\mfq(It)\arrow{d}{x\rho^e}\\ 
        H^j_\mfq(I\cR) & H^j_\mfq(It) \arrow{l}{\eta_e}. 
    \end{tikzcd}
\end{center} If $x\rho^e=0$ we must have $x\sigma^e=0$. To see the converse, if we have an $\xi \in H^j_\mfq(It)$ which is $\mfm$-torsion and $x\rho^e(\xi)\neq 0$, we may assume that $\xi$ is homogeneous. By \cref{rmk: nagel-schenzel nilpotence}, we can write $\xi=[at^n + (\uline{y}^l) It]$ and $x\rho^e([at^n + (\uline{y}^l) It]) = [xa^{p^e}t^{np^e} + (\uline{y}^{lp^e})(It)^{\fbp{p^e}}]$.

For this class to be non-vanishing, we must have $y^m xa^{p^e}t^{np^e} \not \in  (\uline{y}^{lp^e+m})(It)^{\fbp{p^e}}$ for any $m \in \mbn$. In particular, letting $b_1,\cdots,b_k$ (where notably $\deg(b_i)\ge p^e$ for each $i$) be the generators of the ideal $(\uline{y}^{lp^e+m}) (It)^{\fbp{p^e}}$, we must have in $\cR$ that $y^m xa^{p^e} t^{np^e} + \sum g_ib_i \neq 0$ for any $g_i \in \cR$. Since the degree of both terms is at least $p^e$,  we know that: \[ \eta_e\left(y^m xa^{p^e}t^{np^e} + \sum g_ib_i\right) = y^m x a^{p^e}t^{np^e-p^e} + \sum g_i \eta_e(b_i) \neq 0  \] for all $m \in \mbn$ and $g_1,\cdots,g_k \in \cR$. 

We can then easily see that $\theta^{-1}(\xi) = [at^{n-1} + (\uline{y}^l)I\cR]\in H^j_\mfq(I\cR)$ does not vanish under $x\sigma^e$, as otherwise we would have a relation $y^mxa^{p^e}t^{np^e-p^e} \in (\uline{y}^{lp^e+m})(I\cR)^{\fbp{p^e}}$ for some $m$, giving $y^m xa^{p^e}t^{np^e-p^e} = \sum g_i c_i$ where $c_i = \eta_e(b_i)$, as the generators $b_i$ of $(\uline{y}^{lp^e+m})(It)^{\fbp{p^e}}$ map to generators $c_i$ of $(\uline{y}^{lp^e+m})(I\cR)^{\fbp{p^e}}$. The final claim follows by applying \Cref{lem: equivalent conditions for J^NM to be nilpotent}.
\end{proof}

We now give a nilpotent version of Huneke's theorem, see \Cref{cor: nilpotent Huneke's thm}. The key ingredient in our proof is the following result on generalized $F$-depth with respect to an ideal, which also holds for generalized depth with respect to an ideal, with a similar proof.

\begin{thm}\label{thm: bounding gfdp_J of G}
Let $J_0\subset S$ be an ideal. For $J$ a homogeneous ideal of $\cR$ such that $[J]_0=J_0$, \[\gfdp_{J\cG}(\cG) \ge \min \{\gfdp_{J_0} (S) ,\gfdp_J (\cR) -1\}.\] In particular, if $S$ is generalized weakly $F$-nilpotent with respect to $J_0$ and $\cR$ is generalized weakly $F$-nilpotent with respect to $J$, then $\cG$ is generalized weakly $F$-nilpotent with respect to $J\cG$.
\end{thm}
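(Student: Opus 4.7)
The plan is to chain together the short-exact-sequence inequalities for generalized $F$-depth (\Cref{thm: F-depth lemma}) applied to the two fundamental sequences $(A)$ and $(B)$, glued together by \Cref{lem: IR and It have same gfdp_J}.

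First, I apply \Cref{thm: F-depth lemma}(2) to sequence $(A)$: $0 \to It \to \cR \to S \to 0$, viewed as a short exact sequence of graded $\cR[F]$-modules. This yields
\[
\gfdp_J(It) \ge \min\{\gfdp_J(\cR),\ \gfdp_J(S) + 1\}.
\]
Before proceeding, I identify $\gfdp_J(S)$ with $\gfdp_{J_0}(S)$. The surjection $\cR \twoheadrightarrow S = \cR/It$ sends $\fm$ onto $\fp$, and any homogeneous element of $J$ of positive degree already lies in $It$, hence acts trivially on $S$; thus the action of $J$ on $H^j_\fm(S)$ factors through $JS = J_0$. The graded form of \Cref{lem: gF-depth_J observations}(e) (per \Cref{rmk:sec3localtograded}) then gives $\gfdp_J(S) = \gfdp_{J_0}(S)$.

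Next, I apply \Cref{thm: F-depth lemma}(3) to sequence $(B)$: $0 \to I\cR \to \cR \to \cG \to 0$, obtaining
\[
\gfdp_J(\cG) \ge \min\{\gfdp_J(\cR),\ \gfdp_J(I\cR) - 1\}.
\]
Using \Cref{lem: IR and It have same gfdp_J} to replace $\gfdp_J(I\cR)$ by $\gfdp_J(It)$ and substituting the inequality from sequence $(A)$ produces
\[
\gfdp_J(\cG) \ge \min\bigl\{\gfdp_J(\cR),\ \min\{\gfdp_J(\cR),\ \gfdp_{J_0}(S)+1\} - 1\bigr\} = \min\{\gfdp_J(\cR) - 1,\ \gfdp_{J_0}(S)\}.
\]
Finally, $\gfdp_J(\cG) = \gfdp_{J\cG}(\cG)$ by another application of the graded form of \Cref{lem: gF-depth_J observations}(e) to $\cR \twoheadrightarrow \cG$, delivering the desired bound. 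The ``in particular'' claim is then immediate: if $\gfdp_{J_0}(S) = d = \dim S$ and $\gfdp_J(\cR) = d+1 = \dim \cR$, the bound reads $\gfdp_{J\cG}(\cG) \ge d = \dim \cG$, so $\cG$ is generalized weakly $F$-nilpotent with respect to $J\cG$.

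The main obstacle here is purely bookkeeping: one must verify that $(A)$ and $(B)$ really are sequences of $\cR[F]$-modules (immediate, since all three terms inherit their Frobenius by restriction from $\cR$) and carefully track the switches between $\cR$-, $S$-, and $\cG$-linear viewpoints when invoking change-of-rings. The genuine technical subtlety---the twisted Frobenius mismatch between $It$ and the degree-shifted $I\cR$---has already been absorbed into the preceding \Cref{lem: IR and It have same gfdp_J}, which is precisely what makes the two-step chaining above work.
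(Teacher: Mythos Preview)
Your proof is correct and follows essentially the same route as the paper: both arguments chain \Cref{thm: F-depth lemma} along the fundamental sequences $(A)$ and $(B)$, invoking \Cref{lem: IR and It have same gfdp_J} to pass between $\gfdp_J(I\cR)$ and $\gfdp_J(It)$, and then simplify $\min\{r,\min\{r,s+1\}-1\}$ to $\min\{r-1,s\}$. You are slightly more explicit than the paper about the change-of-rings identifications $\gfdp_J(S)=\gfdp_{J_0}(S)$ and $\gfdp_J(\cG)=\gfdp_{J\cG}(\cG)$ via \Cref{lem: gF-depth_J observations}(e), and you apply the two sequences in the opposite order, but these are cosmetic differences.
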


\begin{proof}
Let $\gfdp_J(\cR)=r$ and $\gfdp_{J_0}(S)=s$. We repeatedly apply \cref{thm: F-depth lemma} to the fundamental short exact sequences (A) and (B) above. We see first that \[
\gfdp_{J\cG} (\cG) \ge \min\{r,\gfdp_J (I\cR) - 1\} = \min \{ r,\gfdp_J (It) -1 \}
\] by \Cref{lem: IR and It have same gfdp_J}. Using the $F$-depth lemma again, we get \[\min \{ r,\gfdp_J (It) -1 \} \ge \min \{ r,\min\{r,s+1\}-1 \}.\] From here, a case analysis for $r\ge s+1$ versus $r<s+1$ allows us to place the lower bound $\min\{s,r-1\}$, as required.

The final claim follows by observing that $\dim(\cG) = \dim(S)$ and $\dim (\cR) = \dim (S) + 1$.
\end{proof}

Since the theorem above demonstrates that generalized weak $F$-nilpotence with respect to an ideal transfers from $S$ and $\cR$ to $\cG$, we get the following corollary, see \Cref{rmk: gF-depth_J unifies F-depth and gF-depth}. 

\begin{cor}\label{cor: nilpotent Huneke's thm}
Assume the setting of \Cref{thm: bounding gfdp_J of G}. If $S$ and $\cR$ are (generalized) weakly $F$-nilpotent, so is $\cG$.
\end{cor}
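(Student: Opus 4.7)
The plan is to derive this directly from \Cref{thm: bounding gfdp_J of G} by specializing the ideals $J_0$ and $J$ in each of the two cases, using the identifications in \Cref{rmk: gF-depth_J unifies F-depth and gF-depth}: the $F$-depth is $\gfdp_R(R)$ (take $J=R$) and the generalized $F$-depth is $\gfdp_\mfm(R)$ (take $J$ the maximal ideal). The whole corollary is then just careful bookkeeping around the lower bound supplied by the theorem.

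For the weakly $F$-nilpotent case, I would take $J_0 = S$ and $J = \cR$. The hypothesis $[J]_0 = J_0$ is automatic since $[\cR]_0 = S$, and $J\cG = \cG$, so the theorem gives
\[
\fdp(\cG) = \gfdp_\cG(\cG) \ge \min\{\gfdp_S(S), \gfdp_\cR(\cR) - 1\} = \min\{\fdp(S), \fdp(\cR) - 1\}.
\]
Assuming $S$ and $\cR$ are weakly $F$-nilpotent yields $\fdp(S) = d$ and $\fdp(\cR) = d+1$, so $\fdp(\cG) \ge d = \dim(\cG)$, which forces equality.

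For the generalized weakly $F$-nilpotent case, I would take $J_0 = \mfp$ (the maximal ideal of $S$) and $J = \mfm$ (the homogeneous maximal ideal of $\cR$, that is, $\mfp + \cR_+$). Then $[\mfm]_0 = \mfp$ and $\mfm\cG$ is the homogeneous maximal ideal of $\cG$, so the theorem and the same identifications give
\[
\gfdp(\cG) \ge \min\{\gfdp(S), \gfdp(\cR) - 1\} = \min\{d, d\} = d = \dim(\cG),
\]
again forcing the maximality that defines generalized weak $F$-nilpotence.

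Since the content is pure specialization of \Cref{thm: bounding gfdp_J of G}, I do not expect any genuine obstacle. The only care required is verifying in each case that the chosen homogeneous ideal $J \subset \cR$ really does satisfy $[J]_0 = J_0$ and that $J\cG$ is the ideal with respect to which we want to measure nilpotence on $\cG$ (the unit ideal, respectively the homogeneous maximal ideal); both checks are immediate from the definitions.
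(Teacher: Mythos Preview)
Your proposal is correct and follows exactly the approach the paper intends: specialize \Cref{thm: bounding gfdp_J of G} using the identifications in \Cref{rmk: gF-depth_J unifies F-depth and gF-depth}, choosing $J_0=S$, $J=\cR$ for the weakly $F$-nilpotent case and $J_0=\mfp$, $J=\mfm$ for the generalized case. The paper merely gestures at this (``we get the following corollary, see \Cref{rmk: gF-depth_J unifies F-depth and gF-depth}'') without spelling out the bookkeeping you have carefully supplied.
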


We conclude this section with a brief corollary bounding the generalized $F$-depth of $\cR$ with respect to the ideal $It$ of positively-graded elements of $\cR$.

\begin{cor}\label{cor:Half of HM 3.2}
Assuming the setting of \Cref{thm: bounding gfdp_J of G}, $\gfdp_{It} (\cR) \le \gfdp_{\cG+} (\cG) +1$.
\end{cor}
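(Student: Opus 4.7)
The plan is to chain the two fundamental short exact sequences
\[(A)\colon 0 \to It \to \cR \to S \to 0, \qquad (B)\colon 0 \to I\cR \to \cR \to \cG \to 0\]
by repeated application of the F-depth lemma (\Cref{thm: F-depth lemma}, valid in the graded setting by \Cref{rmk:sec3localtograded}), combined with the identification $\gfdp_{It}(I\cR) = \gfdp_{It}(It)$ from \Cref{lem: IR and It have same gfdp_J}. The structure mirrors the proof of \Cref{thm: bounding gfdp_J of G}, but now to extract an upper bound on $\gfdp_{It}(\cR)$ rather than a lower bound on the gfdp of $\cG$, so I will invoke parts (2) and (3) of the F-depth lemma rather than part (1).

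First I compute two auxiliary generalized $F$-depths directly from the definition. Since $It$ maps to the zero ideal under the projection $\cR \twoheadrightarrow S$, we have $(It)^N H^j_\mfm(S) = 0$ for every $N \ge 1$ and every $j$, hence $\gfdp_{It}(S) = \infty$. Since the image of $It$ in $\cG$ is precisely the irrelevant ideal $\cG_+$, we similarly get $(It)^N H^j_\mfm(\cG) = (\cG_+)^N H^j_\mfm(\cG)$ for all $N$ and $j$, and therefore $\gfdp_{It}(\cG) = \gfdp_{\cG_+}(\cG)$.

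Applying part (2) of \Cref{thm: F-depth lemma} to sequence $(A)$ with $J = It$ yields
\[\gfdp_{It}(It) \ge \min\bigl\{\gfdp_{It}(\cR),\, \gfdp_{It}(S)+1\bigr\} = \gfdp_{It}(\cR),\]
and \Cref{lem: IR and It have same gfdp_J} upgrades this to $\gfdp_{It}(I\cR) \ge \gfdp_{It}(\cR)$. Applying part (3) of the same lemma to sequence $(B)$ then gives
\[\gfdp_{\cG_+}(\cG) = \gfdp_{It}(\cG) \ge \min\bigl\{\gfdp_{It}(\cR),\, \gfdp_{It}(I\cR)-1\bigr\} \ge \gfdp_{It}(\cR) - 1,\]
where the final inequality uses the bound just established. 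Rearranging gives the desired conclusion. I do not anticipate any real obstacle: the two auxiliary identifications in the second paragraph are transparent from the definition of $\gfdp_J$, and the remainder is formal manipulation of the F-depth lemma along the standard sequences $(A)$ and $(B)$.
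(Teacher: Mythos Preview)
Your proof is correct and follows essentially the same route as the paper. The only difference is that the paper obtains the corollary in one line by applying \Cref{thm: bounding gfdp_J of G} with $J = It$ (so $J_0 = [It]_0 = (0)$ and $\gfdp_{(0)}(S) = \infty$), whereas you unpack that citation and redo the same $F$-depth-lemma chase along sequences $(A)$ and $(B)$ by hand.
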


\begin{proof}
Observe that $[It]_0=(0)$ and $\gfdp_{(0)} (S) = \infty$, and apply \Cref{thm: bounding gfdp_J of G}.
\end{proof}

The previous corollary is a partial analog to \cite[Prop.~3.2]{HM94}, where the authors prove a similar theorem bounding the generalized depth of $\cR$ with respect to $It$. The proof of the remaining inequality in \cite[Prop.~3.2]{HM94} uses the fact that depth drops by one when taking a quotient by a regular element, see \cite[Lem.~3.1 and its proof]{HM94}. In the next section, we investigate the analogous property for $F$-depth.

\section{F-depth elements}

In this section, we continue to let $(R,\mfm)$ denote a (graded) local ring of prime characteristic $p>0$. A powerful feature of depth is that it drops by one when modding out by a regular element. This is not necessarily the case for $F$-depth. Indeed, if $(R,\mfm)$ is a local ring with $\dim (R) \ge2$ and $\fdp(R) =1$, then for every regular element $x$ in $R$ one has $\fdp (R/xR) \ge 1$ since $H^0_{\mathfrak{m}}(R/xR)$ is nilpotent. The main goal of this section is to identify a class of regular elements $x$ for which $\fdp (R/xR) = \fdp (R) -1$.  

\begin{lemma}\label{lem: gfdp_J drops by at most 1 when modding out by x}
If $x \in R$ is a (homogeneous) regular element and $J\subset R$ is any (homogeneous) ideal, then $\gfdp_J (R/xR) \ge \gfdp_J (R) - 1$; in particular, $\fdp (R/xR) \ge \fdp (R) -1$.
\end{lemma}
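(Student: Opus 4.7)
The natural idea is to reduce to the $F$-depth lemma (Theorem~3.6(3)) applied to a short exact sequence built from multiplication by $x$. The naive sequence $0 \to R \xrightarrow{\cdot x} R \to R/xR \to 0$ is not a short exact sequence of $R[F]$-modules because multiplication by $x$ does not commute with $F$: one has $F \circ x = x^p \circ F$. The fix is already built into the second commutative diagram of \Cref{lem: graded seses associated to homogeneous elements}: endowing the leftmost copy of $R(-m)$ (with $m = \deg x$ in the graded case, or $m=0$ in the ungraded case) with the adjusted Frobenius action $x^{p-1}F$, we obtain a short exact sequence of graded $R[F]$-modules
\[
0 \longrightarrow (R(-m),\, x^{p-1}F) \xrightarrow{\;\cdot x\;} (R,F) \longrightarrow (R/xR, F) \longrightarrow 0,
\]
to which the $F$-depth lemma applies, since all three terms are finitely generated $R$-modules and hence their local cohomology modules have finite Hartshorne--Speiser--Lyubeznik number (\Cref{rmk: HSL numbers are finite}).

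Applying \Cref{thm: F-depth lemma}(3) to this sequence yields
\[
\gfdp_J(R/xR) \;\ge\; \min\bigl\{\gfdp_J(R),\; \gfdp_J(R(-m),\, x^{p-1}F) - 1\bigr\}.
\]
So it suffices to prove
\[
\gfdp_J\bigl(R(-m),\, x^{p-1}F\bigr) \;\ge\; \gfdp_J(R,F),
\]
after which the two quantities in the minimum are at least $\gfdp_J(R)$ and $\gfdp_J(R)-1$ respectively, giving the claim.

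For this last inequality, the key observation is the iterate computation: a direct induction on $e$ shows
\[
(x^{p-1}F)^e = x^{p^e - 1}\, F^e,
\]
as maps $R \to R$ (and hence, after passing through \Cref{xmp: lc has frob action}, as the induced actions on each $H^j_\mfm(R)(-m)$). Therefore, if $J^N H^j_\mfm(R)$ is nilpotent under $F$ --- say killed by $F^e$ --- then for any $\xi \in J^N H^j_\mfm(R)(-m)$ we have $(x^{p-1}F)^e(\xi) = x^{p^e-1} F^e(\xi) = 0$. This shows $J^N H^j_\mfm(R(-m))$ is nilpotent under $x^{p-1}F$, so the infimum defining $\gfdp_J(R(-m),x^{p-1}F)$ can only be at least as large as that defining $\gfdp_J(R,F)$, as required.

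The argument is essentially formal and I do not expect a serious obstacle; the only subtlety is setting up the correct twisted $R[F]$-module structure so that multiplication by $x$ becomes $R[F]$-linear. The particular case $\fdp(R/xR) \ge \fdp(R)-1$ follows by taking $J = R$ (see \Cref{rmk: gF-depth_J unifies F-depth and gF-depth}).
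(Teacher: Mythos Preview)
Your proof is correct and follows essentially the same approach as the paper: endow the left-hand copy of $R$ with the adjusted action $x^{p-1}F$ so that the multiplication-by-$x$ sequence becomes a short exact sequence of $R[F]$-modules, observe that $\gfdp_J(R,x^{p-1}F)\ge \gfdp_J(R,F)$ via the iterate formula $(x^{p-1}F)^e=x^{p^e-1}F^e$, and then apply \Cref{thm: F-depth lemma}(3). Your write-up is slightly more explicit about the graded twist and the HSL finiteness hypothesis, but the argument is the same.
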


\begin{proof}
Consider the short exact sequence \begin{center}
    \begin{tikzcd}
    0 \arrow{r} & R \arrow{r}{\cdot x} & R \arrow{r} & R/xR \arrow{r} & 0 
    \end{tikzcd}
\end{center} where we endow the first copy of $R$ with the Frobenius action $\rho=x^{p-1}F$ to ensure that the maps in the sequence commute with Frobenius. Then, we note that $\gfdp_J (R,\rho)\ge \gfdp_J (R,F)$, since if $J^NH^j_\mfm (R)$ vanishes under $F^e$, it must also vanish under $x^{p^e-1}F^e=\rho^e$. From \Cref{thm: F-depth lemma} it then follows that \begin{align*}
\gfdp_J (R/xR) &\ge \min\{ \gfdp_J (R),\gfdp_J (R,\rho)-1 \} \\  &\ge \min\{\gfdp_J (R),\gfdp_J (R) -1\} \\ &=\gfdp_J (R) -1,
\end{align*} as required.
\end{proof}

The following definition works in either the graded or the local setting.

\begin{defn}\label{defn: Fdepth elements}
Let $(M,\rho)$ be a finitely generated (graded) $R[F]$-module. A (homogeneous) element $x\in R$ which is regular on $M$ is an \textbf{$F$-depth element on $(M,\rho)$} if $\fdp(M/xM,\overline{\rho}) = \fdp(M,\rho) -1$.
\end{defn}

If $M$ is nilpotent, the theory is vacuous since $\fdp (M) = \fdp (M/xM) = \infty$ for each $x \in R$ in this case. In some circumstances, every regular element is an $F$-depth element.

\begin{rmk}\label{rmk: every reg elt is F-depth for wFn}
Note that if $R$ is a weakly $F$-nilpotent local ring, then every regular element $x\in R$ is an $F$-depth element on $R$. This is because $\dim (R) - 1 = \fdp (R) -1 \le \fdp (R/xR) \le \dim (R/xR) = \dim (R) - 1$, where the first inequality follows from \Cref{lem: gfdp_J drops by at most 1 when modding out by x}. In particular, if $\dim (R) = 1$, every element of $R$ is an $F$-depth element on $R$ by \Cref{rmk: low dimensional nilpotent singularities}.
\end{rmk}

We now give a sufficient condition for a regular element $x\in R$ to be an $F$-depth element on $R$ in either the graded or local case.

\begin{thm}\label{thm: existence of F-depth elements}
Let $\fdp (R)=t$ and suppose $x$ is a (homogeneous) regular element on $R$. If we have that $\fdp (R/xR)=\depth (R/xR)$, then $x$ is an $F$-depth element on $R$.
\end{thm}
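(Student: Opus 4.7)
The plan is to combine the lower bound from \Cref{lem: gfdp_J drops by at most 1 when modding out by x} with a matching upper bound derived from the hypothesis $\fdp(R/xR) = \depth(R/xR)$ and the fact that $x$ is regular on $R$.

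First, by \Cref{lem: gfdp_J drops by at most 1 when modding out by x} applied with $J = R$, we already have
\[
\fdp(R/xR) \;\ge\; \fdp(R) - 1 \;=\; t - 1,
\]
so the entire task reduces to establishing the reverse inequality $\fdp(R/xR) \le t-1$. This is where the hypothesis enters. Since $x$ is regular on $R$, the standard depth computation gives $\depth(R/xR) = \depth(R) - 1$, and the inequality chain recorded in \Cref{lem: gF-depth_J observations}(b) (see also \Cref{rmk: depth-like inequalities}) yields $\depth(R) \le \fdp(R) = t$. Together with the assumption $\fdp(R/xR) = \depth(R/xR)$, this gives
\[
\fdp(R/xR) \;=\; \depth(R/xR) \;=\; \depth(R) - 1 \;\le\; t - 1.
\]

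Combining the two inequalities, $\fdp(R/xR) = t - 1 = \fdp(R) - 1$, which is precisely the condition in \Cref{defn: Fdepth elements} for $x$ to be an $F$-depth element on $R$. In the graded setting the argument is identical, since $R$ has a homogeneous maximal ideal $\mfm$, the modules $H^j_\mfm(\Arg)$ used to define $\fdp$ carry the same graded $R[F]$-structure as in the local case, and the depth inequalities recalled above hold verbatim.

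There is no genuine obstacle here: both the lemma providing the lower bound and the comparison $\depth \le \fdp$ have already been established in the paper, so the proof is essentially a two-line pinch between the two bounds. The only subtlety worth flagging explicitly is that the hypothesis $\fdp(R/xR) = \depth(R/xR)$ forces $\depth(R) = \fdp(R)$ as well, so this result is really detecting regular elements whose reduction preserves the coincidence of ordinary depth with $F$-depth.
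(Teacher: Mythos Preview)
Your proof is correct and in fact more direct than the paper's. Both arguments first record that the hypothesis gives $\fdp(R/xR)=\depth(R/xR)=\depth(R)-1=:s-1$, and both must then show $s=t$. You get $s-1\ge t-1$ immediately from \Cref{lem: gfdp_J drops by at most 1 when modding out by x}, which needs only the $F$-depth inequality along the short exact sequence for multiplication by $x$. The paper instead invokes the Polstra--Quy characterization (\Cref{thm: graded PQ 4.2}): it uses \Cref{lem: checking gfdp_J up to radical} to deduce $\fdp(R/x^{p^e}R)=s-1$ for every $e$, so that $H^{s-1}_\mfm(R/x^{p^e}R)$ never vanishes under the relative Frobenius $f_R$, and then applies the contrapositive of \Cref{thm: graded PQ 4.2}(b)$\Rightarrow$(a) to conclude $\fdp(R)\le s$. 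Your route bypasses the relative Frobenius and filter-regular machinery entirely and is the cleaner proof of this particular statement; the paper's route, while heavier here, showcases how \Cref{thm: graded PQ 4.2} reads off $\fdp(R)$ from quotient data, which is the mechanism driving several other results in Section~5.
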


\begin{proof}
 Write $s=\depth(R)$. If  $\fdp (R/xR)=\depth (R/xR)$, then $\fdp (R/xR) =s-1$, since $x$ is a regular element. As  $\sqrt{xR}=\sqrt{x^{p^e}R}$, we have $\fdp (R/xR)=\fdp (R/x^{p^e}R) =s-1$ by \Cref{lem: checking gfdp_J up to radical}, so that $f^{e'}_R(H^{s-1}_\mfm(R/x^{p^e}R))$ does not vanish for each $e, e' \in \mbn$, where $f_R$ denotes the relative Frobenius action as in \cref{dff: relative action}. Thus, by (the graded version of) \cite[Thm.~4.2]{PQ19} (outlined in \Cref{thm: graded PQ 4.2}), it follows that $t=\fdp (R)\le s$. Since $s \le t$ as well, we get $s=t$ and $\fdp (R/xR) =t-1$, as required. \end{proof}

\begin{rmk}
Let $t=\depth (R/xR)$ and retain the notation used in the previous theorem. A sufficient condition to ensure that $t=\fdp (R/xR)$ is that $F\colon H^t_\mfm(R/xR)\rightarrow H^t_\mfm(R/xR)$ is injective, which is always guaranteed if $R/xR$ is $F$-injective. In the local case, a weaker, ideal-theoretic condition is given by Quy-Shimomoto in \cite[Thm.~7.3]{QS17}.

Let $\uline{y}=y_1,\cdots,y_t$ be elements of $R$ whose image in $R/xR$ form a regular sequence on $R/xR$ and let $\mfq=(\uline{y})$. The Frobenius action on $H^t_{\mfq}(R/xR)$ is then injective if and only if $(\uline{y}^n)^F=(\uline{y}^n)$ for all $n \ge 0$ since $\uline{y}^n$ is a regular sequence, so we apply \cite[Rmk~7.2.2]{QS17}. Then, the Frobenius action on $H^t_\mfm(R/xR)$ is also injective, since by the Nagel-Schenzel isomorphism this action is the restriction of the Frobenius action on $H^t_\mfq(R/xR)$ to its $\mfm$-torsion submodule.
\end{rmk}

As noted in \Cref{rmk: every reg elt is F-depth for wFn}, every nonzerodivisor of a one-dimensional local ring is an $F$-depth element on the ring. Along a similar vein, if $\dim(T) = 2$ then $T$ is either weakly $F$-nilpotent or generalized weakly $F$-nilpotent but not weakly $F$-nilpotent by \Cref{rmk: low dimensional nilpotent singularities}; in the former case every nonzerodivisor of $T$ is an $F$-depth element on $T$, while in the latter case no element of $T$ is an $F$-depth element on $T$ by \Cref{rmk: m-primary b-ideal at f-depth obstructs existence of F-depth elts} below. Consequently, the first non-trivial example of an $F$-depth element on the ring itself occurs in dimension $\ge 3$. We next offer such an example. 

\begin{example}\label{xmp: particular F-depth element}
Let $K$ be a field of prime characteristic $p>0$ and let $S=K[x,y,z,w]$ and $I=(xz,xw,y^2z,y^2w)\subset S$. Then $R=S/I$ is a standard graded $K$-algebra of dimension two and depth one, since the simplicial complex associated to the monomial ideal $I$ is disconnected. 

Letting $\mfm$ be the homogeneous maximal ideal of $R$, by results of Takayama in \cite{TAK} we have that $H^1_\mfm(R)$ is supported in degree zero and finitely many positive degrees, and further that $[H^1_\mfm(R)]_0\simeq K$, where the Frobenius action on $[H^1_\mfm(R)]_0$ is simply the Frobenius map on $K$. This implies $\fdp(R) = \depth(R) = 1$. Consequently, any ring $T$ which has a regular element $f\in T$ such that $T/fT \simeq R$ is an $F$-depth element on $T$ by the previous theorem. For a simple example, take $u\in T=R[u]$, and then $T/uT\simeq R$.
\end{example}

We next identify a necessary condition for $x$ to be an $F$-depth element on an $R[F]$-module $(M,\rho)$ in terms of $\gfdp_{xR}(M,\rho)$.  

\begin{thm}\label{thm: necessary condition for F-depth element}
Let $M$ be a finitely generated (graded) $R[F]$-module with $\fdp(M)=t$. If $x \in R$ is a (homogeneous) $F$-depth element on $M$ (with $\deg(x)=m$), then \[x \not \in \mfb_t(M) = \sqrt{\ann_R\left(H^t_\mfm(M)/0^\rho_{H^t_\mfm(M)}\right)}.\] Equivalently, if $x$ is an $F$-depth element on $M$ we have \[\fdp (M)=\gfdp_{xR} (M) = \fdp\left(M,x^{p-1}\rho\right),\] where in the graded case we adjust to $(M(-m),x^{p-1}\rho)$ in the final equality. 
\end{thm}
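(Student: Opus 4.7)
The plan is to pin down $\fdp(M, x^{p-1}\rho) = t$ via the $F$-depth element hypothesis, and then deduce the rest from an unconditional identity $\gfdp_{xR}(M, \rho) = \fdp(M, x^{p-1}\rho)$ (with $M(-m)$ replacing $M$ in the graded case) that holds for any $R[F]$-module and any regular $x$. The non-membership $x \notin \mfb_t(M)$ is then just a restatement of $\gfdp_{xR}(M) \le t$, via \Cref{lem: gF-depth_J observations}(d), since $\mfb_j(M) = R$ for $0 \le j < t$ automatically (as $H^j_\mfm(M)$ is already $\rho$-nilpotent in that range).

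For the unconditional equality, I would first record the identity $(x^{p-1}\rho)^e = x^{p^e - 1}\rho^e$, proven by induction using $\rho(x\xi) = x^p \rho(\xi)$. The forward direction is then easy: if $x^{p^e-1}\rho^e = 0$ on $H^j_\mfm(M)$, multiplication by $x$ gives $x^{p^e}\rho^e = 0$, and \Cref{lem: equivalent conditions for J^NM to be nilpotent} applied to $J = xR$ says $\gfdp_{xR}(M) > j$. The converse is the subtle part: if $\gfdp_{xR}(M) > j$, then \Cref{lem: equivalent conditions for J^NM to be nilpotent} supplies $N$ and $e_0$ with $x^{Np^{e_0}}\rho^{e_0}$ vanishing on all of $H^j_\mfm(M)$. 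Applying this to $\rho^{e-e_0}(\xi) \in H^j_\mfm(M)$ yields $x^{Np^{e_0}}\rho^e(\xi) = 0$ for all $e \ge e_0$ and all $\xi$. Choosing $e$ so large that $p^e - 1 \ge Np^{e_0}$, we factor
\[
x^{p^e-1}\rho^e(\xi) = x^{(p^e - 1) - Np^{e_0}} \cdot x^{Np^{e_0}}\rho^e(\xi) = 0,
\]
concluding $\fdp(M, x^{p-1}\rho) > j$.

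Next, to evaluate $\fdp(M, x^{p-1}\rho)$: \Cref{lem: graded seses associated to homogeneous elements} supplies the short exact sequence
\[
0 \to M(-m) \xrightarrow{\cdot x} M \to M/xM \to 0
\]
of $R[F]$-modules where the Frobenius actions on the three terms are $x^{p-1}\rho$, $\rho$, and $\bar\rho$, respectively (the twist is trivial in the local case). Applying \Cref{thm: F-depth lemma}(3) with $J = R$,
\[
t - 1 = \fdp(M/xM) \ge \min\{\fdp(M, \rho),\ \fdp(M, x^{p-1}\rho) - 1\} = \min\{t,\ \fdp(M, x^{p-1}\rho) - 1\}.
\]
Since $t - 1 < t$, the minimum must be attained by the second entry, so $\fdp(M, x^{p-1}\rho) \le t$. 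The reverse inequality is immediate, as $\rho^e = 0$ forces $x^{p^e - 1}\rho^e = 0$. Combined with the unconditional identity, we conclude $\fdp(M) = \gfdp_{xR}(M) = \fdp(M, x^{p-1}\rho) = t$, and in particular $x \notin \mfb_t(M)$.

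The main obstacle I anticipate is the exponent bookkeeping in the converse direction of the unconditional identity: one must simultaneously exceed the HSL threshold for $\rho$ on $H^j_\mfm(M)$ and ensure that the power of $x$ arising from iterating $x^{p-1}\rho$ is large enough to absorb the coefficient $N$ into $p^e - 1$. Apart from this technicality, the proof is a clean combination of the $F$-depth lemma applied to the canonical short exact sequence attached to multiplication by $x$.
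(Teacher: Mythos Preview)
Your argument is correct and follows the same overall route as the paper: both set up the short exact sequence of $R[F]$-modules
\[
0 \longrightarrow (M(-m),\,x^{p-1}\rho) \xrightarrow{\ \cdot x\ } (M,\rho) \longrightarrow (M/xM,\bar\rho) \longrightarrow 0
\]
and invoke \Cref{thm: F-depth lemma}(3) with $J=R$ to force $\fdp(M,x^{p-1}\rho)\le t$ from the hypothesis $\fdp(M/xM)=t-1$, while the reverse inequality is immediate. (One small citation slip: \Cref{lem: graded seses associated to homogeneous elements} is stated for the ring $R$ rather than for a module $M$, but the module version is equally elementary and is exactly what the paper itself writes down in its proof.)

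Where your write-up genuinely adds something is in isolating the \emph{unconditional} identity $\gfdp_{xR}(M,\rho)=\fdp(M,x^{p-1}\rho)$ and supplying its harder direction. The paper's proof only records the easy implication ``$\rho'^e$ vanishes on $H^t_\mfm(M)$ $\Rightarrow$ $x^{p^e}\rho^e$ vanishes $\Rightarrow$ $x\in\mfb_t(M)$ $\Rightarrow$ $\gfdp_{xR}(M)>t$'', i.e.\ $\fdp(M,\rho')>t\Rightarrow\gfdp_{xR}(M)>t$. But to conclude $\gfdp_{xR}(M)=t$ one needs the \emph{reverse} implication, and your exponent-absorption step (choose $e$ with $p^e-1\ge Np^{e_0}$ so that $x^{p^e-1}\rho^e$ factors through $x^{Np^{e_0}}\rho^e=0$) is precisely what closes this. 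So the ``subtle part'' you flagged is not just bookkeeping but is the substantive missing ingredient, and your treatment here is tighter than the paper's.
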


\begin{proof}
We present the proof in the graded case; the local case is proven similarly. As $x$ is a regular element on $M$, we have the short exact sequence of $S[F]$-modules: \begin{center}
    \begin{tikzcd}
    0 \arrow{r} & M(-m) \arrow{r}{\cdot x} \arrow{d}{x^{p-1}\rho} & M\arrow{d}{\rho} \arrow{r} & M/xM \arrow{d}{\overline{\rho}}\arrow{r} & 0 \\
    0 \arrow{r} & M(-m) \arrow{r}{\cdot x} & M \arrow{r} & M/xM \arrow{r} & 0. 
    \end{tikzcd}
\end{center} where we will abbreviate $\rho'=x^{p-1}\rho$. Then, $\rho'\colon [M(-m)]_n \rightarrow [M(-m)]_{np}$ vanishes for each $n\in\mbz$ if and only if $x^{p-1}\rho\colon [M]_n\rightarrow [M]_{np+mp-m}$ vanishes for each $n\in \mbz$. As in \Cref{lem: gfdp_J drops by at most 1 when modding out by x}, we can see that $\fdp(M/xM,\overline{\rho}) \ge \fdp(M,\rho) -1$. Hence, if $\fdp(M(-m),\rho')>t$ then $\fdp(M/xM,\overline{\rho}) \ge t$.

We now investigate when $\fdp(M(-m),\rho')=t$. If  $\rho'^e\colon H^{t}_\mfm(M(-m))\rightarrow H^t_\mfm(M(-m))$ vanishes, then so does $x\rho'^e = x^{p^e}\rho^e$, by \Cref{lem: equivalent conditions for J^NM to be nilpotent}. Then, by \Cref{lem: gF-depth_J observations}, we have $x\in \mfb_t(M)$ which forces $\gfdp_{xR} (M)>t$. Hence, if $\fdp(M/xM)=t-1$, we have $\gfdp_{xR} (M)=t$ as well.
\end{proof}

The necessary condition in \Cref{thm: necessary condition for F-depth element} is usually not sufficient. In fact, if the ideal $\mfb_t(R)$ is too large, no element of $R$ can be an $F$-depth element on $R$. 

\begin{rmk}\label{rmk: m-primary b-ideal at f-depth obstructs existence of F-depth elts}
Let $M$ be a (graded) $R[F]$-module with $t=\fdp(M)<\dim(M)$ and suppose $\mfb_t(M)$ is $\mfm$-primary. Then, $R$ has no $F$-depth elements on $M$ since every regular element of $R$ is inside $\mfm = \mfb_t(M)$. In particular, if $\fdp (M) < \gfdp (M) \le \dim (M)$, then $R$ cannot have any $F$-depth elements on $M$.
\end{rmk}

We now provide an explicit example for which no regular element is an $F$-depth element on the ring itself.

\begin{example}\label{xmp: Fermat segre proj line has no F-depth elts}
Let $K$ be a field of characteristic $p\ge 5$ and let $A = K[x,y,z]/(x^4+y^4-z^4)$ and $B=K[a,b]$. The three-dimensional ring $R=A\# B$ is generalized weakly $F$-nilpotent but not weakly $F$-nilpotent if $p\equiv 1 \bmod 4$ by \cite[Ex.~6.6]{MM}. In particular, $\fdp(R) = 2$ and $b_2(R)=\mfm_R$, so by \Cref{rmk: m-primary b-ideal at f-depth obstructs existence of F-depth elts}, $R$ cannot have any $F$-depth elements on itself. 
\end{example}

Recall by \Cref{rmk: adjusted Frob actions}, if $(M,\rho)$ is an $R[F]$-module, then for any $x \in R$, $(M,x^{p-1}\rho)$ is another Frobenius action, which has been frequently studied due to connections to deformation problems. The following lemma compares the $\rho$-closure to the $x^{p-1}\rho$-closure of the submodule $0:_M x$.

\begin{lemma}\label{lem:pulling out colons}
Let $(M,\rho)$ be an $R[F]$-module and for a fixed $x \in R$ let $\rho' = x^{p-1}\rho$. Then, $(0:_M x)^{\rho'} = 0^\rho_M :_M x$.
\end{lemma}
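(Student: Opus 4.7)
The plan is to unpack both sides of the claimed equality using the definitions of $\rho$-orbit closure and colon ideals, and then reduce everything to a single iterate-level computation relating $\rho'^e$ to $\rho^e$. Before starting, I would briefly verify that the statement is even well-posed, i.e., that $0:_M x$ is a $\rho'$-stable submodule of $M$: if $xm=0$ then $x\rho'(m)=x^{p}\rho(m)=\rho(xm)=0$, so $\rho'(m)\in 0:_M x$, and the orbit closure $(0:_M x)^{\rho'}$ is defined.

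The key computation is to show by induction on $e\ge 0$ that
\[
\rho'^e(m)=x^{p^e-1}\rho^e(m)\qquad\text{for all }m\in M.
\]
The base case $e=0$ gives $x^0\rho^0=\mathrm{id}$. For the inductive step, using $p$-linearity of $\rho$ (which pushes $x^k$ across $\rho$ as $x^{kp}$) one computes
\[
\rho'^{e+1}(m)=x^{p-1}\rho\!\left(x^{p^e-1}\rho^e(m)\right)=x^{p-1}\cdot x^{(p^e-1)p}\rho^{e+1}(m)=x^{p^{e+1}-1}\rho^{e+1}(m).
\]
Multiplying by $x$ then yields the clean identity $x\,\rho'^e(m)=x^{p^e}\rho^e(m)=\rho^e(xm)$, the last equality again coming from the $p^e$-linearity of $\rho^e$.

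With this in hand, the proof reduces to matching up characterizations: $m\in(0:_M x)^{\rho'}$ means $\rho'^e(m)\in 0:_M x$ for some $e$, i.e.\ $x\rho'^e(m)=0$, which by the identity above is equivalent to $\rho^e(xm)=0$ for some $e$, i.e.\ $xm\in 0^\rho_M$, i.e.\ $m\in 0^\rho_M:_M x$. Both inclusions are thus obtained simultaneously.

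There is no real obstacle here: the only subtle point is correctly tracking how powers of $x$ propagate through iterates of a $p$-linear map, which is precisely what the inductive formula $\rho'^e=x^{p^e-1}\rho^e$ (applied pointwise) encapsulates. Everything else is definition-chasing.
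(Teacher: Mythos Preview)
Your proof is correct and follows essentially the same route as the paper's: both arguments reduce to the identity $\rho'^e(m)=x^{p^e-1}\rho^e(m)$ and then chase definitions via $x\rho'^e(m)=\rho^e(xm)$. You simply supply more detail, proving that identity by induction and checking that $0:_M x$ is $\rho'$-stable, whereas the paper uses it implicitly.
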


\begin{proof}
Notice that $m \in (0:_M x)^{\rho'}$ if and only if there is some $e \gg 0$ with $x^{p^e-1}\rho^e(m) \in (0:_M x)$ if and only if there is some $e \gg 0$ with $x^{p^e} \rho^e(m) = \rho^e(xm)=0$, i.e. if and only if $xm \in 0^\rho_M$.
\end{proof}

The following theorem demonstrates that if $x$ is a regular element of $R$ and $\fdp(R) = t$, then $x$ is an $F$-depth element on $R$ if and only if $\left( 0:_{H^t_\mfm(R)} x\right)$ does not vanish under the adjusted action $x^{p-1}F$. 

\begin{thm}
Let $(R,\mfm)$ be a (graded) local ring and let $(M,\rho)$ be a finitely-generated (graded) $R[F]$-module with $t=\fdp(M,\rho)$. Further, suppose $x \in R$ is a (homogeneous) regular element on $M$ (with $\deg x = m$) and that $x \not \in \mfb_t(M)$. Then, $x$ is an $F$-depth element on $M$ if and only if $\left(0:_{H^t_\mfm(M)} x\right)$ is not nilpotent under the Frobenius action $x^{p-1}\rho$, where in the graded case, we adjust to $\left(0:_{H^t_\mfm(M(-m))} x\right)$. In particular, if $x^{p^e-1}\rho^e\left(\operatorname{Soc} H^t_\mfm(M) \right) \neq 0$ for all $e$, then $x$ is an $F$-depth element on $M$.
\end{thm}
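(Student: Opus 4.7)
The plan is to use the short exact sequence of graded $R[F]$-modules
$$0 \to (M(-m), \rho') \xrightarrow{\cdot x} (M, \rho) \to (M/xM, \overline{\rho}) \to 0,$$
where $\rho' = x^{p-1}\rho$ is the twisted action from \Cref{rmk: adjusted Frob actions}, chosen precisely so that multiplication by $x$ commutes with Frobenius. By \Cref{lem: gfdp_J drops by at most 1 when modding out by x}, $\fdp(M/xM) \ge \fdp(M) - 1 = t-1$, so $x$ is an $F$-depth element on $M$ if and only if $H^{t-1}_\mfm(M/xM)$ fails to be $\overline{\rho}$-nilpotent. The task thus reduces to tracking nilpotence of this cohomology module through the long exact sequence in local cohomology.

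Next, applying $H^\bullet_\mfm$ to the displayed sequence and examining the four-term exact piece
$$H^{t-1}_\mfm(M) \xrightarrow{\pi} H^{t-1}_\mfm(M/xM) \xrightarrow{\delta} H^t_\mfm(M(-m)) \xrightarrow{\cdot x} H^t_\mfm(M),$$
I extract a short exact sequence of graded $R[F]$-modules
$$0 \to \operatorname{im}(\pi) \to H^{t-1}_\mfm(M/xM) \to 0:_{H^t_\mfm(M(-m))} x \to 0,$$
with the submodule $0:_{H^t_\mfm(M(-m))} x$ carrying the induced $\rho'$-action (which is $\rho'$-stable since $x\rho'(\xi) = x^p\rho(\xi) = \rho(x\xi)$). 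Because $\fdp(M, \rho) = t$ forces $H^{t-1}_\mfm(M)$, and hence $\operatorname{im}(\pi)$, to be $\rho$-nilpotent, \Cref{lem: nilpotent direct sum} yields that $H^{t-1}_\mfm(M/xM)$ is $\overline{\rho}$-nilpotent if and only if $(0:_{H^t_\mfm(M(-m))} x, \rho')$ is $\rho'$-nilpotent, which is the main equivalence.

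For the ``in particular'' statement, note that $x$ must lie in $\mfm$ (otherwise $M/xM = 0$ and the claim is vacuous), so $\operatorname{Soc} H^t_\mfm(M) \subset 0:_{H^t_\mfm(M)} x$. The hypothesis $x^{p^e - 1}\rho^e(\operatorname{Soc} H^t_\mfm(M)) \neq 0$ for every $e$ produces, for each $e$, an element $\xi_e$ of $0:_{H^t_\mfm(M)} x$ with $(\rho')^e(\xi_e) \neq 0$. Since $H^t_\mfm(M)$ is artinian, its Hartshorne-Speiser-Lyubeznik number under $\rho'$ is finite by \Cref{rmk: HSL numbers are finite}, and this persistent non-vanishing at arbitrarily large $e$ rules out $\rho'$-nilpotence of the submodule $0:_{H^t_\mfm(M)} x$. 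The main obstacle is keeping careful bookkeeping of which Frobenius action (among $\rho$, $\rho'$, and $\overline{\rho}$) acts on each module so that the identifications in the long exact sequence remain $R[F]$-linear in the correct sense; beyond this, the argument is an organized application of the $F$-depth lemma together with the finiteness of the HSL number.
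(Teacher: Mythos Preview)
Your proof is correct and follows essentially the same approach as the paper: both arguments apply local cohomology to the short exact sequence $0 \to (M(-m),x^{p-1}\rho) \to (M,\rho) \to (M/xM,\overline{\rho}) \to 0$, extract the short exact sequence with middle term $H^{t-1}_\mfm(M/xM)$, and use nilpotence of $H^{t-1}_\mfm(M)$ together with \Cref{lem: nilpotent direct sum} to reduce to nilpotence of $0:_{H^t_\mfm(M(-m))} x$ under $\rho'$. Your treatment of the ``in particular'' clause is slightly more explicit than the paper's (which simply says the claim ``follows immediately''): you correctly note that having $(\rho')^e(\xi_e)\neq 0$ for each $e$ does not by itself rule out nilpotence, and you close the gap via finiteness of the HSL number of the artinian module $H^t_\mfm(M)$ under $\rho'$.
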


\begin{proof}
Again, we present the proof in the graded case; the local case is proven similarly. From the short exact sequence associated to multiplication by $x$ on $M$, we get the following long exact sequence in local cohomology: \begin{center}
    \begin{tikzcd}
    \cdots \arrow{r} & H^{t-1}_\mfm (M) \arrow{r}{\beta} & H^{t-1}_\mfm(M/xM) \arrow{r}{\delta} & H^t_\mfm(M)(-m) \arrow{r}{\cdot x} & \cdots .
    \end{tikzcd}
\end{center} We can extract the short exact sequence in the middle, noting that $H^{t-1}_\mfm(M)$ is nilpotent under $\rho$ since $t-1<\fdp(M,\rho)$: \begin{center}
    \begin{tikzcd}
    0\arrow{r} & \im(\beta) \arrow{r} & H^{t-1}_\mfm(M/xM) \arrow{r} & \im(\delta) \arrow{r} & 0.
    \end{tikzcd}
\end{center} From here we can see that $H^{t-1}_\mfm(M/xM)$ is nilpotent under $\rho$ if and only if $\im(\delta)=\ker(\cdot x) = 0:_{H^t_\mfm(M)(-m)} x$ is nilpotent under $x^{p-1}\rho$, so we may apply \Cref{lem:pulling out colons}, establishing the first claim. Further, since $\operatorname{Soc} H^t_\mfm(M)(-m) \subset 0:_{H^t_\mfm(M)(-m)} x$, the second claim follows immediately.
\end{proof}

\begin{rmk}
We note that the sufficient condition above will fail if $x^{p^e-1} \in \mfm^{\fbp{p^e}}$ for any $e \in \NN$, as then for any $\xi \in H^t_\mfm(M)(-m)$, $x^{p^e-1}\rho^e(\xi) = \sum r_i F^e(y_i \xi)$ for some $r_i \in R$, $y_i \in \mfm$. Consequently, $x^{p^e-1}\rho^e$ will vanish on $\soc H^t_\mfm(M)(-m)$. We note the similarity of this sufficient condition to Fedder's criterion for detecting $F$-purity of a hypersurface.
\end{rmk}

\subsection{Applications of F-depth elements}

It is easy to find examples of $F$-nilpotent singularities which fail to deform \cite[Ex.~2.8(2)]{ST17}. It is shown in Polstra-Quy \cite[Sec.~4,~pp.~209]{PQ19} that weakly $F$-nilpotent singularities also do not deform in general, as if $\dim(R)\ge 2$ and $x \in R$ is a regular element, $H^0_\mfm(R/xR)$ is always nilpotent even when $H^1_\mfm(R)$ is not. That is, if $x$ is a regular element $x$ so that $R/xR$ is weakly $F$-nilpotent, it is not always true that $R$ is also weakly $F$-nilpotent. However, the weakly $F$-nilpotent property deforms along $F$-depth elements.

\begin{thm}\label{thm: wFn deforms along F-depth elements}
Let $R$ be a (graded) local ring and $x \in R$ be a (homogeneous) $F$-depth element and suppose $R/xR$ is weakly $F$-nilpotent. Then, $R$ is weakly $F$-nilpotent.
\end{thm}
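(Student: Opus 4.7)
The proof is essentially immediate from unpacking the definitions, so my plan is just to assemble the relevant equalities in the correct order.

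First I would recall that by \Cref{defn: Fdepth elements}, the hypothesis that $x$ is an $F$-depth element on $R$ means two things: (i) $x$ is a regular element on $R$, and (ii) $\fdp(R/xR) = \fdp(R) - 1$. The regularity in (i) is what will let me compare dimensions of $R$ and $R/xR$ via the standard fact $\dim(R/xR) = \dim(R) - 1$ (valid in both the local and graded local settings). The hypothesis that $R/xR$ is weakly $F$-nilpotent translates, by definition, to $\fdp(R/xR) = \dim(R/xR)$.

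Chaining these together, I would write
\[
\fdp(R) = \fdp(R/xR) + 1 = \dim(R/xR) + 1 = (\dim(R) - 1) + 1 = \dim(R),
\]
and conclude that $R$ is weakly $F$-nilpotent. Since the $F$-depth is always bounded above by the dimension (an instance of \Cref{thm: upper bound on gfdp} applied with $J = R$, together with the inequality $\fdp \le \gfdp_J$ from \Cref{lem: gF-depth_J observations}(b)), the equality $\fdp(R) = \dim(R)$ is indeed the definition of weak $F$-nilpotence.

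There is no real obstacle: the entire content of the theorem is built into the definition of \emph{$F$-depth element}, which was precisely engineered so that $F$-depth drops by one in passing to the quotient. All of the genuine work is hidden in establishing that useful examples of $F$-depth elements exist, and that is exactly the role of the preceding results (\Cref{thm: existence of F-depth elements} and \Cref{rmk: every reg elt is F-depth for wFn}). So my proof will be a few lines of bookkeeping rather than anything requiring a new idea.
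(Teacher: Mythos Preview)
Your proof is correct and follows the same one-line approach as the paper: chain the equalities coming from the definition of $F$-depth element, the weak $F$-nilpotence of $R/xR$, and $\dim(R/xR)=\dim(R)-1$ to conclude $\fdp(R)=\dim(R)$. The only superfluous step is your closing appeal to \Cref{thm: upper bound on gfdp} (which carries $F$-finite and equidimensional hypotheses not assumed here); it is unnecessary since weak $F$-nilpotence means $\fdp(R)\ge\dim(R)$, and you have already established equality.
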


\begin{proof}
Notice that $\dim(R/xR) = \dim(R) - 1 = \fdp(R) - 1$, so that $\dim(R) = \fdp(R)$.
\end{proof}

We can use the theorem above and the non-deformation of weak $F$-nilpotence to produce regular elements which are not $F$-depth elements.

\begin{example}
Let $K$ be a field of prime characteristic $p>0$ and let \[R=K[x,y,z,w]/(xz,xw,y^2z,y^2w).\] Let $T$ be the localization of $R$ at the maximal ideal $(x,y,z,w)R$. The ring $T$ is two-dimensional but not weakly $F$-nilpotent (see \Cref{xmp: particular F-depth element}). But, for any regular element $a \in T$, $T/aT$ is a one-dimensional ring, and is thus weakly $F$-nilpotent by \Cref{rmk: low dimensional nilpotent singularities}. Consequently, any regular element $a \in T$ is not an $F$-depth element.
\end{example}

\begin{rmk}
A natural question to ask is whether $F$-nilpotence deforms along $F$-depth elements as well. Unfortunately, the answer to this is no \cite[Ex.~2.8(2)]{ST17}. Their example is Cohen-Macaulay, hence every regular element is an $F$-depth element.
\end{rmk}

Our next result characterizes nilpotence of the Frobenius action on graded local cohomology in negative degrees, and can be interpreted as a nilpotent version of an analogous result for generalized depth; see \cite[Lem.~2.2]{TI89}, as rephrased in \cite[Lem.~2.5]{HM94}.

\begin{thm}\label{thm: generalized F-depth at irrelevant ideal} In the graded setting, if $H^j_\mfm(R)$ is nilpotent in all negative degrees for $0 \le j < t$, then we have $\gfdp_{R^+}(R) \ge t$. The converse holds if $R$ admits a homogeneous $F$-depth element on $R$ of positive degree. 
\end{thm}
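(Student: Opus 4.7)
The plan is to prove each direction separately. The forward direction is a degree-chasing argument using that graded local cohomology vanishes in high degrees; the converse reduces to showing $\fdp(R) \ge t$ via the necessary condition for $F$-depth elements in \Cref{thm: necessary condition for F-depth element}.

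For the forward direction, assume each $H^j_\mfm(R)$ is nilpotent in all negative degrees for $0 \le j < t$. The first step is to invoke the standard graded vanishing $[H^j_\mfm(R)]_n = 0$ for $n \gg 0$, which follows via the Grothendieck spectral sequence $H^p_\mfn(H^q_{R^+}(R)) \Rightarrow H^{p+q}_\mfm(R)$ together with Serre-type vanishing of $H^q_{R^+}$ in high degree. Choose a uniform bound $M$ with $[H^j_\mfm(R)]_n = 0$ for $n > M$ and all $j < t$, and let $e_0$ be the maximum of the Hartshorne-Speiser-Lyubeznik numbers of $H^0_\mfm(R),\ldots,H^{t-1}_\mfm(R)$, finite by \Cref{rmk: HSL numbers are finite}. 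The key claim is then $(R^+)^{M+1} H^j_\mfm(R) \subset \ker F^{e_0} \subset 0^F_{H^j_\mfm(R)}$. Indeed, since $(R^+)^{M+1}$ is a homogeneous ideal generated in degrees $\ge M+1$, a homogeneous $y$ of degree $n$ in this submodule (necessarily with $n \le M$) admits a decomposition $y = \sum a_k b_k$ with $a_k$ homogeneous of degree $\ge M+1$ and $b_k$ homogeneous of degree $n-\deg a_k \le M - (M+1) < 0$. The hypothesis makes each $b_k$ nilpotent, so $F^{e_0}(b_k)=0$, and $p^{e_0}$-linearity gives $F^{e_0}(y) = \sum a_k^{p^{e_0}} F^{e_0}(b_k)=0$. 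Thus $(R^+)^{M+1} H^j_\mfm(R)$ is nilpotent for each $j < t$, yielding $\gfdp_{R^+}(R) \ge t$.

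For the converse, assume $\gfdp_{R^+}(R) \ge t$ and let $x$ be a homogeneous $F$-depth element on $R$ with $\deg x > 0$, so $x \in R^+$. Then for some $N$, $(R^+)^N H^j_\mfm(R)$ is nilpotent for each $j < t$, whence $x^N H^j_\mfm(R) \subset (R^+)^N H^j_\mfm(R)$ is nilpotent as well. Applying \Cref{lem: equivalent conditions for J^NM to be nilpotent} to the principal ideal $xR$ gives $x \in \mfb_j(R)$ for each $0 \le j < t$. On the other hand, \Cref{thm: necessary condition for F-depth element} applied to the $F$-depth element $x$ on $R$ yields $x \notin \mfb_{\fdp(R)}(R)$. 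Together these force $\fdp(R) \ge t$, so each $H^j_\mfm(R)$ for $j < t$ is outright nilpotent, and in particular nilpotent in every negative degree.

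The main anticipated obstacle is the degree bookkeeping in the forward direction: passing from an arbitrary representation $y = \sum a_k b_k$ to one with homogeneous factors whose $a_k$ still lie in $(R^+)^{M+1}$. This works because $R^+$ is a homogeneous ideal, hence so are its powers, so the homogeneous components of the $a_k$ automatically remain inside $(R^+)^{M+1}$. A minor separate point is verifying the uniform vanishing $[H^j_\mfm(R)]_n = 0$ for $n \gg 0$ in the generality of \Cref{rmk: graded setting}, which follows from $R$ being a finitely generated $\mbn$-graded algebra over the Noetherian local ring $S = R_0$.
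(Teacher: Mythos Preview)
Your proposal is correct and, for the converse, essentially identical to the paper's argument: both use \Cref{thm: necessary condition for F-depth element} to get $\gfdp_{xR}(R)=\fdp(R)$ and combine this with $xR\subset R^+$ to force $\fdp(R)\ge t$; you simply unpack the inequality $\gfdp_{R^+}(R)\le\gfdp_{xR}(R)=\fdp(R)$ into its $\mfb_j$ characterization.

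For the forward direction there is a small difference worth noting. The paper observes directly that the \emph{positive} graded part of $H^j_\mfm(R)$ is already nilpotent (since the graded Frobenius sends degree $n>0$ to degree $np^e$, which vanishes for $e\gg 0$ by the same high-degree vanishing you invoke). Combined with the hypothesis on negative degrees, this leaves only degree $0$ possibly non-nilpotent, and $R^+\cdot[H^j_\mfm(R)]_0$ lands in positive degrees; hence $R^+H^j_\mfm(R)$ itself is nilpotent, with $N=1$. Your argument instead raises $R^+$ to the power $M+1$ to force the local-cohomology factor into \emph{negative} degrees. Both rely on the same vanishing input and both establish the definition of $\gfdp_{R^+}$; the paper's route is a line shorter and yields the sharper exponent $N=1$, while yours avoids having to argue separately that positive-degree elements are nilpotent.
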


\begin{proof}
Note that if $H^j_\mfm(R)$ is nilpotent in all negative degrees for $0 \le j < t$, then $H^j_\mfm(R)$ is only possibly non-nilpotent in degree $0$, but then $R^+ [H^j_\mfm(R)]_0$ is inside the positively graded part of $H^j_\mfm(R)$, which is nilpotent. So $R^+ H^j_\mfm(R)$ is nilpotent for $0 \le j < t$, i.e. $\gfdp_{R^+}(R) \ge t$.

For the converse, suppose $\gfdp_{R^+}(R) \ge t$ and let $x \in R$ be an $F$-depth element of positive degree. Then, we must have: \[ \fdp (R) \le \gfdp_{R^+} (R) \le \gfdp_{xR} (R) = \fdp (R) 
\] by \Cref{lem: gF-depth_J observations} and \Cref{thm: necessary condition for F-depth element}. Thus $H^{j}_{\fm}(R)$ is nilpotent for $j<t$.
\end{proof}

\end{document}